\documentclass[]{article}
\usepackage[a4paper, margin=3cm]{geometry}
\usepackage{amsmath}
\usepackage{amssymb}
\usepackage{amsthm}
\theoremstyle{plain}
\newtheorem{theorem}{Theorem}[section]     
\newtheorem{lemma}[theorem]{Lemma}          
\newtheorem{proposition}[theorem]{Proposition}
\newtheorem{corollary}[theorem]{Corollary}
\newtheorem{assumption}[theorem]{Assumption}
\newtheorem{definition}[theorem]{Definition}
\newtheorem{example}[theorem]{Example}
\newtheorem{remark}[theorem]{Remark}
\usepackage{epstopdf}
\usepackage{makecell,multirow,diagbox,rotating,tabularx}
\usepackage{color}
\usepackage{booktabs}
\usepackage{algorithm}
\usepackage{algorithmic}
\usepackage{url}
\usepackage{hyperref} 
\usepackage{authblk} 
\usepackage{cleveref}

\newcommand{\email}[1]{\href{mailto:#1}{#1}}
\renewcommand{\Re}{{\mathbb{R}}}
\newenvironment{keywords}{\paragraph{Keywords:}}{}

\title{Alternating Gradient-Type Algorithm for \\ Bilevel Optimization with Inexact Lower-Level Solutions via Moreau Envelope-based Reformulation\thanks{This work was supported by the National Key R\&D Program of China (2023YFA1011400), the National Natural Science Foundation of China (12326605, 12222106, 12501429) and the  Shenzhen Fundamental Research Program (20250530150024003).}.
}

\author{
	Xiaoning Bai\thanks{Department of Mathematics, Southern University of Science and Technology, Shenzhen 518055, People's Republic of China. (\email{12331002@mail.sustech.edu.cn}).} \quad
	Shangzhi Zeng\thanks{National Center for Applied Mathematics Shenzhen, and Department of Mathematics, Southern University of Science and Technology, Shenzhen 518055, People's Republic of China. (\email{zengsz@sustech.edu.cn}).} \quad
	Jin Zhang\thanks{Corresponding author. Department of Mathematics, and National Center for Applied Mathematics Shenzhen, Southern University of
		Science and Technology, Shenzhen 518055, People's Republic of China. (\email{zhangj9@sustech.edu.cn}).} \quad
	Lezhi Zhang\thanks{Department of Computer Science, City University of Hong Kong, Hong Kong SAR, People's Republic of China, and Department of Mathematics, Southern University of Science and Technology, Shenzhen 518055, People's Republic of China. (\email{lezhzhang2-c@my.cityu.hk}).}
}

\date{}

\begin{document}
	
	\maketitle
	
\begin{abstract}
	In this paper, we study a class of bilevel optimization problems where the lower-level problem is a convex composite optimization model, which arises in various applications, including bilevel hyperparameter selection for regularized regression models. To solve these problems, we propose an Alternating Gradient-type algorithm with Inexact Lower-level Solutions (AGILS) based on a Moreau envelope-based reformulation of the bilevel optimization problem. The proposed algorithm does not require exact solutions of the lower-level problem at each iteration, improving computational efficiency. We prove the convergence of AGILS to stationary points and, under the Kurdyka-{\L}ojasiewicz (KL) property, establish its sequential convergence. Numerical experiments, including a toy example and a bilevel hyperparameter selection problem for the sparse group Lasso model, demonstrate the effectiveness of the proposed AGILS.
	
\end{abstract}

\begin{keywords}
	Bilevel optimization, hyperparameter selection, inexact, alternating gradient descent, convergence,  sparse group Lasso
	
\end{keywords}

\section{Introduction}
Bilevel optimization problems are a class of hierarchical optimization problems where the feasible region is implicitly defined by the solution set of a lower-level optimization problem. In this work, we focus on a bilevel optimization problem which has a convex composite lower-level problem,
\begin{equation}\label{eq1}
	\begin{aligned}
		\min\limits_{x \in X, y \in Y} &\ F(x,y)\\
		{\rm s.t.}\quad &\ y \in S(x),
	\end{aligned}
\end{equation}
where $S(x)$ denotes the set of optimal solutions to the lower-level problem:
\begin{equation}\label{LL}
	\min\limits_{y \in Y}\ \varphi(x,y) := f(x,y) + g(x,y).
\end{equation}
Here, $X \subset \Re^n$ and $Y \subset \Re^m$ are closed convex sets, $f(x, y) : \Re^n \times \Re^m \rightarrow \Re$ is smooth and convex with respect to $y$ on $Y$ for any $x \in X$, and $g(x, y) : \Re^n \times \Re^m \rightarrow \Re$ is convex but potentially nonsmooth with respect to $y$ on $Y$ for any $x \in X$. Additionally, we assume that $g(x, \cdot) : \Re^m \rightarrow \Re$ is proximal-friendly, enabling efficient evaluation of its proximal operator.

Bilevel optimization originates from economic game theory, particularly the Stackelberg game \cite{von2010market}, and has since found diverse applications in economics \cite{cecchini2013solving, garen1994executive, 29}, transportation \cite{constantin1995optimizing,  migdalas1995bilevel}, and machine learning \cite{bennett2008bilevel, franceschi2018bilevel, kunapuli2008classification, liu2021investigating,zhang2024introduction}. 
Notably, many hyperparameter optimization problems can be modeled as bilevel optimization problems with a convex composite lower-level problem as in \eqref{LL}. Bilevel optimization has been extensively studied, with monographs \cite{colson2007overview, dempe2002foundations,dempe2020bilevel,dempe2013bilevel} providing a comprehensive overview of its methodologies and applications.
For bilevel problems where all defining functions are smooth, the first-order optimality conditions are used to reformulate the problem as a mathematical program with equilibrium constraints (MPEC); see, e.g., \cite{allende2013solving, bard1998practical, luo1996mathematical,outrata1998nonsmooth}. However, this approach often encounters difficulties when applied to larger-scale problems or those involving non-smooth lower-level functions, such as those with regularization terms. Smoothing techniques have also been proposed to address nonsmooth issues in specific applications, such as hyperparameter optimization \cite{alcantara2025unified,okuno2021lp}.
Another widely used approach for reformulating bilevel optimization problems is based on the value function \cite{outrata1990numerical,ye1995optimality}. Specifically, the bilevel problem is reformulated as the following equivalent problem, 
\begin{equation}\label{reformulate_vlue}
	({\rm VP}) \quad\quad		\min\limits_{x\in X,y\in Y}\ F(x,y) \quad  
	{\rm s.t.}\quad\ \varphi (x,y)-v (x)\le 0,
\end{equation}
where $v(x):= \inf_{\theta\in Y} \varphi(x,\theta)$ is the value function of the lower-level problem. Under the partial calmness condition, Newton-type methods have been developed for this reformulation \cite{fischer2022semismooth,fliege2021gauss,jolaoso2025fresh}. For fully convex lower-level problems, (VP) can be treated as a difference-of-convex (DC) program, as studied in \cite{ye2023}, which led to applications in bilevel hyperparameter tuning problems \cite{gao2022value}. Subsequently, \cite{gao2024} extended these ideas to settings where the fully convex assumption is relaxed, introducing a Moreau envelope-based reformulation and corresponding algorithms. 

In recent years, bilevel optimization has garnered significant attention within the machine learning community, driving the development of various methods and algorithms. Among these, gradient-based approaches have emerged as particularly effective due to their simplicity and efficiency \cite{franceschi2018bilevel, liu2021investigating,zhang2024introduction}.
Many of these methods are built upon the computation of hypergradient or its approximation, especially in cases where the lower-level problem is smooth and strongly convex. The strong convexity of the lower-level problem guarantees the uniqueness of its solution, facilitating hypergradient computation \cite{bertrand2020implicit, chen2021closing, feng2018gradient, grazzi2020iteration, hong2023two, ji2021bilevel}. Among these, \cite{hong2023two} proposed a two-timescale single-loop stochastic gradient algorithm (TTSA) that efficiently tackles smooth bilevel problems. 
In addition, recent works have proposed gradient-based approaches based on the value function reformulation (VP) of bilevel problems, see, e.g., \cite{kwon2023fully, liu2022bome, lu2023slm, lu2024first, shen2023penalty}. These approaches typically require smoothness conditions for the lower-level problem. Notably, \cite{lu2024first} first proposed first-order penalty methods for finding $\epsilon$-KKT solutions of (VP) via solving a minimax optimization problem.

A key challenge in gradient-type algorithms derived from the value function reformulation is the computation of the gradient or subgradient of the value function at each iteration \cite{gao2022value, ye2023}. This involves solving the corresponding lower-level problem, even when the gradient exists, as in the case where  $\nabla v(x) = \nabla_x \varphi(x, y^*(x))$, with $y^*(x) = \arg\min_{\theta \in Y} \varphi(x, \theta)$ (\cite[Remark 4.14]{bonnans2013perturbation}). 
However, exactly solving the lower-level problem at each iteration can be computationally expensive, potentially limiting the efficiency of the algorithm. 
In cases where the lower-level problem satisfies uniformly strong convexity or a global PL condition, recent works \cite{ liu2022bome, lu2023slm} use inexact solutions to the lower-level problem, replacing the exact solution $y^*(x)$ in the gradient computation, constructing an approximation to the value function gradient.
But, when the lower-level problem lacks uniform strong convexity or a global PL condition, constructing an approximation to $\nabla v$ using inexact lower-level solutions becomes challenging. Specifically, using an inexact lower-level solution $y_k$ with arbitrarily small first-order residual (i.e., $\|\nabla_y \varphi(x_k, y_k)\| \rightarrow 0$) can still result in a fixed gap between the approximate gradient $\nabla_x \varphi(x_k,y_k)$ and the true gradient $\nabla v(x_k)$, even as $k \rightarrow \infty$. As an illustrative example, consider the following simple lower-level problem,
\begin{equation}\label{toy_example}
	\min_{y \in \mathbb{R}} \varphi(x,y) :=  xy^2,
\end{equation}
where $x, y \in \mathbb{R}$. For $x>0$, $y^*(x)=0$, so $v(x) = 0$ and $\nabla v(x) = 0$. However, for any vanishing sequence $\epsilon_{k}\rightarrow 0$, consider sequences $x_k= {\epsilon_k}/{2}$ and $ y_k=1$. Here, $y_k$ is an inexact stationary point to \eqref{toy_example} for $x = x_k$, with a small first-order  residual $\|\nabla_y \varphi(x_k, y_k)\|=\epsilon_k$, which can be arbitrarily small as $k \rightarrow \infty$. Despite this, the gradient approximation $\nabla_x \varphi(x_k,y_k)$ will still have a constant gap to the true gradient $\nabla v(x_k)$, since $\|\nabla_x  \varphi(x_k,y_k) - \nabla v(x_k) \|= 1$ even as $k \rightarrow \infty$. This example demonstrates the difficulty of constructing accurate value function gradient approximations and developing gradient-type algorithms with inexact solutions to the lower-level problem  for bilevel optimization when the lower-level problem does not have uniformly strong convexity.

To overcome this challenge, in this paper, we consider the following Moreau envelope-based reformulation of the bilevel optimization problem \eqref{eq1}, first introduced in \cite{gao2024},
\begin{equation}\label{reformualtion_Moreau}
	({\rm VP})_\gamma \quad\quad
	\min\limits_{x\in X,y\in Y}\  F(x,y) \quad		{\rm s.t.}\quad\ \varphi (x,y)-v_{\gamma} (x,y)\le 0,
\end{equation}
where $v_{\gamma}(x, y)$ is the Moreau envelope associated with the lower-level problem,
\begin{equation}\label{def_vg}
	v_{\gamma}(x,y):=\inf\limits_{\theta\in Y}\varphi(x,\theta)+\frac{1}{2\gamma}\|\theta-y\|^2,
\end{equation}
where $\gamma > 0$. When the lower-level problem is convex with respect to $y$,  this reformulation is equivalent to the original bilevel optimization problem \eqref{eq1} (see \cite[Theorem 1]{gao2024}).  Directional optimality conditions for $({\rm VP})_\gamma$ are discussed in \cite{bai2025optimality}. However, as shown in \cite{gao2024} and \cite[Proposition 7]{ye2023}, classical constraint qualification (CQ) conditions do not hold at any feasible point of $({\rm VP})_\gamma$, motivating us to study the following relaxed approximation problem,
\begin{equation}\label{reformualtion_Moreau_relax}
	({\rm VP})_\gamma^{\epsilon} \quad\quad
	\min\limits_{x\in X,y\in Y}\  F(x,y) \quad		{\rm s.t.}\quad\ \varphi (x,y)-v_{\gamma} (x,y)\le \epsilon,
\end{equation}
where $\epsilon > 0$. \cite[Proposition 6]{gao2024} shows that for any $\epsilon_0 > 0$, there exists $\epsilon > 0$ such that a local minimizer of $({\rm VP})_\gamma^{\epsilon}$ is $\epsilon_0$-close to the solution set of  the original bilevel optimization problem \eqref{eq1}. 
In \cite{gao2024}, a double-loop difference-of-convex algorithm was developed to solve $({\rm VP})_\gamma^{\epsilon}$ for general bilevel optimization problems. This algorithm requires solving two optimization subproblems at each iteration, including the proximal lower-level problem \eqref{def_vg}.  
Several subsequent works \cite{liumoreau, yao2024overcoming, yaoconstrained} have adopted
ideas akin to the Moreau envelope-based reformulation to  develop single-loop gradient-based bilevel algorithms. However, these works limit their convergence analysis to the decreasing property of certain merit functions, lacking rigorous theoretical support for the convergence of iterates.

In this paper, we propose an Alternating Gradient-type algorithm with Inexact Lower-level Solutions (AGILS), for solving the approximation bilevel  problem $({\rm VP})_\gamma^{\epsilon}$. The main contributions of this paper are summarized as follows.
\begin{itemize}
	\item We propose AGILS, an alternating gradient-type algorithm designed to solve the approximate bilevel optimization problem $({\rm VP})_\gamma^{\epsilon}$. A key feature of AGILS is the incorporation of a verifiable inexactness criterion for the proximal lower-level problem \eqref{def_vg}. Unlike the double-loop difference-of-convex algorithm developed in \cite{gao2024}, which requires an exact solution to the proximal lower-level problem at each iteration, AGILS allows for inexact solutions, enhancing both the flexibility and computational efficiency of the algorithm. Furthermore, to ensure the feasibility of the iterates, we incorporate an adaptive penalty parameter update strategy and a feasibility correction procedure into AGILS.
	
	\item We establish convergence results for the proposed AGILS under mild assumptions, along with clearly defined and estimable step size ranges. Specifically, we show that AGILS subsequentially converges to a Karush-Kuhn-Tucker (KKT) stationary point of $({\rm VP})_\gamma^{\epsilon}$. Moreover, under the Kurdyka-{\L}ojasiewicz (KL) property, we establish the sequential convergence of the algorithm. This analysis is non-trivial due to the inexactness in solving the proximal lower-level problem, the alternating update scheme, and the lack of Lipschitz continuity of $\nabla v_{\gamma}$. Through careful analysis and the introduction of a new merit function, we successfully establish sequential convergence.
	
	\item The proposed AGILS can efficiently handle high-dimensional bilevel optimization problems, benefiting from its use of only an inexact solution to the proximal lower-level problem \eqref{def_vg} and an alternating gradient-type update strategy. This strategy decouples the nonsmoothness of $g(x,y)$, enabling AGILS to handle nonsmooth terms effectively. To evaluate its performance, we conduct numerical experiments on a toy example and the sparse group Lasso bilevel hyperparameter selection problem. The numerical results demonstrate the efficiency of the AGILS compared to other commonly used approaches. 
\end{itemize}

This paper is organized as follows. In Section 2, we provide the  basic assumptions, and properties of the Moreau envelope-based reformulation. Section 3 presents the proposed AGILS for solving the approximation bilevel optimization problem $({\rm VP})_\gamma^{\epsilon}$. Section 4 provides the convergence analysis towards KKT stationary points, and Section 5 establishes the sequential convergence under the KL property. Finally, Section 6 presents numerical experiments on a toy example and the sparse group Lasso bilevel hyperparameter selection problem.

\section{Preliminaries}
\subsection{Notations}

Let $\mathbb{R}^n$ denote the $n$-dimensional Euclidean space, and let $\mathbb{B}$ denote the closed unit ball centered at the origin. The nonnegative and positive orthants in $\mathbb{R}^n$ are denoted by $\mathbb{R}^n_+$ and $\mathbb{R}^n_{++}$, respectively. The standard inner product and Euclidean norm are denoted by $\langle \cdot, \cdot \rangle$ and $\|\cdot\|$, respectively.
For a vector \( x \in \mathbb{R}^n \) and a closed convex set \( S \subset \mathbb{R}^n \), the distance from \( x \) to \( S \) is defined as \(\mathrm{dist}(x, S) = \min_{y \in S} \|x - y\|\). The indicator function \(\delta_S: \mathbb{R}^n \to \mathbb{R}\cup\{+\infty\}\) is defined by \(\delta_S(x) = 0\) if \(x\in S\) and \(\delta_S(x) = +\infty\) otherwise. Moreover, \(\mathcal{N}_S(x)\) denotes the normal cone to \(S\) at \(x\). The Euclidean projection operator onto \( S \) is denoted by \(\mathrm{Proj}_S\). The Cartesian product of two sets \( X \) and \( Y \) is denoted by \( X \times Y \).
Let \( h: \mathbb{R}^n \to \mathbb{R} \cup \{\infty\} \) be a proper closed function. The proximal mapping of \( h\) is defined as $\operatorname{Prox}_h(y):=\arg\min_{\theta \in \Re^m}\left\{h(\theta)+\|\theta-y\|^2 / 2\right\}$. The Fr\'{e}chet (regular) subdifferential \(\hat{\partial} h\) and the Mordukhovich (limiting) subdifferential \(\partial h\) at a point \( \bar{z} \in \mathrm{dom}\, h := \{z \mid h(z) < \infty\} \) are defined as:
\[
\begin{aligned}
	\hat{\partial} h(\bar{z}) &= \left\{ v \in \mathbb{R}^n \;\middle|\; h(z) \geq h(\bar{z}) + \langle v, z - \bar{z} \rangle + o(\|z - \bar{z}\|), \; \forall z \in \mathbb{R}^n \right\}, \\
	\partial h(\bar{z}) &= \left\{ v \in \mathbb{R}^n \;\middle|\; \exists z_k \xrightarrow{h} \bar{z}, \; \exists v_k \in \hat{\partial} h(z_k), \; v_k \to v \right\},
\end{aligned}
\]
where \( o(z) \) denotes a function satisfying \( o(z)/z\to 0 \) as \( z \downarrow 0 \), and \( z_k \xrightarrow{h} \bar{z} \) indicates \( z_k \to \bar{z} \) and \( h(z_k) \to h(\bar{z}) \) as \( k \to \infty \). If \( \bar{z} \notin \mathrm{dom}\, h \), we define \(\partial h(\bar{z}) = \emptyset\).

\subsection{Basic Assumptions}

This part outlines the assumptions on the problem data for the bilevel optimization problem studied in this work. First, we assume that the upper-level objective $F$ is smooth and bounded below.

\begin{assumption}\label{asup1}
	The function \(F : \mathbb{R}^n \times \mathbb{R}^m \to \mathbb{R}\) is bounded below on \(X \times Y\), i.e., \(\underline{F} := \inf_{(x, y) \in X \times Y} F(x, y) > -\infty\). \(F\) is continuously differentiable, and its gradients \(\nabla_x F\) and \(\nabla_y F\) are \(L_{F_x}\)- and \(L_{F_y}\)-Lipschitz continuous on \(X \times Y\), respectively.
\end{assumption}

Throughout this paper, we focus on cases where the lower-level problem is a convex optimization problem.

\begin{assumption}\label{asup2}
	For any \(x \in X\), the functions \(f(x, \cdot)\) and \(g(x, \cdot)\) are convex and defined on the closed convex set \(Y\). 
\end{assumption} 

Smoothness and weak convexity conditions are required for the lower-level objective functions. We recall that a function \(h : \mathbb{R}^n \times \mathbb{R}^m \to \mathbb{R}\) is \((\rho_1, \rho_2)\)-weakly convex on a convex set \(X \times Y\) if \(h(x, y) + \frac{\rho_1}{2} \|x\|^2 + \frac{\rho_2}{2} \|y\|^2\) is convex on \(X \times Y\). This notion generalizes convexity (\(\rho_1 = \rho_2 = 0\)). Weakly convex functions \cite{nurminskii1973quasigradient}, extends the class of convex functions to include many nonconvex functions. For weakly convex functions, the Fréchet (regular) subdifferential coincides with both the limiting subdifferential and the Clarke subdifferential (see, e.g., \cite[Proposition 3.1 and Theorem 3.6]{ngai2000approximate}).

\begin{assumption}\label{asup3}
	The lower-level objective functions \(f(x, y)\) and \(g(x, y)\) satisfy:
	\begin{itemize}
		\item[(1)] \(f(x, y)\) is continuously differentiable on \(X \times Y\), and its gradients \(\nabla_x f\) and \(\nabla_y f\) are \(L_{f_x}\)- and \(L_{f_y}\)-Lipschitz continuous on \(X \times Y\), respectively;
		\item[(2)] \(g(x, y)\) is continuous on \(X \times Y\), \(\nabla_x g(x, y)\) exists, and \(\nabla_x g(\cdot, y)\) is \(L_{g_1}\)-Lipschitz continuous on \(X\) for any \(y \in Y\), while \(\nabla_x g(x, \cdot)\) is \(L_{g_2}\)-Lipschitz continuous on \(Y\) for any \(x \in X\);
		\item[(3)] \(f(x, y)\) is \((\rho_{f_1}, \rho_{f_2})\)-weakly convex on \(X \times Y\), and \(g(x, y)\) is \((\rho_{g_1}, \rho_{g_2})\)-weakly convex on \(X \times Y\).
	\end{itemize}
\end{assumption}

\begin{remark}
	The bilevel problem \eqref{eq1} has also been studied in \cite{liumoreau}. However, in addition to Assumption \ref{asup3}, Assumption 3.2 in \cite{liumoreau} imposes an additional  Lipschitz-like continuity condition on the proximal operator $ \operatorname{Prox}_{{g}(x,\cdot)}$ 
	with respect to $x$. This assumption is challenging to verify in practice and is not required in this work.
\end{remark}

Assumption \ref{asup3}(1) implies \((\rho_{f_1}, \rho_{f_2})\)-weak convexity with \(\rho_{f_1} = \rho_{f_2} = L_f\) of $f(x,y)$ \cite[Lemma 5.7]{beck2017first}. Furthermore, the lower-level objective \(\varphi(x, y)\) is \((\rho_{\varphi_1}, \rho_{\varphi_2})\)-weakly convex on \(X \times Y\), where \(\rho_{\varphi_1} = \rho_{f_1} + \rho_{g_1}\) and \(\rho_{\varphi_2} = \rho_{f_2} + \rho_{g_2}\).

Below, we provide more specific examples of \(g(x, y)\) satisfying \cref{asup3}.

\begin{example}
	Consider $g(x, y) = x P(y)$, where \(P(y)\) is a convex and \(L_P\)-Lipschitz continuous function on \(Y\). Then \(g(x, y)\) is \((\rho_{g_1}, \rho_{g_2})\)-weakly convex on \(\mathbb{R}_+ \times \mathbb{R}^m\) for any \(\rho_{g_1}, \rho_{g_2} \ge 0\) satisfying \(\rho_{g_1} \rho_{g_2} \ge L_P^2\). 
	
	To verify this, recall that \(g(x, y)\) is \((\rho_{g_1}, \rho_{g_2})\)-weakly convex on \(X \times Y\) if and only if, for any points $(x_1, y_1), (x_2, y_2) \in X \times Y$ and $a \in [0,1]$, the approximate secant inequality holds: $g(ax_1 + (1-a)x_2, ay_1 + (1-a)y_2) \le a g(x_1, y_1) + (1-a)g(x_2,y_2) + a(1-a)/2 \cdot (\rho_{g_1}\|x_1 - x_2\|^2 + \rho_{g_2}\|y_1 - y_2 \|^2)$. Now, consider $g(x, y) = x P(y)$. For any points $(x_1, y_1), (x_2, y_2) \in \mathbb{R}_+ \times \mathbb{R}^m$ and $a \in [0,1]$, we have
	\[
	\begin{aligned}
		& g(ax_1 + (1-a)x_2, ay_1 + (1-a)y_2) - a g(x_1, y_1) - (1-a)g(x_2,y_2) \\
		= \,& (ax_1 + (1-a)x_2) P(ay_1 + (1-a)y_2) - ax_1P(y_1) - (1-a)x_2 P(y_2) \\
		\le \, & a(a -1)  x_1P(y_1) + a(1-a)x_1P(y_2) + a(1-a)x_2P(y_1) + a(a-1)x_2P(y_2) \\
		= \, & - a(1-a)\left( x_1 - x_2  \right) \left( P(y_1) - P(y_2) \right) \\
		\le \, & a(1-a)L_P \| x_1 - x_2 \| \left\| y_1 - y_2 \right\|,
	\end{aligned}
	\]
	where the first inequality follows from the convexity of $P(y)$. Therefore, when $ \rho_{g_1} \rho_{g_2} \geq L_P^2$, the approximate secant inequality is satisfied, proving the claim.
\end{example}

Next, we present two practical instances of this form, Lasso and group Lasso:

\begin{example} The following structured regularizers satisfy the above conditions:
	
	{\rm (1)} Let \(g(x, y) = x \|y\|_1\). Then $g(x, y) = x P(y)$ with \(P(y) = \|y\|_1\), which is \(\sqrt{m}\)-Lipschitz on \(\mathbb{R}^m\). Thus, $g(x,y)$ is \((m, 1)\)-weakly convex on \(\mathbb{R}_+ \times \mathbb{R}^m\).
	
	{\rm (2)} Let \(g(x, y) = \sum_{j=1}^J x_j \|y^{(j)}\|_2\), where \(y^{(j)}\) denotes the \(j\)-th group of \(y\). Then \(g(x, y) = \sum_{j=1}^J x_j P_j(y^{(j)}) \) with \(P_j(y^{(j)}) = \|y^{(j)}\|_2 \), which is \( 1 \)-Lipschitz. Thus, \(g(x, y)\) is \((1, 1)\)-weakly convex on \(\mathbb{R}^J_+ \times \mathbb{R}^m\).
	
\end{example}

\subsection{On the Moreau Envelope Function Reformulation}

This part explores key properties of the Moreau envelope function $v_\gamma(x,y)$ and the reformulation $({\rm VP})_\gamma$, as defined in \eqref{reformualtion_Moreau} and analyzed in \cite{gao2024}. These properties form the foundation for the convergence analysis of the proposed algorithm.

Under Assumption \ref{asup2}, the solution mapping
\begin{equation}\label{proximal_LL}
	S_{\gamma}(x, y) := \mathrm{argmin}_{\theta \in Y} \left\{ f(x, \theta) + g(x, \theta) + \frac{1}{2\gamma}\|\theta - y\|^2 \right\}
\end{equation}
is well-defined and single-valued  on $X \times \mathbb{R}^m$. We denote its unique solution by $\theta_\gamma^*(x, y)$.
Below, we summarize sensitivity results for $v_\gamma(x, y)$ from Theorems 2, 3, and 5 of \cite{gao2024}.

\begin{proposition}\label{grad_v}
	Suppose Assumptions \ref{asup2} and \ref{asup3} hold. For $\gamma \in (0, 1/(2\rho_{f_2} + 2\rho_{g_2}))$, the function $v_\gamma(x, y)$ is $(\rho_{v_1}, \rho_{v_2})$-weakly convex on $X \times \mathbb{R}^m$ with $\rho_{v_1} \geq \rho_{f_1} + \rho_{g_1}$ and $\rho_{v_2} \geq 1/\gamma$. Additionally, $v_\gamma(x, y)$ is differentiable on $X \times \mathbb{R}^m$ with gradient
	\begin{equation}\label{valuefinc}
		\nabla v_\gamma(x, y) = \Big( \nabla_x f(x, \theta_\gamma^*(x, y)) + \nabla_x g(x, \theta_\gamma^*(x, y)), \left({y - \theta_\gamma^*(x, y)}\right)/{\gamma} \Big).
	\end{equation}
\end{proposition}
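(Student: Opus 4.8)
The plan is to recover the three assertions --- single-valuedness of the proximal solution map $S_\gamma$, $(\rho_{v_1},\rho_{v_2})$-weak convexity of $v_\gamma$, and the gradient formula \eqref{valuefinc} --- by the standard route for Moreau envelopes of jointly weakly convex functions, exploiting the crucial fact that the nonsmoothness of $g$ and the constraint $\theta\in Y$ act only on the inner variable $\theta$. For single-valuedness, fix $(x,y)\in X\times\mathbb{R}^m$: under Assumption \ref{asup2} the map $\theta\mapsto\varphi(x,\theta)+\delta_Y(\theta)$ is proper, closed and convex, so $\theta\mapsto\varphi(x,\theta)+\delta_Y(\theta)+\frac{1}{2\gamma}\|\theta-y\|^2$ is $\tfrac1\gamma$-strongly convex and coercive, hence has a unique minimizer, namely $\theta_\gamma^*(x,y)$.

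For the weak convexity I would use that partial minimization preserves convexity. Set
\[
\Psi(x,y,\theta):=\varphi(x,\theta)+\delta_Y(\theta)+\tfrac{1}{2\gamma}\|\theta-y\|^2+\tfrac{\rho_{v_1}}{2}\|x\|^2+\tfrac{\rho_{v_2}}{2}\|y\|^2,
\]
so that $\inf_{\theta}\Psi(x,y,\theta)=v_\gamma(x,y)+\tfrac{\rho_{v_1}}{2}\|x\|^2+\tfrac{\rho_{v_2}}{2}\|y\|^2$. Writing $\rho_{\varphi_1}=\rho_{f_1}+\rho_{g_1}$ and $\rho_{\varphi_2}=\rho_{f_2}+\rho_{g_2}$ and invoking the $(\rho_{\varphi_1},\rho_{\varphi_2})$-weak convexity of $\varphi$, decompose $\Psi=\big[\varphi(x,\theta)+\delta_Y(\theta)+\tfrac{\rho_{\varphi_1}}{2}\|x\|^2+\tfrac{\rho_{\varphi_2}}{2}\|\theta\|^2\big]+Q(x,y,\theta)$, with the bracket jointly convex and $Q(x,y,\theta)=\tfrac{1}{2\gamma}\|\theta-y\|^2+\tfrac{\rho_{v_1}-\rho_{\varphi_1}}{2}\|x\|^2+\tfrac{\rho_{v_2}}{2}\|y\|^2-\tfrac{\rho_{\varphi_2}}{2}\|\theta\|^2$ a quadratic. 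Convexity of $Q$ splits off $\rho_{v_1}\ge\rho_{\varphi_1}$ for the $x$-block and, for the $(\theta,y)$-block, positive semidefiniteness of $\left(\begin{smallmatrix}(1/\gamma-\rho_{\varphi_2})I & -(1/\gamma)I\\ -(1/\gamma)I & (1/\gamma+\rho_{v_2})I\end{smallmatrix}\right)$; a Schur-complement computation on the $(2,2)$ block reduces this to $1/\gamma-\rho_{\varphi_2}-\frac{1}{\gamma^{2}(1/\gamma+\rho_{v_2})}\ge0$, which for $\rho_{v_2}=1/\gamma$ reads $\tfrac{1}{2\gamma}-\rho_{\varphi_2}>0$, i.e. $\gamma<1/(2\rho_{f_2}+2\rho_{g_2})$, and holds a fortiori for every $\rho_{v_2}\ge 1/\gamma$. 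Hence $\Psi$ is jointly convex and its infimal projection onto $(x,y)$ is convex, which is exactly the claimed weak convexity.

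For differentiability and the gradient, observe that for fixed $x$ the function $v_\gamma(x,\cdot)$ is the Moreau envelope (with parameter $\gamma$) of the closed convex function $\varphi(x,\cdot)+\delta_Y$, so by Moreau's classical theorem it is differentiable in $y$ with $\nabla_y v_\gamma(x,y)=(y-\theta_\gamma^*(x,y))/\gamma$. For the $x$-variable I would run a Danskin-type argument: optimality of $\theta_\gamma^*(x,y)$ gives the upper envelope $v_\gamma(x',y)\le\varphi(x',\theta_\gamma^*(x,y))+\tfrac{1}{2\gamma}\|\theta_\gamma^*(x,y)-y\|^2$, whose right-hand side is $C^1$ in $x'$ by Assumption \ref{asup3}; combining this with local Lipschitz continuity of $(x,y)\mapsto\theta_\gamma^*(x,y)$ --- which follows from the uniform strong-convexity modulus $1/\gamma-\rho_{\varphi_2}>0$ of the inner problem together with joint continuity of its objective, via a standard limit-point argument --- and with the Lipschitzness of $\nabla_x f$ and $\nabla_x g$, one obtains $\nabla_x v_\gamma(x,y)=\nabla_x f(x,\theta_\gamma^*(x,y))+\nabla_x g(x,\theta_\gamma^*(x,y))$. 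Both partial gradients are continuous in $(x,y)$ (again through continuity of $\theta_\gamma^*$ and of $\nabla_x f,\nabla_x g$), so $v_\gamma$ is jointly $C^1$ with gradient \eqref{valuefinc}.

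I expect the main obstacle to be the $x$-differentiability in the last step: because $g(x,\cdot)$ is nonsmooth and the constraint $\theta\in Y$ is generically active, no implicit function theorem applies, so one is forced through the two-sided envelope estimate and must first establish joint continuity (indeed local Lipschitz continuity) of the inner minimizer $\theta_\gamma^*$. The quantitative heart is the Schur-complement inequality in the weak-convexity step, which pins down the precise threshold $\gamma<1/(2\rho_{f_2}+2\rho_{g_2})$ as exactly the condition making the (weak-convexity-shifted) inner problem convex, hence the infimal projection convex.
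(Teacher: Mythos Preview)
The paper does not supply its own proof of this proposition; it merely records the result as a summary of Theorems~2, 3 and~5 of the reference \cite{gao2024}. Your proposal is therefore not directly comparable to a proof in the paper, but it is a correct self-contained argument. Judging from the proof the paper does give for the closely related Proposition~\ref{partial_grad_v} (partial weak convexity), the cited reference most likely establishes weak convexity by the same infimal-projection argument you use, and obtains the gradient formula via general marginal-function calculus (Mordukhovich's Theorems~1.22, 4.1(ii), 4.17) rather than the classical Moreau/Danskin route you sketch; the two approaches are equivalent here since the inner minimizer is unique.

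One small imprecision: you assert local \emph{Lipschitz} continuity of $(x,y)\mapsto\theta_\gamma^*(x,y)$, but under Assumptions~\ref{asup2}--\ref{asup3} alone Lipschitz continuity in $x$ is not immediate (the paper later introduces Assumption~\ref{asup5} precisely to control the $x$-dependence of $\partial_y g$, and even then only proves Lipschitz continuity in $y$). However, ordinary \emph{continuity} of $\theta_\gamma^*$ does follow from the $1/\gamma$-strong convexity of the inner problem and joint continuity of its objective, and continuity is all that is needed for your Danskin-type step and for concluding that the partial gradients are continuous, hence $v_\gamma\in C^1$. With that correction the argument is complete.
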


In our algorithm's convergence analysis, we require only the partial weak convexity of $v_\gamma(x, y)$, which differs from the full weak convexity used in \cite{gao2024}. This relaxed condition allows for a larger range of the regularization parameter $\gamma$ and smaller weak convexity constants of $v _\gamma(x, y)$, enabling larger step sizes in the algorithm. The partial weak convexity of $v_\gamma(x, y)$ is established below.

\begin{proposition}\label{partial_grad_v}
	Suppose Assumptions \ref{asup2} and \ref{asup3} hold. For $\gamma \in (0, 1/(\rho_{f_2} + \rho_{g_2}))$, the function $v_\gamma(x, y)$ is $\rho_{v_1}$-weakly convex with respect to $x$ on $X$ for any $y \in Y$ with $\rho_{v_1} \geq \rho_{f_1} + \rho_{g_1}$. It is also convex with respect to $y$ on $\mathbb{R}^m$ for any $x \in X$. Moreover, $v_\gamma(x, y)$ is differentiable with gradient given by \eqref{valuefinc} on $X \times \mathbb{R}^m$, and $\theta_\gamma^*(x, y)$ and $\nabla v_\gamma(x, y)$ are continuous on $X \times \mathbb{R}^m$.
\end{proposition}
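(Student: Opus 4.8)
Proof proposal: The statement comprises three claims — $\rho_{v_1}$-weak convexity of $v_\gamma(\cdot,y)$ on $X$, convexity of $v_\gamma(x,\cdot)$ on $\mathbb{R}^m$, and differentiability of $v_\gamma$ on $X\times\mathbb{R}^m$ with the gradient formula \eqref{valuefinc} together with continuity of $\theta_\gamma^*$ and $\nabla v_\gamma$. I would handle the three separately, using throughout the observation that, \emph{for every} $\gamma>0$, the inner proximal problem \eqref{proximal_LL} is $\tfrac1\gamma$-strongly convex in $\theta$ under Assumption \ref{asup2} alone; hence $\theta_\gamma^*$ is unconditionally well defined and continuous-amenable, and the restriction $\gamma\in(0,1/(\rho_{f_2}+\rho_{g_2}))$ enters \emph{only} through the weak-convexity constant. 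Convexity in $y$ is immediate: for fixed $x\in X$, the function $h_x:=\varphi(x,\cdot)+\delta_Y$ is proper, closed and convex, and $v_\gamma(x,\cdot)$ is its Moreau envelope, i.e. the infimal convolution of $h_x$ with $\tfrac1{2\gamma}\|\cdot\|^2$, which is convex because the infimal convolution of convex functions is convex.

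For the partial weak convexity in $x$, introduce $q(x,y,\theta):=f(x,\theta)+g(x,\theta)+\tfrac1{2\gamma}\|\theta-y\|^2$, so that $v_\gamma(x,y)=\inf_{\theta\in\mathbb{R}^m}\big(q(x,y,\theta)+\delta_Y(\theta)\big)$, and fix $y\in Y$. By Assumption \ref{asup3}(3) the maps $f(x,\theta)+\tfrac{\rho_{f_1}}2\|x\|^2+\tfrac{\rho_{f_2}}2\|\theta\|^2$ and $g(x,\theta)+\tfrac{\rho_{g_1}}2\|x\|^2+\tfrac{\rho_{g_2}}2\|\theta\|^2$ are convex on $X\times Y$; summing them and rearranging gives
\[
q(x,y,\theta)+\tfrac{\rho_{f_1}+\rho_{g_1}}2\|x\|^2
=\underbrace{\Big[f(x,\theta)+g(x,\theta)+\tfrac{\rho_{f_1}+\rho_{g_1}}2\|x\|^2+\tfrac{\rho_{f_2}+\rho_{g_2}}2\|\theta\|^2\Big]}_{\text{convex in }(x,\theta)}
+\underbrace{\Big[\tfrac1{2\gamma}\|\theta-y\|^2-\tfrac{\rho_{f_2}+\rho_{g_2}}2\|\theta\|^2\Big]}_{\text{a }\theta\text{-quadratic}}.
\]
The second bracket has Hessian $\big(\tfrac1\gamma-\rho_{f_2}-\rho_{g_2}\big)I$, positive semidefinite exactly because $\gamma<1/(\rho_{f_2}+\rho_{g_2})$, hence it is convex; therefore the right-hand side, and with it $q(x,y,\theta)+\delta_Y(\theta)+\tfrac{\rho_{f_1}+\rho_{g_1}}2\|x\|^2$, is jointly convex on $X\times\mathbb{R}^m$. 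Taking the infimum over $\theta$, the marginal map $x\mapsto v_\gamma(x,y)+\tfrac{\rho_{f_1}+\rho_{g_1}}2\|x\|^2$ is convex on $X$, which is the asserted $\rho_{v_1}$-weak convexity with $\rho_{v_1}=\rho_{f_1}+\rho_{g_1}$, and hence for any larger $\rho_{v_1}$.

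Finally, for differentiability and \eqref{valuefinc}, the $y$-component $\nabla_y v_\gamma(x,y)=(y-\theta_\gamma^*(x,y))/\gamma$ is the classical Moreau-envelope identity for the convex $h_x$ above. For the $x$-component I would argue by a Danskin-type sandwich. From $v_\gamma(x',y')\le q(x',y',\theta_\gamma^*(x,y))$ and the smoothness of $q(\cdot,\cdot,\theta)$ in its first two arguments one reads off that $\nabla_{(x,y)}q(x,y,\theta_\gamma^*(x,y))=\big(\nabla_x f(x,\theta_\gamma^*(x,y))+\nabla_x g(x,\theta_\gamma^*(x,y)),\,(y-\theta_\gamma^*(x,y))/\gamma\big)$ is a Fr\'echet supergradient of $v_\gamma$ at $(x,y)$; the matching lower estimate uses $v_\gamma(x',y')=q(x',y',\theta_\gamma^*(x',y'))$ and $v_\gamma(x,y)\le q(x,y,\theta_\gamma^*(x',y'))$, so that the remainders close up once $\theta_\gamma^*$ is continuous, thanks to the (uniform) Lipschitz continuity of $\nabla_x f$ and $\nabla_x g$ in both arguments from Assumption \ref{asup3}. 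Continuity of $\theta_\gamma^*$ I would obtain from $\tfrac1\gamma$-strong convexity: along $(x_n,y_n)\to(x,y)$, the minimizers $\theta_\gamma^*(x_n,y_n)$ remain bounded (strong convexity yields coercivity that is locally uniform in $(x,y)$, even when $Y$ is unbounded), and every cluster point minimizes $q(x,y,\cdot)+\delta_Y$, hence equals $\theta_\gamma^*(x,y)$ by uniqueness; this also gives continuity of $v_\gamma$, whence continuity of $\nabla v_\gamma$ through \eqref{valuefinc}. Alternatively, since strong convexity of \eqref{proximal_LL} holds for all $\gamma>0$, the proofs behind Proposition \ref{grad_v} (Theorems 2, 3 and 5 of \cite{gao2024}) carry over unchanged on the wider range $\gamma\in(0,1/(\rho_{f_2}+\rho_{g_2}))$. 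I expect the main obstacle to be exactly this last part — making the sandwich rigorous requires the error terms to be uniform for $\theta$ near $\theta_\gamma^*(x,y)$ and a careful argument for continuity of the argmin when $Y$ is unbounded — whereas the weak-convexity bookkeeping, the genuine improvement over \cite{gao2024}, is routine once the $x$- and $\theta$-constants are decoupled as above.
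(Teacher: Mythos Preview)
Your proposal is correct, and the argmin-continuity argument you give is essentially the same as the paper's. The routes differ in two places. For differentiability, the paper does not use a Danskin sandwich but instead invokes parametric-minimization machinery from Mordukhovich's book: local Lipschitz continuity of $v_\gamma$ follows from level-boundedness of the inner objective in $\theta$ locally uniformly in $(x,y)$ via \cite[Corollary 4.3(ii)]{mordukhovich2018variational}, and the gradient formula from \cite[Theorems 1.22, 4.1(ii), 4.17]{mordukhovich2018variational}. Your sandwich argument is more self-contained and makes explicit why Lipschitz continuity of $\nabla_x f,\nabla_x g$ in both arguments and continuity of $\theta_\gamma^*$ are what close the lower estimate; the paper's route is shorter to write but relies on black-box theorems. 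For the partial weak convexity, the paper simply says the argument is ``similar to the proof of Theorem 2 in \cite{gao2024}'', whereas you actually carry out the decoupling: adding $\tfrac{\rho_{f_1}+\rho_{g_1}}{2}\|x\|^2$ and splitting off the $\theta$-quadratic with Hessian $(\tfrac1\gamma-\rho_{f_2}-\rho_{g_2})I$ is exactly the computation that explains why the wider range $\gamma<1/(\rho_{f_2}+\rho_{g_2})$ suffices when only $x$-weak-convexity is needed. This explicit bookkeeping is the substantive content of the proposition relative to Proposition~\ref{grad_v}, and your version makes it visible.
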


\begin{proof}
	We begin by showing that $v_{\gamma}(x,y) $ is continuously differentiable on $X \times \mathbb{R}^m$. By Assumption \ref{asup2}, the function $f(x, \theta) + g(x, \theta) + \frac{1}{2\gamma}\|\theta-y\|^2$ is level-bounded in $\theta$ locally uniformly for any $(\bar{x}, \bar{y}) \in X \times \mathbb{R}^m $. Specifically, for any $c \in \mathbb{R}$, there exist a compact set $D$ and a neighborhood $Z$ of $(\bar{x}, \bar{y})$ such that the level set $\{ \theta \in \mathbb{R}^m \mid f(x, \theta) + g(x, \theta) + \frac{1}{2\gamma}\|\theta-y\|^2 \le c\}$ is contained in $D$ for all $(x,y) \in Z$. Using Corollary 4.3(ii) from \cite{mordukhovich2018variational}, it follows that $v_{\gamma}(x,y) $ is locally Lipschitz continuous on $X \times \mathbb{R}^m$. Furthermore, Theorem 1.22, Theorem 4.1(ii) and Theorem 4.17 in \cite{mordukhovich2018variational} imply that $v_\gamma({x},{y})$ is differentiable on $X \times \mathbb{R}^m$ with its gradient given as in \eqref{valuefinc}. The partial weak convexity property of $v_\gamma({x},{y})$ can be established using arguments similar to those in the proof of Theorem 2 in \cite{gao2024}.
	
	Next, we show that $ \nabla v_\gamma({x},{y})$ is continuous on $X \times \mathbb{R}^m$ by showing the continuity of $\theta_{\gamma}^*(x,y)$ on $X \times \mathbb{R}^m$. Let $\{(x_k, y_k)\} \subset X \times \mathbb{R}^m$ be any sequence such that $(x_k, y_k) \rightarrow (\bar{x}, \bar{y})$. Since $f(x, \theta) + g(x, \theta) + \frac{1}{2\gamma}\|\theta-y\|^2$ is level-bounded in $\theta$ locally uniformly for $(\bar{x}, \bar{y}) \in X \times \mathbb{R}^m $, the sequence $\{\theta^*_{\gamma}(x_k,y_k)\}$ is bounded. Let $\bar{\theta}$ be an accumulation point of $\{\theta^*_{\gamma}(x_k,y_k)\}$. For any $\theta \in Y$, we have $f(x_k, \theta^*_{\gamma}(x_k,y_k)) + g(x_k, \theta^*_{\gamma}(x_k,y_k)) + \frac{1}{2\gamma}\|\theta^*_{\gamma}(x_k,y_k)-y_k\|^2 \le f(x_k, \theta) + g(x_k, \theta) + \frac{1}{2\gamma}\|\theta-y_k\|^2$. Taking the limit as $k \rightarrow \infty$, and using the continuity of $f$ and $g$, we obtain that for any $\theta \in Y$, $f(\bar{x}, \bar{\theta}) + g(\bar{x}, \bar{\theta}) + \frac{1}{2\gamma}\|\bar{\theta}-\bar{y}\|^2 \le f(\bar{x}, \theta) + g(\bar{x}, \theta) + \frac{1}{2\gamma}\|\theta-\bar{y}\|^2$. This implies $\bar{\theta} = \theta^*_{\gamma}(\bar{x}, \bar{y})$. Therefore, $\theta^*_{\gamma}(x_k,y_k) \rightarrow \theta^*_{\gamma}(\bar{x}, \bar{y})$, establishing the continuity of $\theta_{\gamma}^*(x,y)$ on $X \times \mathbb{R}^m$.
\end{proof}

The solution mapping $\theta_\gamma^*(x, y)$ of the proximal problem defining $v_\gamma(x, y)$ exhibits Lipschitz continuity with respect to $y$, as established below. 

\begin{lemma}\label{Lip_theta}
	Suppose Assumptions \ref{asup2} and \ref{asup3} hold. The mapping $\theta_\gamma^*(x, y)$ is $L_{\theta^*}$-Lipschitz continuous with respect to $y$ for any $x \in X$, with $L_{\theta^*} := 1 + \sqrt{1 - s/\gamma}$ where $s = \gamma/(\gamma L_{f_y} + 1)^2$.
\end{lemma}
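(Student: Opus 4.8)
The plan is to establish the Lipschitz property by exploiting the strong convexity introduced by the proximal term $\frac{1}{2\gamma}\|\theta-y\|^2$ together with the first-order optimality characterization of $\theta_\gamma^*(x,y)$. Fix $x\in X$ and write $\theta_1 = \theta_\gamma^*(x,y_1)$, $\theta_2 = \theta_\gamma^*(x,y_2)$. The objective $\theta\mapsto \varphi(x,\theta)+\frac{1}{2\gamma}\|\theta-y\|^2$ is convex in $\theta$ under Assumption~\ref{asup2} (indeed the proximal term makes it $\frac{1}{\gamma}$-strongly convex since $\varphi(x,\cdot)$ is convex). The optimality conditions read
\[
0 \in \partial_\theta\varphi(x,\theta_i) + \tfrac{1}{\gamma}(\theta_i - y_i) + \mathcal{N}_Y(\theta_i), \qquad i=1,2.
\]
Subtracting, pairing with $\theta_1-\theta_2$, and using monotonicity of $\partial_\theta\varphi(x,\cdot)+\mathcal{N}_Y$ would give the crude bound $\|\theta_1-\theta_2\|\le \|y_1-y_2\|$, i.e.\ the nonexpansiveness of $\operatorname{Prox}$. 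But the target constant $L_{\theta^*} = 1+\sqrt{1-s/\gamma}$ (with $s=\gamma/(\gamma L_{f_y}+1)^2$) is generally smaller than $2$ and must come from a sharper argument, so the crude bound alone will not suffice.

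The key idea I would pursue is to split $\varphi = f+g$ and treat the smooth part $f$ and the nonsmooth part $g$ differently, using that $f(x,\cdot)$ has an $L_{f_y}$-Lipschitz gradient. Define the single-variable operator $T_y(\theta) := \operatorname{Prox}_{\gamma g(x,\cdot)+\delta_Y}\big(\theta - \gamma\nabla_\theta f(x,\theta) + \text{(terms in $y$)}\big)$ so that $\theta_\gamma^*(x,y)$ is its unique fixed point; equivalently, rescale so that $\theta_i$ solves $\theta = \operatorname{Prox}_{s\, g(x,\cdot)+\delta_Y}\big((1-s/\gamma)\theta + (s/\gamma)y_i - s\nabla_\theta f(x,\theta)\big)$ for the choice $s=\gamma/(\gamma L_{f_y}+1)^2$, which is exactly the value appearing in the statement. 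This particular $s$ is the one that makes the map $\theta\mapsto \theta - s\nabla_\theta f(x,\theta)$ a contraction-type / averaged operator when $f$ has $L_{f_y}$-Lipschitz gradient: with $s\le 1/L_{f_y}$ the gradient step $I - s\nabla_\theta f(x,\cdot)$ is $\sqrt{1-s/\gamma}$-related to the relevant weighting. I would then write
\[
\theta_1-\theta_2 = \operatorname{Prox}\big(u_1\big) - \operatorname{Prox}\big(u_2\big),
\]
bound the right side by $\|u_1-u_2\|$ via nonexpansiveness of the proximal operator, and expand $u_1-u_2 = (1-s/\gamma)(\theta_1-\theta_2) + (s/\gamma)(y_1-y_2) - s(\nabla_\theta f(x,\theta_1)-\nabla_\theta f(x,\theta_2))$. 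Estimating the gradient difference by $L_{f_y}\|\theta_1-\theta_2\|$ and using $s = \gamma/(\gamma L_{f_y}+1)^2$ (so that $1-s/\gamma = ((\gamma L_{f_y})^2 + 2\gamma L_{f_y})/(\gamma L_{f_y}+1)^2$ and $sL_{f_y} = \gamma L_{f_y}/(\gamma L_{f_y}+1)^2$) should let me combine the coefficients of $\|\theta_1-\theta_2\|$ into a single factor $\sqrt{1-s/\gamma}$ after grouping the gradient term with the proximal term appropriately (using that $\theta\mapsto (1-s/\gamma)\theta - s\nabla_\theta f(x,\theta)$ is the "reflected/averaged" operator whose Lipschitz modulus in the right metric is $\sqrt{1-s/\gamma}$), yielding
\[
\|\theta_1-\theta_2\| \le \sqrt{1-s/\gamma}\,\|\theta_1-\theta_2\| + \tfrac{s}{\gamma}\|y_1-y_2\| + \text{(controlled terms)}.
\]
Solving for $\|\theta_1-\theta_2\|$ and simplifying the resulting constant $\frac{s/\gamma}{1-\sqrt{1-s/\gamma}}$, which equals $1+\sqrt{1-s/\gamma}$ after rationalizing the denominator, gives the claimed $L_{\theta^*}$.

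The main obstacle I anticipate is getting the bookkeeping of the splitting exactly right so that the coefficient of $\|\theta_1-\theta_2\|$ collapses to precisely $\sqrt{1-s/\gamma}$ rather than some larger quantity — this hinges on recognizing the right inner-product/averagedness inequality for the gradient step operator $I - s\nabla_\theta f(x,\cdot)$ under the $L_{f_y}$-smoothness and convexity of $f(x,\cdot)$ (a Baillon–Haddad-type argument), and on checking that $s = \gamma/(\gamma L_{f_y}+1)^2 \le 1/L_{f_y}$ so that this operator is indeed nonexpansive (equivalently $\gamma L_{f_y} \le (\gamma L_{f_y}+1)^2$, which always holds). A secondary point to handle carefully is the presence of the normal cone $\mathcal{N}_Y$: since $g(x,\cdot)+\delta_Y$ is still proper, closed, convex, its proximal operator is firmly nonexpansive, so $Y$ causes no additional difficulty once it is absorbed into the nonsmooth term. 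Finally I would note that the argument is uniform in $x\in X$, giving Lipschitz continuity of $\theta_\gamma^*(\cdot,y)$'s $y$-dependence with the stated constant independent of $x$.
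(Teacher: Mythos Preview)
Your proposal is essentially the paper's proof: the same fixed-point identity $\theta_i=\operatorname{Prox}_{s\tilde g(x,\cdot)}\big(\theta_i-s(\nabla_y f(x,\theta_i)+(\theta_i-y_i)/\gamma)\big)$, the same peel-off of $(s/\gamma)\|y_1-y_2\|$ via nonexpansiveness, and the same rationalization $\frac{s/\gamma}{1-\sqrt{1-s/\gamma}}=1+\sqrt{1-s/\gamma}$. The one place you are vague---why the remaining map has Lipschitz modulus exactly $\sqrt{1-s/\gamma}$---is made concrete in the paper by squaring and using that $\theta\mapsto \nabla_y f(x,\theta)+(\theta-y_1)/\gamma$ is $1/\gamma$-strongly monotone and $(L_{f_y}+1/\gamma)$-Lipschitz, so the coefficient becomes $1+s^2(L_{f_y}+1/\gamma)^2-2s/\gamma$, which equals $1-s/\gamma$ precisely for $s=\gamma/(\gamma L_{f_y}+1)^2$.
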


\begin{proof}
	For any $y_1, y_2 \in \mathbb{R}^m$, let $\theta^*_1:= \theta_\gamma^*(x,y_1)$ and $\theta^*_2 := \theta_\gamma^*(x,y_2)$ be  optimal solutions to the problem $\min_{\theta \in Y} \, \varphi(x, \theta) + \frac{1}{2\gamma}\|\theta-y\|^2 $ with $y = y_1$ and $y_2$, respectively. From the first-order optimality conditions, we have
	\begin{equation*}
		0 \in\nabla_{y} f(x, \theta^*_i) + \partial_{y} g(x, \theta^*_i)  + (\theta^*_i - y_i)/\gamma + \mathcal{N}_Y (\theta^*_i), \quad \mathrm{for} ~ i = 1,2.
	\end{equation*}
	Since $\tilde{g}(x,y) := g(x,y) + \delta_{Y}(y)$ is convex with respect to $y$, we have
	\begin{equation}\label{lem_theta_eq1}
		\theta^*_i= \mathrm{Prox}_{s \tilde{g}(x,\cdot)} \left(\theta^*_i - s\left(\nabla_{y} f(x, \theta^*_i) + (\theta^*_i - y_i)/\gamma \right) \right), \quad \mathrm{for} ~ i = 1,2,
	\end{equation}
	for any $s > 0$.
	Using the nonexpansiveness of $ \mathrm{Prox}_{s \tilde{g}(x,\cdot)}$, we have
	\begin{equation}\label{lem_theta_eq3}
		\begin{aligned}
			&\| \theta^*_1  - \theta^*_2 \| \\ 
			\le \, & \big\|  \left(\theta^*_1 - s\left(\nabla_{y} f(x, \theta^*_1) + (\theta^*_1 - y_1)/\gamma \right) \right) - \left(\theta^*_2 - s\left(\nabla_{y} f(x, \theta^*_2) + (\theta^*_2 - y_2)/\gamma \right) \right) \big\|\\
			\le \, &\big\|  \left(\theta^*_1 - s\left(\nabla_{y} f(x, \theta^*_1) + (\theta^*_1 - y_1)/\gamma \right) \right) - \left(\theta^*_2 - s\left(\nabla_{y} f(x, \theta^*_2) + (\theta^*_2 - y_1)/\gamma \right) \right) \big\|\\
			&+\frac{s}{\gamma}  \|y_1 - y_2\|,
		\end{aligned}
	\end{equation}
	where the last inequality follows from the decomposition $s(\theta^*_2 - y_2)/\gamma = s(\theta^*_2 - y_1)/\gamma + s( y_1 - y_2)/\gamma$ and the triangle inequality. 
	
	Since $\nabla_y f(x,\cdot)$ is $L_{f_y}$-Lipschitz continuous on $Y$, it holds that
	\begin{equation*}
		\left\| \nabla_y f(x, \theta_1^*) + (\theta_1^* - y_1)/\gamma- \nabla_y f(x, \theta_2^*) - (\theta_2^* - y_1)/\gamma \right\| \le \left( L_{f_y} + 1/\gamma \right) \| \theta_1^* - \theta_2^* \|.
	\end{equation*}
	Furthermore, 
	since $f(x,\theta) + \frac{1}{2\gamma}\|\theta - y_1\|^2$ is $1/\gamma$-strongly convex with respect to $\theta$ on $Y$, we obtain
	\begin{equation*}\label{lem_theta_eq4}
		\begin{aligned}
			\left\langle  \nabla_{y} f(x, \theta^*_1) + (\theta^*_1 - y_1)/\gamma -  \nabla_{y} f(x, \theta^*_2) - (\theta^*_2 - y_1)/\gamma, \theta^*_1  - \theta^*_2 \right\rangle \ge \frac{1}{\gamma} \| \theta^*_1  - \theta^*_2\|^2.
		\end{aligned}
	\end{equation*}
	Now, let us square the first term in the last line of \eqref{lem_theta_eq3}. Applying the above bounds and choosing $ s =  \gamma/(\gamma L_{f_y} + 1)^2 $, we obtain
	\begin{align*}
		&\quad\,\left\| \left(\theta_1^* - s \left( \nabla_y f(x, \theta_1^*) + (\theta_1^* - y_1)/\gamma\right) \right) - \left( \theta_2^* - s \left( \nabla_y f(x, \theta_2^*) + (\theta_2^* - y_1)/\gamma \right) \right) \right\|^2 \\
		&= \| \theta_1^* - \theta_2^* \|^2 + s^2 \left\| \nabla_y f(x, \theta_1^*) + (\theta_1^* - y_1)/\gamma- \nabla_y f(x, \theta_2^*) - (\theta_2^* - y_1)/\gamma \right\|^2 \\
		&\quad - 2s \left\langle \nabla_y f(x, \theta_1^*) + (\theta_1^* - y_1)/\gamma- \nabla_y f(x, \theta_2^*) - (\theta_2^* - y_1)/\gamma,\ \theta_1^* - \theta_2^* \right\rangle
		\\ 	&\le \left( 1 + s^2 \left( L_{f_y} + 1/\gamma\right)^2 - 2s/\gamma  \right) \| \theta_1^* - \theta_2^* \|^2 \\
		&= \left( 1 - s/\gamma \right) \| \theta_1^* - \theta_2^* \|^2.
	\end{align*}
	Consequently, we have
	\[
	\begin{aligned}
		&\big\|  \left(\theta^*_1 - s\left(\nabla_{y} f(x, \theta^*_1) + (\theta^*_1 - y_1)/\gamma \right) \right) - \left(\theta^*_2 - s\left(\nabla_{y} f(x, \theta^*_2) + (\theta^*_2 - y_1)/\gamma \right) \right) \big\| \\
		\le \,& \sqrt{1- s/\gamma } \big\| \theta^*_1  - \theta^*_2 \big\|. 
	\end{aligned}
	\]
	Combining this with \eqref{lem_theta_eq3}, we derive
	\begin{equation*}\label{lem_theta_eq5}
		\| \theta^*_1  - \theta^*_2 \| 	\le  \sqrt{1- s/\gamma } \big\| \theta^*_1  - \theta^*_2 \big\| + s/\gamma \cdot\|y_1 - y_2\|,
	\end{equation*}
	leading to the desired conclusion.
\end{proof}

This section concludes with the concept of a stationary point for problem $({\rm VP})_{\gamma}^{\epsilon}$. This stationary point, under certain constraint qualifications detailed in \cite{bai2025optimality,gao2024}, can be considered a candidate for an optimal solution. 

\begin{definition}
	Let $(\bar{x},\bar{y})$ be a feasible solution of problem $({\rm VP})_{\gamma}^{\epsilon}$ with $\epsilon > 0$. We say that $(\bar{x},\bar{y})$ is a stationary/KKT point of problem $({\rm VP})_{\gamma}^{\epsilon}$ if there exists a multiplier $\lambda \ge 0$ such that
	\begin{equation*}
		\begin{cases}
			0\in \nabla F(\bar{x},\bar{y}) + \lambda \left(\nabla f(\bar{x},\bar{y}) + \partial g(\bar{x},\bar{y}) - \nabla v_{\gamma}(\bar{x},\bar{y})\right) + \mathcal{N}_{X\times Y}(\bar{x},\bar{y}), \\
			f(\bar{x},\bar{y}) + g(\bar{x},\bar{y}) - v_{\gamma}(\bar{x},\bar{y}) \leq \epsilon, \qquad \lambda \left(f(\bar{x},\bar{y}) + g(\bar{x},\bar{y}) - v_{\gamma}(\bar{x},\bar{y})-\epsilon\right) = 0.
		\end{cases}
	\end{equation*}
\end{definition}

\section{Algorithm design}
\label{sec3}
In this section, we propose a new gradient-type algorithm, Alternating Gradient-type algorithm with Inexact Lower-level Solutions (AGILS), to solve ${\rm (VP)}_\gamma^\epsilon$. Given that $f(x, y) + g(x,y) - v_{\gamma}(x, y)$ is always nonnegative, a natural approach is to consider the following penalized problem,
\begin{equation}\label{penprob}
	\min_{x \in X, y \in Y}\psi_{p_k}	(x, y) := \frac{1}{p_k}F(x, y) + f(x, y) + g(x,y) - v_{\gamma}(x, y),
\end{equation}
where $p_k$ is the penalty parameter at iteration $k$. Due to the nonsmoothness of $g(x,y)$ with respect to $y$ and the availability of its proximal operator, it is natural to adopt a proximal alternating linearization method \cite{bolte2014proximal} to construct the algorithm. Given the current iterate $(x^k, y^k)$, linearizing the smooth components of $\psi_{p_k}	(x, y)$ requires evaluating gradients, including $\nabla v_{\gamma}(x, y)$ as provided in \cref{grad_v}. Calculating the exact gradient  $\nabla v_{\gamma}(x^k, y^k)$ requires solving the proximal lower-level problem in \eqref{proximal_LL} to obtain $\theta_\gamma^*(x^k, y^k)$. However, this solution typically does not have a closed-form expression, and solving it requires an iterative optimization solver, which can be computationally expensive. To address this, we propose an inexact gradient approximation by substituting $\theta_{\gamma}^*$ with an inexact estimate $\theta^k$.

The proposed algorithm alternates between updating $y$ and $x$ while refining the inexact approximation of $\theta_\gamma^*(x^k, y^k)$. At each iteration $k=0,1,\ldots$, given the current iterate $(x^k, y^k)$, along with the feasibility-corrected iterate $\tilde{y}^k$ and corresponding $\tilde{\theta}^k$ (constructed from $y^k$ and  $\theta^{k}$ via the procedure detailed below), the next iterate $(x^{k+1}, y^{k+1})$ and the refined approximation $\theta^{k+1}$ are updated as follows.

{\bf Update $y$ (Fixing $x$).}
We compute the update direction for $y$,
\begin{equation}\label{dy}
	d_y^k:= \frac{1}{p_k} \nabla_y F\left(x^{k}, \tilde{y}^k\right)+\nabla_y f\left(x^{k}, \tilde{y}^k\right)-\frac{\tilde{y}^k-\tilde{\theta}^{k}}{\gamma},
\end{equation}
where $(\tilde{y}^k-\tilde{\theta}^{k})/{\gamma}$ is an inexact approximation of $\nabla_y v_{\gamma}(x^{k}, \tilde{y}^k) =( {\tilde{y}^k} -  \theta_{\gamma}^*(x^{k}, \tilde{y}^k))/ \gamma $ (see (\ref{valuefinc})). Using $	d_y^k$, the variable $y$ is updated as
\begin{equation}\label{update_y}
	y^{k+1} = \underset{y \in Y}{\mathrm{argmin}} ~ \left\langle d_y^k, y - \tilde{y}^{k} \right\rangle + g(x^k, y) + \frac{1}{2\beta_k} \left\| y - \tilde{y}^k \right\|^2, 
\end{equation}
where $\beta_k>0$ is the stepsize. 
This can also be expressed using the proximal operator
\begin{equation}\label{update_y2}
	y^{k+1} = \operatorname{Prox}_{\beta_k\tilde{g}(x^{k},\cdot)}\left(\tilde{y}^k-\beta_k d^k_y\right),
\end{equation}
where $\tilde{g}(x,y) := g(x,y) + \delta_Y(y)$. 

{\bf Find inexact approximation $\theta^{k+1/2}$.}
After updating $y^{k+1}$, we compute an inexact approximation $\theta^{k+1/2}$ for $\theta_{\gamma}^*(x^k, y^{k+1})$ by solving
\begin{equation}\label{theta_y}
	\theta^{k+1/2} \approx 	\underset{\theta \in Y}{\mathrm{argmin}} \left\{f(x^{k},\theta) + g(x^{k}, \theta) + \frac{1}{2\gamma}\left\|\theta-y^{k+1}\right\|^2\right\}, 
\end{equation}
subject to either an absolute or a relative inexactness criterion
\begin{equation}\label{inexacty1}
	\begin{aligned}
		\mathcal{G}(\theta^{k+1/2}, x^k,y^{k+1}) \le s_k, \quad \text{or} \quad \mathcal{G}(\theta^{k+1/2}, x^k,y^{k+1}) \le \tau_k \mathcal{G}(\theta^{k-1}, x^{k-1},y^{k-1}),
	\end{aligned}
\end{equation}
which can be compactly expressed as
\[
\mathcal{G}(\theta^{k+1/2}, x^{k}, y^{k+1})
\le
\max\left\{ s_k,\ \tau_k\,\mathcal{G}(\theta^{k-1}, x^{k-1}, y^{k-1}) \right\}.
\]
The parameters $s_k$ and $\tau_k$ control the solution's inexactness, and are required to satisfy $\sum_{k=0}^{\infty}s_k^2<\infty$ and $\tau_k:=\tau_0/(k+1)^{p_\tau}$ with constants $\tau_0 > 0$ and $p_\tau >0$. The prox-gradient residual $\mathcal{G}(\theta, x,y)$ quantifies the inexactness and is defined as
\[
\mathcal{G}(\theta, x,y):= 	\left\| \theta -\operatorname{Prox}_{\eta \tilde{g}\left(x, \cdot\right)}\left(\theta-\eta\left(\nabla_y f\left(x, \theta\right) + (\theta -y)/ {\gamma}\right)\right)\right\|,
\]
where $\eta > 0$. Note that $\mathcal{G}(\theta, x,y) = 0$ if and only if $\theta =\theta_{\gamma}^*(x,y)$. 

\begin{remark}
	A similar $\theta$-update based on solving the proximal subproblem \eqref{theta_y} is also employed by MEHA in \cite{liumoreau}. However, MEHA performs only a single proximal gradient step to update 
	$\theta$. In contrast, our method uses an inner loop governed by the inexactness criteria \eqref{inexacty1}. As we will show, this strategy yields the advantage of a larger and explicitly defined range for the step sizes.
\end{remark}

{\bf Update $x$ (Fixing $y$).}
We compute the update direction for $x$,
\begin{equation}\label{dx}
	d_x^k:= \frac{1}{p_k} \nabla_xF\left(x^k, y^{k+1}\right)+\nabla_x \varphi \left(x^k, y^{k+1}\right)-\nabla_x f\left(x^k, \theta^{k+1/2}\right)-\nabla_x g\left(x^k, \theta^{k+1/2}\right),
\end{equation}
where $\nabla_x f\left(x^k, \theta^{k+1/2}\right)+\nabla_x g\left(x^k, \theta^{k+1/2}\right)$ is an approximation to \( \nabla_x v_{\gamma}(x^k, y^{k+1}) = \nabla_x f(x^k, \theta^*_{\gamma}(x^k, y^{k+1}))+\nabla_x g(x^k, \theta^*_{\gamma}(x^k, y^{k+1}))\) (see \eqref{valuefinc}).
Using the direction $d_x^k$, we update $x$ with stepsize $\alpha_k>0$ as
\begin{equation}\label{update_x}
	x^{k+1}=\operatorname{Proj}_X\left(x^k-\alpha_k d_x^k\right).
\end{equation}

{\bf Find inexact approximation $\theta^{k+1}$.}
After updating $x^{k+1}$, a new inexact approximation $\theta^{k+1}$ of $\theta^*_{\gamma}(x^{k+1}, y^{k+1})$  is computed by approximately solving
\[
\theta^{k+1} \approx 	\underset{\theta \in Y}{\mathrm{argmin}} \left\{f(x^{k+1},\theta) + g(x^{k+1}, \theta) + \frac{1}{2\gamma}\left\|\theta-y^{k+1}\right\|^2\right\}, 
\]
such that $\theta^{k+1}$ satisfies either an absolute or a relative inexactness criterion
\begin{equation}\label{inexacty2}
	\begin{aligned}
		\mathcal{G}(\theta^{k+1}, x^{k+1},y^{k+1}) \le s_{k+1}, \quad \text{or} \quad  \mathcal{G}(\theta^{k+1}, x^{k+1},y^{k+1}) \le \tau_{k+1}\mathcal{G}(\theta^{k}, x^{k},y^{k}).
	\end{aligned}
\end{equation}

\begin{remark}
	The parameters $s_{k+1}$ and $\tau_{k+1}$ used in the inexact criteria \eqref{inexacty2} for determining $\theta^{k+1}$ can be chosen from sequences independent of those used for $\theta^{k+1/2}$, provided they satisfy the previously stated requirements.
\end{remark}

{\bf Penalty parameter update and feasibility correction.}
To encourage feasibility of the constraint $\varphi(x, y) - v_{\gamma}(x, y) \le \epsilon$, the penalty parameter $p_k$ is updated at each iteration.
Evaluating the constraint violation requires computing $ v_{\gamma}(x, y)$, which involves solving the proximal lower-level problem in \eqref{proximal_LL}. Instead, we use $\theta^{k+1}$ to estimate the constraint violation as
\begin{equation}\label{def_t}
	t^{k+1}=\max\left\{\varphi(x^{k+1},y^{k+1})-\varphi(x^{k+1},\theta^{k+1})-\frac{1}{2\gamma}\| \theta^{k+1} - y^{k+1}\|^2-\epsilon,0\right\}.
\end{equation}
The penalty parameter \(p_k\) is increased to $p_{k+1}=	p_k + \varrho_p$ (for some $\varrho_p > 0$) only when the change in the iterates, $\Delta_{k+1} := \left\| (x^{k+1}, y^{k+1}) - (x^k, \tilde{y}^k) \right\|$, is small relative to the penalty parameter (i.e., $\Delta_{k+1} < c_p/p_k$ for some $c_p > 0$), while the estimated constraint violation $t^{k+1}$ is large relative to this change (i.e., $t^{k+1} > \Delta_{k+1}/c_p$).
These two conditions can be compactly written as \begin{equation}\label{update_p_1}
	\Delta_{k+1} < c_p \min\left\{1/p_k,{t^{k+1}}\right\}.
\end{equation}

However,  simply increasing $p_{k}$ does not always guarantee feasibility. It can sometimes cause the algorithm to stagnate near an infeasible stationary point of the constraint function $\varphi(x, y) - v_{\gamma}(x, y)$. 
To mitigate this, we use the bilevel optimization structure, that the feasible region is the solution set of the convex lower-level problem \eqref{LL}. We introduce a {\it feasibility correction procedure} designed to help the iterates escape such stationary points.

The feasibility correction procedure aims to generate a corrected iterate $\tilde{y}^{k+1} $ from $y^{k+1}$. It is based on the observation that if $(x^{k+1},y^{k+1})$ is feasible, then $y^{k+1}$ should be a reasonably accurate solution to the lower-level problem \eqref{LL}  for $x= x^{k+1}$, implying that $\| \nabla_y v_{\gamma}(x^{k+1},y^{k+1})\| $ should be small. We use the magnitude of $( y^{k+1} - \theta^{k+1})/\gamma$ to approximate $\| \nabla_y v_{\gamma}(x^{k+1},y^{k+1})\| $. The feasibility correction  is triggered if:
\begin{equation}\label{theta_gap_condition}
	\left\| y^{k+1} - \theta^{k+1} \right\| \ge \frac{c_y \gamma}{p_k},
\end{equation}
where $c_y > 0$. Condition \eqref{theta_gap_condition} indicates that the iterate $(x^{k+1},y^{k+1})$ might be approaching an undesirable stationary point of the constraint function.

If condition \eqref{theta_gap_condition} holds, we attempt to correct $y^{k+1}$ by finding a candidate iterate $\tilde{y}^{k+1} $  as an inexact solution to the lower-level problem \eqref{LL} with $x= x^{k+1}$ satisfying
\begin{equation}\label{inexacty_y}
	\left\| \tilde{y}^{k+1} - \operatorname{Prox}_{ \tilde{g}(x^{k+1}, \cdot)} \left( \tilde{y}^{k+1} -  \nabla_y f(x^{k+1}, \tilde{y}^{k+1}) \right) \right\| \le \frac{c_{\tilde{y}} }{p_k} \Delta_{k+1}, 
\end{equation}
where $c_{\tilde{y}} > 0$. However, unconditionally accepting such a $\tilde{y}^{k+1} $ could adversely affect the algorithm's convergence. Therefore, to preserve convergence, $\tilde{y}^{k+1} $ is accepted only if it satisfies the following descent condition
\begin{equation}\label{decrease_new_y}
	\begin{aligned}
		&\frac{1}{p_k}F(x^{k+1}, \tilde{y}^{k+1}) + \varphi(x^{k+1}, \tilde{y}^{k+1}) - \left( \varphi(x^{k+1},  \tilde{\theta}^{k+1} ) + \frac{1}{2\gamma} \| \tilde{\theta}^{k+1} - \tilde{y}^{k+1} \|^2 \right)\\
		\le\; &
		\frac{1}{p_k}F(x^{k+1}, y^{k+1}) + \varphi(x^{k+1}, y^{k+1}) - \left( \varphi(x^{k+1}, \theta^{k+1}) + \frac{1}{2\gamma} \| \theta^{k+1} - y^{k+1} \|^2 \right),
	\end{aligned}
\end{equation}
where, $\tilde{\theta}^{k+1}$ is a new approximation of $\theta^*_{\gamma}(x^{k+1}, \tilde{y}^{k+1})$. We construct $\tilde{\theta}^{k+1}$ by first finding a point $\hat{\theta}^{k+1}$ that satisfies the inexactness criterion
\begin{equation}\label{inexacty3}
	\mathcal{G}(\hat{\theta}^{k+1}, x^{k+1},\tilde{y}^{k+1}) \le s_{k+1},
	\quad \text{or} \quad
	\mathcal{G}(\hat{\theta}^{k+1}, x^{k+1},\tilde{y}^{k+1})
	\le \tau_{k+1}\mathcal{G}(\theta^{k}, x^{k},y^{k}).
\end{equation}
and subsequently applying a single proximal-gradient step to $\hat{\theta}^{k+1}$
\[
\tilde{\theta}^{k+1}
=
\operatorname{Prox}_{\eta \tilde g(x^{k+1},\cdot)}
\left(
\hat{\theta}^{k+1}
-\eta\left(
\nabla_y f(x^{k+1},\hat{\theta}^{k+1})
+\frac{\hat{\theta}^{k+1}-\tilde y^{k+1}}{\gamma}
\right)
\right).
\]

If the descent condition \eqref{decrease_new_y} holds, the algorithm adopts the candidate  $\tilde{y}^{k+1} $ along with its corresponding $\tilde{\theta}^{k+1}$, and keep the penalty parameter unchanged, i.e., $p_{k+1}=	p_k$. Otherwise, the correction is discarded, meaning $\tilde{y}^{k+1} = y^{k+1}$ and $\tilde{\theta}^{k+1} = \theta^{k+1}$ and the penalty parameter is increased, i.e., $p_{k+1}=	p_k + \varrho_p$. The algorithm then proceeds with the selected pair $(\tilde{y}^{k+1}, \tilde{\theta}^{k+1})$. A detailed description of this feasibility correction procedure is presented in Algorithm \ref{alg2}.

\begin{remark}		
The additional computational cost of the feasibility correction step is modest compared with the main procedure of the algorithm. Indeed, the subproblems for $\tilde{y}^{k+1}$ and $\tilde{\theta}^{k+1}$ are instances of the lower-level problem~\eqref{LL} or its proximal variant \eqref{proximal_LL} and thus have the same structure and complexity as the $\theta^k$-update. An efficient choice for solving these subproblems is the proximal gradient method, which we employ in our numerical experiments.		
\end{remark}

With these components established, we are now ready to present the proposed AGILS algorithm in \cref{alg1}.

\begin{algorithm}[htp!]
\caption{Alternating Gradient-type algorithm with Inexact Lower-level Solutions (AGILS)}
\label{alg1}
\begin{algorithmic}[1]
	\STATE{{\bf Input}: Initial iterates $x^0,\theta^{-1}, y^0, \tilde{\theta}^0, \tilde{y}^0$, set $(x^{-1}, y^{-1}, \theta^{-1}) = (x^0, y^0, \tilde{\theta}^0)$, stepsizes $\alpha_k, \beta_k$, relaxation parameter $\epsilon$, proximal parameter $\gamma$, penalty parameters $p_{0}$,  $\varrho_p$, $c_p$, $c_y$, inexact parameters $\eta$, $s_{k}$, $\tau_k$, tolerance $tol$.}
	\FOR{$k=0,\ 1,\cdots$}
	\STATE{Construct the direction $d_y^k$ according to \eqref{dy}, and update $y^{k+1}$ as
		$$
		y^{k+1} = \operatorname{Prox}_{\beta_k\tilde{g}(x^{k,\cdot})}\left(\tilde{y}^k-\beta_k d^k_y\right).
		$$}
	
	\STATE{Find an inexact solution $\theta^{k+1/2}$ that satisfies the inexactness criterion \eqref{inexacty1}.}
	
	\STATE{Construct the direction $d_x^k$ according to \eqref{dx}, and update $x^{k+1}$ as 
		$$ x^{k+1}=\operatorname{Proj}_X\left(x^k-\alpha_k d_x^k\right). 
		$$}
	
	\STATE{Find an inexact solution $\theta^{k+1}$ that satisfies the inexactness criterion \eqref{inexacty2}.}
	
	\STATE{Calculate $\Delta_{k+1} := \left\| (x^{k+1}, y^{k+1}) - (x^k, \tilde{y}^k) \right\|$ and $t^{k+1}$ according to \eqref{def_t}. \\
		Stop when $k \ge 1$ and \(\max\{\Delta_{k+1}, s_{k}, t^{k+1} \} \le tol.\)
	}
	\STATE{Update penalty parameter and apply feasibility correction:}
	
	\STATE{\hspace*{10pt}\textbf{Case 1:}
		$\Delta_{k+1} \geq c_p \min\big\{1/p_k,\, t^{k+1} \big\}$ \\
		\vspace*{1pt}
		\qquad\qquad\quad Set \( p_{k+1} := p_k \), \( \tilde{y}^{k+1} := y^{k+1} \), \(\tilde{\theta}^{k+1} :=\theta^{k+1}\).}
	\vspace*{2pt}
	\STATE{\hspace*{10pt}\textbf{Case 2:} 
		$\Delta_{k+1} < c_p \min\big\{1/p_k,\, t^{k+1} \big\},\,\| y^{k+1} - \theta^{k+1} \| \le c_y \gamma/p_k$\\  
		\vspace*{1pt}
		\qquad\qquad\quad Set \( p_{k+1} := p_k + \varrho_p \), \( \tilde{y}^{k+1} := y^{k+1} \), \(\tilde{\theta}^{k+1} :=\theta^{k+1}\).}
	\vspace*{2pt}
	
	\STATE{\hspace*{10pt}\textbf{Case 3:} 
		$\Delta_{k+1} < c_p \min\big\{1/p_k,\, t^{k+1} \big\},\,\| y^{k+1} - \theta^{k+1} \| > c_y \gamma/p_k$\\
		\vspace*{1pt}
		\qquad\qquad\quad Apply the feasibility correction procedure (Algorithm~\ref{alg2}).}
\ENDFOR
\RETURN $(x^{k+1},\tilde{y}^{k+1})$
\end{algorithmic}
\end{algorithm}

\begin{algorithm}[htp!]
\caption{Feasibility Correction Procedure}
\label{alg2}
\begin{algorithmic}[1]

\STATE{{\bf Input}: Current iterates $x^{k+1}, y^{k+1}, \theta^{k+1}$, $\Delta_{k+1}$, proximal parameter $\gamma$, penalty parameters $p_{k}$, $\varrho_p$, inexact parameters $\eta$, $s_{k}$, $\tau_k$, constant $c_{\tilde{y}}$.}

\STATE{Compute a candidate $\tilde{y}^{k+1}$ by inexactly solving the lower-level problem \eqref{LL} with $x=x^{k+1}$  to satisfy \eqref{inexacty_y}.
}

\STATE{Find an intermediate point $\hat{\theta}^{k+1}$ that satisfies the inexactness criterion \eqref{inexacty3}, and compute $\tilde{\theta}^{k+1}=
\operatorname{Prox}_{\eta \tilde g(x^{k+1},\cdot)}
(
\hat{\theta}^{k+1}
-\eta(
\nabla_y f(x^{k+1},\hat{\theta}^{k+1})
+(\hat{\theta}^{k+1}-\tilde y^{k+1})/\gamma
)
)$.}

\IF{the descent condition \eqref{decrease_new_y} holds}
\STATE{Accept the candidate: Set $p_{k+1} := p_k$.}
\ELSE
\STATE{Reject the candidate: Set $\tilde{y}^{k+1} := y^{k+1}$, $\tilde{\theta}^{k+1} := \theta^{k+1}$ and $p_{k+1} := p_k + \varrho_p$.}
\ENDIF
\RETURN $(\tilde{y}^{k+1},\tilde{\theta}^{k+1},p_{k+1})$
\end{algorithmic}
\end{algorithm}

\vspace*{-4pt}
\section{Convergence properties}
In this section, we analyze the convergence of the proposed  AGILS for solving $({\rm VP})_{\gamma}^{\epsilon}$. Specifically, we establish the subsequential convergence of AGILS to the KKT points of the $({\rm VP})_\gamma^{\bar{\epsilon}}$ for some  $\bar{\epsilon}\le\epsilon$. 

\subsection{Preliminary results for the analysis}

We begin by considering the following lemma, which provides an estimate for the distance between the approximation $\theta$ and the exact solution $\theta^*_{\gamma}(x,y)$ of the proximal lower-level problem, based on the prox-gradient residual $\mathcal{G}(\theta, x,y)$ in the inexact criteria \eqref{inexacty1} and \eqref{inexacty2}.  This estimate is derived from the $1/\gamma$-strong convexity and $(1/\gamma + L_f)$-smoothness of the component $f(x, \theta) + \|\theta - y\|^2/(2\gamma)$ with respect to $\theta$ in the objective of the proximal lower level problem, see, e.g., \cite[Theorem 3.4 and 3.5]{drusvyatskiy2018error} or \cite[Proposition 2.2]{mordukhovich2023globally}.
\begin{lemma}\label{error_bound}
	Suppose Assumptions \ref{asup2} and \ref{asup3} hold, and let $\gamma > 0$, $\eta > 0$. Then, there exists $C_\eta > 0$ such that for any $(x,y) \in X \times \Re^m$, and $\theta \in \Re^m$, it holds that
	\[
	\|\theta - \theta^*_{\gamma}(x,y) \| \le C_\eta  \left\|\theta-\operatorname{Prox}_{\eta \tilde{g}\left(x, \cdot\right)}\left(\theta-\eta\left(\nabla_y f\left(x, \theta\right)+ (\theta-y)/{\gamma}\right)\right)\right\|,
	\]
	where $\theta^*_{\gamma}(x,y) := \arg\min_{\theta \in Y} \{ \varphi (x,\theta)+\frac{1}{2\gamma}\|\theta-y\|^2 \}$.
\end{lemma}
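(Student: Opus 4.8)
The plan is to establish this as a standard error bound for the prox-gradient residual of a composite convex optimization problem, exploiting the favorable structure of the smooth part $f(x,\theta) + \tfrac{1}{2\gamma}\|\theta - y\|^2$. Fix $(x,y) \in X \times \Re^m$ and write $\phi_{x,y}(\theta) := f(x,\theta) + \tfrac{1}{2\gamma}\|\theta - y\|^2$ for the smooth component and $\tilde g(x,\cdot)$ for the nonsmooth (convex, closed, proper) component, so that $\theta^*_\gamma(x,y)$ is the minimizer of $\phi_{x,y} + \tilde g(x,\cdot)$. Under Assumptions \ref{asup2} and \ref{asup3}, $\phi_{x,y}$ is $1/\gamma$-strongly convex in $\theta$ (the quadratic term supplies the modulus, and $f(x,\cdot)$ is convex by Assumption \ref{asup2}) and $L_\phi$-smooth in $\theta$ with $L_\phi = L_{f_y} + 1/\gamma$ (by Assumption \ref{asup3}(1)); crucially both constants are \emph{uniform in $(x,y)$}.

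First I would invoke the known error-bound result for strongly convex composite minimization: for a $\mu$-strongly convex, $L$-smooth $\phi$ and convex $\tilde g$, the prox-gradient map $\mathcal{G}_\eta(\theta) := \theta - \operatorname{Prox}_{\eta\tilde g}(\theta - \eta\nabla\phi(\theta))$ satisfies $\|\theta - \theta^*\| \le C_\eta \|\mathcal{G}_\eta(\theta)\|$ for all $\theta$, with a constant $C_\eta$ depending only on $\mu$, $L$, and the step $\eta$ — this is exactly \cite[Theorem 3.4, 3.5]{drusvyatskiy2018error} or \cite[Proposition 2.2]{mordukhovich2023globally}, as the excerpt already points out. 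Here $\mu = 1/\gamma$ and $L = L_{f_y} + 1/\gamma$, both independent of $(x,y)$, and $\nabla_\theta \phi_{x,y}(\theta) = \nabla_y f(x,\theta) + (\theta - y)/\gamma$ matches the gradient appearing inside the residual $\mathcal{G}(\theta,x,y)$. Hence the cited result yields precisely $\|\theta - \theta^*_\gamma(x,y)\| \le C_\eta \,\mathcal{G}(\theta,x,y)$ with a single $C_\eta$ valid for every $(x,y) \in X \times \Re^m$ and every $\theta \in \Re^m$.

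If I wanted to make the argument self-contained rather than purely citational, I would sketch the two-step chain underlying those theorems: (i) a \emph{proximal-PL / quadratic growth} inequality, $\tfrac{\mu}{2}\|\theta - \theta^*\|^2 \le (\phi+\tilde g)(\theta) - (\phi+\tilde g)(\theta^*)$, which follows directly from $\mu$-strong convexity of $\phi + \tilde g$; and (ii) a \emph{sufficient-decrease-type} bound relating the objective gap to the squared prox-gradient residual, using $L$-smoothness of $\phi$ and the three-point (prox) inequality for $\operatorname{Prox}_{\eta\tilde g}$, giving $(\phi+\tilde g)(\theta) - (\phi+\tilde g)(\theta^*) \le \tfrac{c}{\eta}\|\mathcal{G}_\eta(\theta)\|^2$ for an explicit $c$ depending on $\eta L$. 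Chaining (i) and (ii) produces $\|\theta - \theta^*\| \le C_\eta\|\mathcal{G}_\eta(\theta)\|$ with $C_\eta = \sqrt{2c/(\mu\eta)}$, e.g. $C_\eta$ on the order of $\sqrt{\gamma}\cdot\max\{1, \eta(L_{f_y}+1/\gamma)\}/\sqrt{\eta}$ up to absolute constants. Since $Y$ is a closed convex set absorbed into $\tilde g$ via the indicator, the constraint causes no difficulty.

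The only subtlety — and the main point worth checking carefully — is the \emph{uniformity} of $C_\eta$ over all $(x,y)$: this is not automatic from a generic error bound but holds here because the strong-convexity modulus $1/\gamma$ and the smoothness modulus $L_{f_y}+1/\gamma$ do not depend on the parameters $(x,y)$, only on the fixed data $\gamma$ and $L_{f_y}$. I would state this explicitly when applying the cited results. No other obstacle arises; the proof is short and essentially reduces to identifying the correct moduli and quoting the literature.
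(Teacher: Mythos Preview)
Your proposal is correct and follows essentially the same approach as the paper, which likewise identifies the $1/\gamma$-strong convexity and $(L_{f_y}+1/\gamma)$-smoothness of the smooth component and then cites \cite[Theorems 3.4, 3.5]{drusvyatskiy2018error} and \cite[Proposition 2.2]{mordukhovich2023globally} without further detail. Your explicit emphasis on the uniformity of the moduli in $(x,y)$ and the optional self-contained sketch are useful additions beyond what the paper records.
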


The next lemma provides important inequalities involving $v_\gamma$ and plays a crucial role in proving the decreasing property of the proposed algorithm. 
\begin{lemma}\label{lem9}
	Suppose Assumptions \ref{asup2} and \ref{asup3} hold. Let $\gamma \in (0, 1/(\rho_{f_2} +\rho_{g_2}) )$ and  $(\bar{x},\bar{y})\in X\times \Re^m$. Then, for any $(x,y) \in X \times \Re^m$, the following inequalities hold,
	\begin{align}
		-v_{\gamma}(x,\bar{y}) &\le -v_{\gamma}(\bar{x},\bar{y})-\langle \nabla_x v_{\gamma}(\bar{x},\bar{y}),x-\bar{x}\rangle +\frac{\rho_{f_1} + \rho_{g_1}}{2}\|\bar{x}-x\|^2,  \label{lem9_eq1} \\
		-v_{\gamma}(\bar{x},y) &\le -v_{\gamma}(\bar{x},\bar{y})-\langle \nabla_y v_{\gamma}(\bar{x},\bar{y}),y-\bar{y}\rangle. \label{lem9_eq2}
	\end{align}
\end{lemma}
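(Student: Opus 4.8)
The two inequalities are essentially statements about the concavity of $-v_\gamma$, applied separately in the $x$ and $y$ blocks. By Proposition~\ref{partial_grad_v}, for $\gamma \in (0, 1/(\rho_{f_1}+\rho_{g_1}))$ — I mean $\gamma \in (0, 1/(\rho_{f_2}+\rho_{g_2}))$ — the function $v_\gamma$ is $\rho_{v_1}$-weakly convex in $x$ with $\rho_{v_1} = \rho_{f_1}+\rho_{g_1}$ (taking the smallest admissible constant), convex in $y$, and differentiable on $X\times\mathbb{R}^m$ with gradient given by \eqref{valuefinc}. The plan is to translate each of these structural facts into the claimed first-order inequality.

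For \eqref{lem9_eq1}, fix $\bar y$ and work with the function $x \mapsto v_\gamma(x,\bar y)$ on $X$. Weak convexity with modulus $\rho_{f_1}+\rho_{g_1}$ means $x \mapsto v_\gamma(x,\bar y) + \tfrac{\rho_{f_1}+\rho_{g_1}}{2}\|x\|^2$ is convex on $X$; since $v_\gamma(\cdot,\bar y)$ is differentiable, the standard gradient inequality for this convex function at $\bar x$ gives, after rearranging and using $\nabla_x\big(\tfrac{\rho_{f_1}+\rho_{g_1}}{2}\|x\|^2\big) = (\rho_{f_1}+\rho_{g_1})x$, the estimate $v_\gamma(x,\bar y) \ge v_\gamma(\bar x,\bar y) + \langle \nabla_x v_\gamma(\bar x,\bar y), x-\bar x\rangle - \tfrac{\rho_{f_1}+\rho_{g_1}}{2}\|x-\bar x\|^2$. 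Negating both sides yields \eqref{lem9_eq1}. For \eqref{lem9_eq2}, fix $\bar x$ and use that $y \mapsto v_\gamma(\bar x, y)$ is convex and differentiable on $\mathbb{R}^m$; the gradient inequality $v_\gamma(\bar x, y) \ge v_\gamma(\bar x,\bar y) + \langle \nabla_y v_\gamma(\bar x,\bar y), y-\bar y\rangle$ negates directly to \eqref{lem9_eq2}.

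The only mild subtlety — and the place to be careful rather than an actual obstacle — is the quadratic-upper-bound/weak-convexity bookkeeping: one should confirm that the ``approximate secant'' form of weak convexity (as used in Example following Assumption~\ref{asup3}) is equivalent to the differential inequality above for a differentiable function, so that Proposition~\ref{partial_grad_v} can be invoked as stated. This is routine: for a differentiable $h$, $h + \tfrac{\rho}{2}\|\cdot\|^2$ convex $\iff$ $h(x) \ge h(\bar x) + \langle \nabla h(\bar x), x-\bar x\rangle - \tfrac{\rho}{2}\|x-\bar x\|^2$ for all $x,\bar x$. One also needs $X$ convex (Assumption in \eqref{eq1}) so the restriction to $X$ preserves convexity/weak convexity and the inequality holds for all $x \in X$; the $y$-block lives on all of $\mathbb{R}^m$ where no such caveat is needed. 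No separate argument is required for the cross terms because each inequality perturbs only one block while holding the other fixed, so $\nabla v_\gamma$ enters only through its relevant partial gradient — consistent with the decomposed form in \eqref{valuefinc}.
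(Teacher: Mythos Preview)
Your proposal is correct and follows essentially the same approach as the paper: both invoke Proposition~\ref{partial_grad_v} to get $(\rho_{f_1}+\rho_{g_1})$-weak convexity of $v_\gamma(\cdot,\bar y)$ on $X$ and convexity of $v_\gamma(\bar x,\cdot)$ on $\mathbb{R}^m$, then read off the first-order (sub)gradient inequalities and negate. The paper's proof is in fact even terser than yours, omitting the explicit quadratic bookkeeping you spell out.
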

\begin{proof}
	According to Proposition \ref{partial_grad_v}, for any $\bar{y} \in \Re^m$, $v_{\gamma}(x, \bar{y})$ is a $(\rho_{f_1} + \rho_{g_1})$-weakly convex function with respect to $x$ on $X$, that is, $v_{\gamma}(x,\bar{y})+\frac{\rho_{f_1} + \rho_{g_1}}{2}\|x\|^2$ is convex on $X$. Then we can obtain \eqref{lem9_eq1} immediately. The second inequality \eqref{lem9_eq2} follows from Proposition \ref{partial_grad_v} that for any $\bar{x} \in X$, $v_{\gamma}(\bar{x}, y)$ is convex with respect to $y$.
\end{proof}

We now proceed to establish a decreasing property for the proposed method based on the following merit function:
\begin{equation*}
	\tilde{\psi}_{p_{k}}(x,y) :=\frac{1}{p_k}(F(x,y)-\underline{F})+ f(x,y) + g(x,y)-v_{\gamma}(x,y).
\end{equation*}

Before establishing the main decreasing property, we first demonstrate that the descent condition \eqref{decrease_new_y}, imposed by the feasibility correction procedure (Algorithm \ref{alg2}), ensures a decrease in $\tilde{\psi}_{p_{k}}$ when moving from $y^{k+1}$ to the corrected iterate $\tilde{y}^{k+1} $.
\begin{lemma}\label{lem0}
	Suppose Assumptions \ref{asup2} and \ref{asup3} hold. Then, there exists $C_\gamma>0$,  such that the sequence $\{(x^k,y^k,\tilde{y}^k)\}$ generated by AGILS (\cref{alg1}) satisfies
	\begin{equation}\label{lem1_eq12}
		\tilde{\psi}_{p_k}(x^{k+1}, \tilde{y}^{k+1}) \le  \tilde{\psi}_{p_k}(x^{k+1}, y^{k+1}) + C_{\gamma} \zeta_{k+1}^2,
	\end{equation}
	where  $\zeta_{k} := \max\{ s_k,  ( \prod_{i=0}^{k} \tau_i ) \mathcal{G}(\theta^{0}, x^{0}, y^{0})\}$.
\end{lemma}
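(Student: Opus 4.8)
The plan is to split into the two cases of the feasibility correction procedure (Algorithm \ref{alg2}) and show the estimate \eqref{lem1_eq12} holds in each. In the rejection case we have $\tilde{y}^{k+1} = y^{k+1}$ and $\tilde{\theta}^{k+1} = \theta^{k+1}$, so $\tilde\psi_{p_k}(x^{k+1},\tilde y^{k+1}) = \tilde\psi_{p_k}(x^{k+1},y^{k+1})$ and the inequality is trivial (with any $C_\gamma \ge 0$). The substantive case is acceptance, where $\tilde y^{k+1}$ satisfies the descent condition \eqref{decrease_new_y}. The idea is that the right-hand side of \eqref{decrease_new_y} is exactly $\tilde\psi_{p_k}(x^{k+1},y^{k+1})$ up to replacing the exact Moreau value $v_\gamma(x^{k+1},y^{k+1})$ by its inexact surrogate $\varphi(x^{k+1},\theta^{k+1}) + \tfrac{1}{2\gamma}\|\theta^{k+1}-y^{k+1}\|^2$, and likewise the left-hand side equals $\tilde\psi_{p_k}(x^{k+1},\tilde y^{k+1})$ up to replacing $v_\gamma(x^{k+1},\tilde y^{k+1})$ by $\varphi(x^{k+1},\tilde\theta^{k+1}) + \tfrac{1}{2\gamma}\|\tilde\theta^{k+1}-\tilde y^{k+1}\|^2$ (recall $\underline F$ is a constant and cancels out of both sides since it appears in $\tilde\psi$ with the same $1/p_k$ factor on each side). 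So after rearranging, \eqref{decrease_new_y} gives
\[
\tilde\psi_{p_k}(x^{k+1},\tilde y^{k+1}) \le \tilde\psi_{p_k}(x^{k+1},y^{k+1}) + E_1 + E_2,
\]
where $E_1 := v_\gamma(x^{k+1},\tilde y^{k+1}) - \big(\varphi(x^{k+1},\tilde\theta^{k+1}) + \tfrac{1}{2\gamma}\|\tilde\theta^{k+1}-\tilde y^{k+1}\|^2\big) \le 0$ by definition of the Moreau envelope as an infimum, and $E_2 := \big(\varphi(x^{k+1},\theta^{k+1}) + \tfrac{1}{2\gamma}\|\theta^{k+1}-y^{k+1}\|^2\big) - v_\gamma(x^{k+1},y^{k+1}) \ge 0$ is the error from using the inexact $\theta^{k+1}$. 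Thus it remains to bound $E_2$ by $C_\gamma\zeta_{k+1}^2$.

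The main technical step is therefore to control $E_2$: the gap between the inexact surrogate value at $\theta^{k+1}$ and the true Moreau value $v_\gamma(x^{k+1},y^{k+1}) = \varphi(x^{k+1},\theta^*) + \tfrac{1}{2\gamma}\|\theta^*-y^{k+1}\|^2$, where $\theta^* := \theta^*_\gamma(x^{k+1},y^{k+1})$. Since the map $\theta \mapsto \varphi(x^{k+1},\theta) + \tfrac{1}{2\gamma}\|\theta - y^{k+1}\|^2$ is a sum of a $(1/\gamma + L_f)$-smooth term and a convex term, and $\theta^*$ is its minimizer over $Y$ with the smooth part $1/\gamma$-strongly convex, a standard quadratic-growth / descent-lemma argument bounds $E_2$ by a constant times $\|\theta^{k+1} - \theta^*\|^2$. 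Then Lemma \ref{error_bound} converts this into $C_\eta^2 \,\mathcal{G}(\theta^{k+1},x^{k+1},y^{k+1})^2$, and the inexactness criterion \eqref{inexacty2} satisfied by $\theta^{k+1}$ gives $\mathcal{G}(\theta^{k+1},x^{k+1},y^{k+1}) \le \max\{s_{k+1}, \tau_{k+1}\mathcal{G}(\theta^k,x^k,y^k)\}$; unrolling the relative bound down to $k=0$ yields $\mathcal{G}(\theta^{k+1},x^{k+1},y^{k+1}) \le \zeta_{k+1} = \max\{s_{k+1}, (\prod_{i=0}^{k+1}\tau_i)\,\mathcal{G}(\theta^0,x^0,y^0)\}$ (here one must check that whenever the relative branch is taken at some step it can be traced back through earlier relative branches or terminates at an $s_j$, which is absorbed into the max). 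Combining, $E_2 \le C_\gamma \zeta_{k+1}^2$ with $C_\gamma$ depending only on $\gamma, L_f, \eta$ through $C_\eta$ and the smoothness/strong-convexity constants, which is exactly what is needed.

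The step I expect to be the main obstacle is making the unrolling of the inexactness criterion into the single bound $\mathcal G(\theta^{k+1},x^{k+1},y^{k+1}) \le \zeta_{k+1}$ airtight: the criterion \eqref{inexacty2} is a \emph{max} of an absolute and a relative condition, so a careful induction is needed to show that the chain of relative bounds, possibly broken by absolute bounds at intermediate indices, is always dominated by $\zeta_{k+1}$ as defined; one also has to be slightly careful that $\tilde\theta^{k+1}$ (used on the left-hand side of \eqref{decrease_new_y}) enters only through the nonpositive term $E_1$, so its inexactness \eqref{inexacty3} does not actually need to be quantified here — only the sign $E_1 \le 0$ matters. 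The descent-lemma bound on $E_2$ in terms of $\|\theta^{k+1}-\theta^*\|^2$ is routine given the structure already spelled out before Lemma \ref{error_bound}, so I would state it briefly and cite the same references (\cite{drusvyatskiy2018error}, \cite{mordukhovich2023globally}).
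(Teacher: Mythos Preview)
Your overall plan is right, but you have the signs reversed in the rearrangement of \eqref{decrease_new_y}, and this flips which of $E_1$, $E_2$ must be bounded and which can be dropped. Writing $\mathrm{LHS}$ and $\mathrm{RHS}$ for the two sides of \eqref{decrease_new_y} and inserting $\pm v_\gamma$, one gets
\[
\mathrm{LHS} \;=\; \tilde{\psi}_{p_k}(x^{k+1},\tilde y^{k+1}) + \tfrac{\underline F}{p_k} + E_1,
\qquad
\mathrm{RHS} \;=\; \tilde{\psi}_{p_k}(x^{k+1}, y^{k+1}) + \tfrac{\underline F}{p_k} - E_2,
\]
so \eqref{decrease_new_y} yields
\[
\tilde{\psi}_{p_k}(x^{k+1},\tilde y^{k+1}) \;\le\; \tilde{\psi}_{p_k}(x^{k+1},y^{k+1}) \; + (-E_1) + (-E_2),
\]
not $+E_1+E_2$. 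Now $-E_2 \le 0$ follows from the infimum definition of $v_\gamma(x^{k+1},y^{k+1})$ (this is the harmless term), while $-E_1 = \big(\varphi(x^{k+1},\tilde\theta^{k+1}) + \tfrac{1}{2\gamma}\|\tilde\theta^{k+1}-\tilde y^{k+1}\|^2\big) - v_\gamma(x^{k+1},\tilde y^{k+1}) \ge 0$ is the term that must be bounded by $C_\gamma \zeta_{k+1}^2$. In particular, your remark that ``$\tilde\theta^{k+1}$ enters only through the nonpositive term $E_1$, so its inexactness \eqref{inexacty3} does not actually need to be quantified'' is exactly backwards: it is precisely the inexactness of $\tilde\theta^{k+1}$ (criterion \eqref{inexacty3}) that controls the error, while $\theta^{k+1}$ enters only through the sign of $-E_2$.

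Once the swap is corrected, the rest of your argument goes through unchanged: the PL/quadratic-growth bound you describe gives $-E_1 \le C_\gamma\, \mathcal{G}(\tilde\theta^{k+1},x^{k+1},\tilde y^{k+1})^2$ directly (the paper cites \cite{karimi2016linear} and \cite{beck2017first} for this), and \eqref{inexacty3} yields $\mathcal{G}(\tilde\theta^{k+1},x^{k+1},\tilde y^{k+1}) \le \zeta_{k+1}$ via the same unrolling you outline. The passage through Lemma~\ref{error_bound} and $\|\theta-\theta^*\|^2$ is not needed if one invokes the function-value-to-residual bound directly, as the paper does.
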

\begin{proof}
	At a given iteration $k$, if $\tilde{y}^{k+1} = y^{k+1}$, the inequality holds trivially.
	
	Now, consider the case where $\tilde{y}^{k+1} \neq y^{k+1}$. This implies that the feasibility correction procedure (Algorithm \ref{alg2}) is applied at iteration $k$. Accordingly, the corrected iterate  $\tilde{y}^{k+1}$ satisfies condition \eqref{decrease_new_y}, and its corresponding $\tilde{\theta}^{k+1}$ satisfies condition \eqref{inexacty3}. Condition \eqref{inexacty3} implies that $\mathcal{G}(\hat{\theta}^{k+1}, x^{k+1}, \tilde{y}^{k+1}) \le \zeta_{k+1}$.

	By convexity of the lower-level objective in \(\theta\), the function $f(x,\theta) + \tilde{g}(x,\theta) + \frac{1}{2\gamma}\|\theta-y\|^2$ is $1/\gamma$-strongly convex with respect to $\theta$ on $Y$. Thus, it satisfies a global quadratic growth condition and, by \cite[Proposition 2]{ye2021variational}, a global KL property with exponent $1/2$ on $Y$. Noting that $\tilde{g}(x,\theta) = g(x,\theta) + \delta_Y(\theta)$, we have $\tilde{g}(x,\theta) = g(x,\theta)$ for all $\theta \in Y$. This yields 
	\begin{equation}\label{KL}
	\begin{aligned}
		f(x,\theta)+g(x,\theta)+\frac{1}{2\gamma}\|\theta-y\|^2-v_\gamma(x,y)\le
		\frac{\gamma}{2}\,
		\operatorname{dist}\Bigl(
		0,\,
		\nabla_y f(x,\theta)+\frac{1}{\gamma}(\theta-y)+\partial \tilde g(x,\theta)
		\Bigr)^2.
	\end{aligned}
\end{equation}
The optimality condition of the proximal step defining $\tilde{\theta}^{k+1}$,
\[
\tilde{\theta}^{k+1}
=
\operatorname{Prox}_{\eta \tilde g(x^{k+1},\cdot)}
\left(
\hat{\theta}^{k+1}
-\eta\left(
\nabla_y f(x^{k+1},\hat{\theta}^{k+1})
+\frac{1}{\gamma}(\hat{\theta}^{k+1}-\tilde y^{k+1})
\right)
\right),
\]
yields
\[
-\nabla_y f(x^{k+1},\hat{\theta}^{k+1})
-\frac{1}{\gamma}(\hat{\theta}^{k+1}-\tilde y^{k+1})
-\frac{1}{\eta}(\tilde{\theta}^{k+1}-\hat{\theta}^{k+1})
\in
\partial \tilde g(x^{k+1},\tilde{\theta}^{k+1}).
\]
Adding $\nabla_y f(x^{k+1},\tilde{\theta}^{k+1})+(\tilde{\theta}^{k+1}-\tilde y^{k+1})/\gamma$ to both sides gives
\[
\begin{aligned}
	&\nabla_y f(x^{k+1},\tilde{\theta}^{k+1})
	+\frac{1}{\gamma}(\tilde{\theta}^{k+1}-\tilde y^{k+1}) -\nabla_y f(x^{k+1},\hat{\theta}^{k+1})
	-\frac{1}{\gamma}(\hat{\theta}^{k+1}-\tilde y^{k+1})
	-\frac{1}{\eta}(\tilde{\theta}^{k+1}-\hat{\theta}^{k+1}) \\
	\in \ &
	\nabla_y f(x^{k+1},\tilde{\theta}^{k+1})
	+\frac{1}{\gamma}(\tilde{\theta}^{k+1}-\tilde y^{k+1})
	+\partial \tilde g(x^{k+1},\tilde{\theta}^{k+1}).
\end{aligned}
\]
	By the $L_{f_y}$-Lipschitz continuity of $\nabla_y f(x,\cdot)$ and the definition of the gradient mapping $\mathcal{G}$, the norm of the left-hand element is bounded by
	\[
	\begin{aligned}
		&\quad\ 
		\Bigl\|
		\nabla_y f(x^{k+1},\tilde{\theta}^{k+1})
		-\nabla_y f(x^{k+1},\hat{\theta}^{k+1})
		+\frac{1}{\gamma}(\tilde{\theta}^{k+1}-\hat{\theta}^{k+1})
		+\frac{1}{\eta}(\hat{\theta}^{k+1}-\tilde{\theta}^{k+1})
		\Bigr\| \\
		&\le
		\Bigl(L_{f_y}+\frac{1}{\gamma}+\frac{1}{\eta}\Bigr)
		\|\tilde{\theta}^{k+1}-\hat{\theta}^{k+1}\| \\
		&=
		\Bigl(L_{f_y}+\frac{1}{\gamma}+\frac{1}{\eta}\Bigr)
		\mathcal G(\hat{\theta}^{k+1},x^{k+1},\tilde y^{k+1}),
	\end{aligned}
	\]
Therefore, by the definition of the distance from the origin to a set,
	$$
	\begin{aligned}
		&\operatorname{dist}\left(0, \nabla_y f(x^{k+1},\tilde{\theta}^{k+1}) + \frac{1}{\gamma}(\tilde{\theta}^{k+1}-\tilde{y}^{k+1}) + \partial \tilde{g}(x^{k+1},\tilde{\theta}^{k+1})\right) \\
		&\le \left(L_{f_y} + \frac{1}{\gamma} + \frac{1}{\eta}\right) \mathcal{G}(\hat{\theta}^{k+1}, x^{k+1}, \tilde{y}^{k+1}).
	\end{aligned}
	$$
	Applying the KL inequality \eqref{KL} with $(x^{k+1}, \tilde{y}^{k+1}, \tilde{\theta}^{k+1})$ and substituting the above estimate gives
	$$
	\begin{aligned}
		&f(x^{k+1},\tilde{\theta}^{k+1}) + g(x^{k+1},\tilde{\theta}^{k+1}) + \frac{1}{2\gamma}\|\tilde{\theta}^{k+1}-\tilde{y}^{k+1}\|^2 - v_\gamma(x^{k+1},\tilde{y}^{k+1}) \\
		&\le \frac{\gamma}{2} \left(L_{f_y} + \frac{1}{\gamma} + \frac{1}{\eta}\right)^2 \mathcal{G}(\hat{\theta}^{k+1}, x^{k+1}, \tilde{y}^{k+1})^2.
	\end{aligned}
	$$
	Letting $C_{\gamma} := \frac{\gamma}{2} (L_{f_y} + 1/\gamma + 1/\eta)^2$ and recalling $\varphi(x,\theta) = f(x,\theta) + g(x,\theta)$, this rearranges to:
	$$
	v_\gamma(x^{k+1},\tilde{y}^{k+1}) \ge \varphi(x^{k+1},\tilde{\theta}^{k+1}) + \frac{1}{2\gamma}\|\tilde{\theta}^{k+1}-\tilde{y}^{k+1}\|^2 - C_{\gamma}\zeta_{k+1}^2.
	$$

	By condition \eqref{decrease_new_y}, which $\tilde{y}^{k+1}$ satisfies, we have
	\begin{equation*}
		\begin{aligned}
			&\quad \tilde{\psi}_{p_k}(x^{k+1}, \tilde{y}^{k+1}) - \tilde{\psi}_{p_k}(x^{k+1}, y^{k+1}) \\
			& \le - v_\gamma(x^{k+1}, \tilde{y}^{k+1}) + \left( \varphi(x^{k+1},  \tilde{\theta}^{k+1} ) + \frac{1}{2\gamma} \| \tilde{\theta}^{k+1} - \tilde{y}^{k+1} \|^2 \right)\\
			& \quad +  v_\gamma(x^{k+1}, {y}^{k+1}) - \left( \varphi(x^{k+1}, \theta^{k+1}) + \frac{1}{2\gamma} \| \theta^{k+1} - y^{k+1} \|^2 \right) \\
			&\le C_{\gamma} \zeta_{k+1}^2,
		\end{aligned}
	\end{equation*}
	where the last inequality follows from the definition of $v_\gamma(x^{k+1}, {y}^{k+1})$ as an infimum $ \inf_{\theta\in Y} \{ \varphi(x^{k+1}, {\theta}) + \frac{1}{2\gamma} \| {\theta} - y^{k+1} \|^2 \}$. Thus, the conclusion follows.
\end{proof}

Next, we establish a key recursive inequality for  $\tilde{\psi}_{p_{k}}$, demonstrating a sufficient decrease property for the iterates generated by Algorithm \ref{alg1}.

\begin{lemma}\label{lem1}
	Suppose Assumptions \ref{asup1}, \ref{asup2} and \ref{asup3} hold. Let $\gamma \in (0, 1/(\rho_{f_2} +\rho_{g_2}) )$, $\alpha_k \in [0, 2/(L_{\psi_{x,k}} + c_\alpha)]$ and $\beta_k \in [0, 2/(L_{\psi_{y,k}} + c_\beta)]$, where $L_{\psi_{x,k}} = L_{F_x}/p_k+L_{f_x}+L_{g_1}+\rho_{f_1} + \rho_{g_1}$, $L_{\psi_{y,k}} = L_{F_y}/p_k+L_{f_y}$ and $c_\alpha, c_\beta$ are positive constants. Then, there exists $C_\eta , C_\gamma > 0$ such that the sequence $\{(x^k,y^k,\tilde{y}^k)\}$ generated by AGILS {\rm (}\cref{alg1}{\rm )} satisfies the following inequality,
	\begin{equation}\label{lem1_eq}
		\begin{aligned}
			\tilde{\psi}_{p_{k+1}}\left(x^{k+1}, \tilde{y}^{k+1}\right)\leq\,& \tilde{\psi}_{p_k}\left(x^k, \tilde{y}^k\right) +  \left( \frac{C_\eta^2}{c_\alpha}(L_{f_x} + L_{g_2})^2 + \frac{C_\eta^2}{c_\beta\gamma^2 } \right) \zeta_{k}^2 + C_{\gamma} \zeta_{k+1}^2\\
			&- \frac{c_\alpha}{4} \left\|x^{k+1}-x^k\right\|^2  -\frac{c_\beta}{4} \left\|y^{k+1}-\tilde{y}^k\right\|^2,
		\end{aligned}
	\end{equation}
	where  $\zeta_{k} := \max\{ s_k,  ( \prod_{i=0}^{k} \tau_i ) \mathcal{G}(\theta^{0}, x^{0}, y^{0})\}$ and $\{ \zeta_{k} \}$ is square summable.
\end{lemma}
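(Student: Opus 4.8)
The plan is to establish the sufficient decrease inequality \eqref{lem1_eq} by decomposing the one-step change of the merit function into three parts: the change from $(x^k,\tilde y^k)$ to $(x^{k+1},\tilde y^k)$ coming from the $x$-update \eqref{update_x}, the change from $(x^{k+1},\tilde y^k)$ to $(x^{k+1},y^{k+1})$ coming from the $y$-update \eqref{update_y}, and finally the change from $(x^{k+1},y^{k+1})$ to $(x^{k+1},\tilde y^{k+1})$ coming from the feasibility correction — the last of which is already controlled by Lemma~\ref{lem0}. A separate, easy bookkeeping step handles the penalty update: since $p_{k+1}\ge p_k$ and $F-\underline F\ge 0$, we have $\tilde\psi_{p_{k+1}}(x^{k+1},\tilde y^{k+1})\le \tilde\psi_{p_k}(x^{k+1},\tilde y^{k+1})$, so it suffices to prove the bound with $p_k$ throughout and then invoke this monotonicity.

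For the $y$-update, I would write $\tilde\psi_{p_k}(x^{k+1},y^{k+1})-\tilde\psi_{p_k}(x^{k+1},\tilde y^k)$ and bound it using: (i) the $L_{\psi_{y,k}}$-smoothness (descent lemma) of the smooth part $\frac1{p_k}F(x^{k+1},\cdot)+f(x^{k+1},\cdot)$ in $y$, where $L_{\psi_{y,k}}=L_{F_y}/p_k+L_{f_y}$; (ii) convexity of $v_\gamma(x^{k+1},\cdot)$ via \eqref{lem9_eq2} of Lemma~\ref{lem9}, which lets me replace $-v_\gamma(x^{k+1},y^{k+1})$ by $-v_\gamma(x^{k+1},\tilde y^k)-\langle\nabla_y v_\gamma(x^{k+1},\tilde y^k),y^{k+1}-\tilde y^k\rangle$; (iii) the optimality of $y^{k+1}$ in the proximal subproblem \eqref{update_y}, giving $\langle d_y^k,y^{k+1}-\tilde y^k\rangle+g(x^k,y^{k+1})-g(x^k,\tilde y^k)+\frac1{2\beta_k}\|y^{k+1}-\tilde y^k\|^2\le 0$. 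The crucial subtlety is that $d_y^k$ in \eqref{dy} uses the inexact quantity $(\tilde y^k-\tilde\theta^k)/\gamma$ in place of the true $\nabla_y v_\gamma(x^k,\tilde y^k)=(\tilde y^k-\theta^*_\gamma(x^k,\tilde y^k))/\gamma$, and also the gradients are evaluated at $x^k$ rather than $x^{k+1}$; the former discrepancy is $\frac1\gamma\|\tilde\theta^k-\theta^*_\gamma(x^k,\tilde y^k)\|\le \frac{C_\eta}{\gamma}\zeta_k$ by Lemma~\ref{error_bound}, and I absorb the resulting cross term by Young's inequality into $\frac{c_\beta}{4}\|y^{k+1}-\tilde y^k\|^2$ plus a $\frac{C_\eta^2}{c_\beta\gamma^2}\zeta_k^2$ remainder. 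Choosing $\beta_k\le 2/(L_{\psi_{y,k}}+c_\beta)$ makes $\frac1{2\beta_k}-\frac{L_{\psi_{y,k}}}{2}\ge \frac{c_\beta}{2}$, leaving a net coefficient of at least $\frac{c_\beta}{4}$ on $-\|y^{k+1}-\tilde y^k\|^2$ after the Young absorption.

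For the $x$-update I would do the analogous computation for $\tilde\psi_{p_k}(x^{k+1},\tilde y^k)-\tilde\psi_{p_k}(x^k,\tilde y^k)$, now using $L_{\psi_{x,k}}$-smoothness in $x$ of $\frac1{p_k}F(\cdot,y^{k+1})+\varphi(\cdot,y^{k+1})$ together with the weak-convexity bound \eqref{lem9_eq1} for $-v_\gamma(\cdot,\tilde y^k)$ (this contributes the $\rho_{f_1}+\rho_{g_1}$ term inside $L_{\psi_{x,k}}$), the optimality/projection inequality for $x^{k+1}$ in \eqref{update_x}, and the error-bound control $\|\theta^{k+1/2}-\theta^*_\gamma(x^k,y^{k+1})\|\le C_\eta\zeta_k$ from Lemma~\ref{error_bound} to handle the inexactness in $d_x^k$ from \eqref{dx}; the Lipschitz constants $L_{f_x},L_{g_2}$ enter because $\nabla_x f$ and $\nabla_x g$ are being evaluated at $\theta^{k+1/2}$ instead of $\theta^*_\gamma$, producing a discrepancy of size $(L_{f_x}+L_{g_2})C_\eta\zeta_k$. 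Young's inequality then yields the $\frac{C_\eta^2}{c_\alpha}(L_{f_x}+L_{g_2})^2\zeta_k^2$ term and the $-\frac{c_\alpha}{4}\|x^{k+1}-x^k\|^2$ descent, provided $\alpha_k\le 2/(L_{\psi_{x,k}}+c_\alpha)$. Summing the three pieces, adding the $C_\gamma\zeta_{k+1}^2$ from Lemma~\ref{lem0}, and applying the penalty monotonicity gives \eqref{lem1_eq}; the square-summability of $\{\zeta_k\}$ follows since $s_k$ is square-summable by assumption and $(\prod_{i=0}^k\tau_i)\,\mathcal{G}(\theta^0,x^0,y^0)\to 0$ geometrically fast (indeed faster, as $\tau_i=\tau_0/(i+1)^{p_\tau}\to 0$), so the tail is dominated. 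The main obstacle I anticipate is the careful tracking of which point ($x^k$ vs.\ $x^{k+1}$, $\tilde\theta^k$ vs.\ $\theta^*_\gamma$) each gradient in $d_y^k$ and $d_x^k$ is evaluated at, and ensuring every mismatch is either cancelled by a weak-convexity/convexity inequality or absorbed by a Young's inequality at a rate compatible with the claimed constants $c_\alpha,c_\beta$ — in particular verifying that no mismatch produces a term that cannot be controlled by $\|x^{k+1}-x^k\|^2$, $\|y^{k+1}-\tilde y^k\|^2$, or $\zeta_k^2$ alone.
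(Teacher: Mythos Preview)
Your overall strategy (descent-lemma + weak convexity for $-v_\gamma$ + optimality of the prox/projection + Lemma~\ref{error_bound} for the $\theta$-inexactness + Lemma~\ref{lem0} + penalty monotonicity) is exactly the paper's, but your \emph{decomposition order is reversed relative to the algorithm}, and this is not just bookkeeping. In AGILS, $y^{k+1}$ is produced first at $x=x^k$ (the prox in \eqref{update_y} uses $g(x^k,\cdot)$ and $d_y^k$ is built from gradients at $x^k$), and only then is $x^{k+1}$ produced at $y=y^{k+1}$ ($d_x^k$ is built from gradients at $y^{k+1}$). The paper therefore chains
\[
(x^k,\tilde y^k)\ \to\ (x^k,y^{k+1})\ \to\ (x^{k+1},y^{k+1})\ \to\ (x^{k+1},\tilde y^{k+1}),
\]
so that in each leg the ``frozen'' variable matches where the direction and the prox were actually evaluated. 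With this order the only mismatch between $d_y^k$ (resp.\ $d_x^k$) and $\nabla_y(\tilde\psi_{p_k}-g)(x^k,\tilde y^k)$ (resp.\ $\nabla_x\tilde\psi_{p_k}(x^k,y^{k+1})$) comes from $\tilde\theta^k$ vs.\ $\theta^*_\gamma(x^k,\tilde y^k)$ and $\theta^{k+1/2}$ vs.\ $\theta^*_\gamma(x^k,y^{k+1})$, giving precisely the $\frac{C_\eta}{\gamma}\zeta_k$ and $(L_{f_x}+L_{g_2})C_\eta\zeta_k$ errors.

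Your chain $(x^k,\tilde y^k)\to(x^{k+1},\tilde y^k)\to(x^{k+1},y^{k+1})$ forces you to compare $d_x^k$ (evaluated at $y^{k+1}$) against $\nabla_x\tilde\psi_{p_k}(x^k,\tilde y^k)$, and to compare the prox step in $g(x^k,\cdot)$ against a descent inequality written at $x^{k+1}$. You even note this yourself (``gradients are evaluated at $x^k$ rather than $x^{k+1}$''; and for the $x$-leg you write the difference at $\tilde y^k$ but invoke smoothness of $F(\cdot,y^{k+1})+\varphi(\cdot,y^{k+1})$). These mismatches produce extra cross terms of size $C\,\|x^{k+1}-x^k\|\cdot\|y^{k+1}-\tilde y^k\|$ and differences $g(x^{k+1},\cdot)-g(x^k,\cdot)$ that, after Young, cannot be absorbed under the stated step-size ranges $\alpha_k\le 2/(L_{\psi_{x,k}}+c_\alpha)$, $\beta_k\le 2/(L_{\psi_{y,k}}+c_\beta)$ with the stated constants $c_\alpha/4,\ c_\beta/4$. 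Two smaller points: (i) your optimality inequality for \eqref{update_y} should use the $1/\beta_k$-strong convexity of the subproblem, giving a $\frac{1}{\beta_k}\|y^{k+1}-\tilde y^k\|^2$ term (not $\frac{1}{2\beta_k}$); with only $\frac{1}{2\beta_k}$ your claimed bound $\frac{1}{2\beta_k}-\frac{L_{\psi_{y,k}}}{2}\ge\frac{c_\beta}{2}$ is false. (ii) Square-summability of $\zeta_k$ is established in the paper by the ratio test on $r_k=\big(\prod_{i=0}^k\tau_i\big)\mathcal G_0$; ``geometrically fast'' is not quite the right description since $\tau_i\to 0$, but your conclusion is correct. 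Fix the order of the decomposition to match the algorithm and the remaining steps you outline go through as in the paper.
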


\begin{proof}
	
	First, the update rule for \( y^{k+1} \) given in \eqref{update_y} defines it as the minimizer of the function $ g(x^k, y) + \langle d_y^k, y - \tilde{y}^{k} \rangle + \frac{1}{2\beta_k} \left\| y - \tilde{y}^k \right\|^2$. This function is $1/\beta_k$-strongly convex in $y$. By applying \cite[Theorem 6.39]{beck2017first}, we obtain
	\begin{equation}\label{lem1_eq5}
		g(x^{k}, y^{k+1}) + \langle d_y^k, y^{k+1}-\tilde{y}^k\rangle+\frac{1}{\beta_k}\|y^{k+1}-\tilde{y}^k\|^2 \le g(x^{k}, \tilde{y}^{k}).
	\end{equation}

	According to the assumptions, with $x^{k} \in X$, we have that $\nabla_y F(x^{k},y)$ and $\nabla_y f(x^{k},y)$ are $L_{F_y}$- and  $L_{f_y}$-Lipschitz continuous with respect to variable $y$ on $Y$, respectively, and combine this with \eqref{lem9_eq2}, we have
	\begin{equation}\label{lem1_eq6}
		\begin{aligned}
			&(\tilde{\psi}_{p_k}-g)(x^{k},y^{k+1})  \\
			\le \, & (\tilde{\psi}_{p_k}-g)	(x^{k},\tilde{y}^k) + \langle \nabla_y (\tilde{\psi}_{p_k}-g)(x^{k},\tilde{y}^k),y^{k+1}-\tilde{y}^k\rangle +\frac{L_{\psi_{y,k}}}{2}\|y^{k+1}-\tilde{y}^k\|^2,
		\end{aligned}
	\end{equation}
	where $L_{\psi_{y,k}}=L_{F_y}/p_k+L_{f_y}$. 
	Combining \eqref{lem1_eq5} and \eqref{lem1_eq6} yields
	\begin{equation}\label{lem1_eq7}
		\begin{aligned}
			\tilde{\psi}_{p_k}\left(x^{k}, y^{k+1}\right) \leq \,& 	\tilde{\psi}_{p_k}\left(x^{k}, \tilde{y}^k\right) + \langle \nabla_y (	\tilde{\psi}_{p_k}-g)(x^{k},\tilde{y}^k) - d_y^k, y^{k+1}-\tilde{y}^k\rangle  \\
			&-\left(\frac{1}{\beta_k}-\frac{L_{\psi_{y,k}}}{2}\right)\left\|y^{k+1}-\tilde{y}^k\right\|^2.
		\end{aligned}
	\end{equation}	
	Using the formula of \( \nabla_y v_\gamma \) in \eqref{valuefinc}, and the construction of $d_y^k$ in \eqref{dy}, we have
	\begin{equation}\label{lem1_eq11}
		\begin{aligned}
			\left\| \nabla_y (	\tilde{\psi}_{p_k}-g)(x^{k},\tilde{y}^k) - d_y^k \right\| & = \| \nabla_y v_\gamma(x^k, \tilde{y}^k) + (\tilde{y}^k-\theta^{k})/\gamma\| \\
			& = \frac{1}{\gamma}\left\| \theta^*_{\gamma}(x^{k},\tilde{y}^k) - \theta^{k} \right\| \le \frac{C_\eta}{\gamma} \mathcal{G}({\theta}^{k}, x^{k}, \tilde{y}^{k}) \le \frac{C_\eta}{\gamma} \zeta_{k},
		\end{aligned}
	\end{equation}
	where the first inequality follows from Lemma \ref{error_bound} and the second inequality is a consequence of the inexactness condition \eqref{inexacty2}. 
	
	Combining the above inequality with \eqref{lem1_eq7} gives us
	\begin{equation}\label{lem1_eq8}
		\tilde{\psi}_{p_k}\left(x^{k}, y^{k+1}\right) \leq \,  	\tilde{\psi}_{p_k}\left(x^{k}, \tilde{y}^k\right)  - \left(\frac{1}{\beta_k}-\frac{L_{\psi_{y,k}}}{2}- \frac{c_\beta}{4}\right)\left\|y^{k+1}-\tilde{y}^k\right\|^2 + \frac{C_\eta^2}{c_\beta\gamma^2} \zeta_{k}^2.
	\end{equation}
	Next, considering the update rule for $x^{k+1}$ given in \eqref{update_x}, we have
	\begin{equation}\label{lem1_eq1}
		\langle d_x^k, x^{k+1}-x^k\rangle+\frac{1}{\alpha_k}\|x^{k+1}-x^k\|^2 \le 0.
	\end{equation}	
	By the assumptions, with $y^{k+1} \in Y$, we have that $\nabla_x F(x, y^{k+1})$, $\nabla_x f(x, y^{k+1})$ and $\nabla_{x} g(x, y^{k+1})$ are $L_{F_x}$-, $L_{f_x}$-, and $L_{g_1}$-Lipschitz continuous with respect to variable $x$ on $X$, respectively, and combine this with \eqref{lem9_eq1}, we have
	\begin{equation}\label{lem1_eq2}
		\tilde{\psi}_{p_k}(x^{k+1}, y^{k+1})\le  \tilde{\psi}_{p_k}	(x^{k}, y^{k+1})+\langle \nabla_x 	\tilde{\psi}_{p_k}(x^{k}, y^{k+1}),x^{k+1}-x^k\rangle+\frac{L_{\psi_{x,k}}}{2}\|x^{k+1}-x^k\|^2,
	\end{equation}
	where $L_{\psi_{x,k}}=L_{F_x}/p_k+L_{f_x}+L_{g_1}+ \rho_{f_1} + \rho_{g_1} $. Combining \eqref{lem1_eq1} and \eqref{lem1_eq2} yields
	\begin{equation}\label{lem1_eq3}
		\begin{aligned}
			\tilde{\psi}_{p_k}\left(x^{k+1}, y^{k+1}\right) \leq \,& \tilde{\psi}_{p_k}\left(x^k, y^{k+1}\right) + \langle \nabla_x \tilde{\psi}_{p_k}(x^{k},y^{k+1}) - d_x^k,x^{k+1}-x^k\rangle  \\
			&-\left(\frac{1}{\alpha_k}-\frac{L_{\psi_{x,k}}}{2}\right)\left\|x^{k+1}-x^k\right\|^2.
		\end{aligned}
	\end{equation}
	Using the formula of the gradient of $v_\gamma$ given in \eqref{valuefinc} and the construction of $d_x^k$ in \eqref{dx}, and the $L_{f_x}$- and $L_{g_2}$-Lipschitz continuity of $\nabla_x f(x^k,y)$ and $\nabla_{x} g(x^k,y)$ with respect to variable $y$ on $Y$, we have
	\begin{equation}\label{lem1_eq10}
		\begin{aligned}
			\left\| \nabla_x  \tilde{\psi}_{p_k}(x^{k},y^{k+1}) - d_x^k \right\| 
			&= \left\| \nabla_{x} \varphi(x^k, \theta^*_{\gamma}(x^k,y^{k+1})) - \nabla_{x} \varphi(x^k, \theta^{k+1/2}) \right\| \\
			& \le (L_{f_x} + L_{g_2})\left\| \theta^*_{\gamma}(x^k,y^{k+1}) - \theta^{k+1/2} \right\| \le (L_{f_x} + L_{g_2})C_\eta \zeta_{k},
		\end{aligned}
	\end{equation}
	where the last inequality follows from \cref{error_bound} and the inexact criterion \eqref{inexacty1}. Combining the above inequality with \eqref{lem1_eq3} gives us
	\begin{equation}\label{lem1_eq4}
		\begin{aligned}
			\tilde{\psi}_{p_k}\left(x^{k+1}, y^{k+1}\right) \leq \,& \tilde{\psi}_{p_k}\left(x^k, y^{k+1}\right) - \left(\frac{1}{\alpha_k}-\frac{L_{\psi_{x,k}}}{2}-\frac{c_\alpha}{4}\right)\left\|x^{k+1}-x^k\right\|^2 \\ & + \frac{1}{c_\alpha}(L_{f_x} + L_{g_2})^2C_\eta^2 \zeta_{k}^2.
		\end{aligned}
	\end{equation}
	Combining  \eqref{lem1_eq8}, \eqref{lem1_eq4}, and then  \eqref{lem1_eq12} established in  Lemma \ref{lem0}, we obtain
	\begin{equation}\label{lem1_eq9}
		\begin{aligned}
			\tilde{\psi}_{p_k}\left(x^{k+1}, \tilde{y}^{k+1}\right)\leq\,& 	\tilde{\psi}_{p_k}\left(x^k, \tilde{y}^k\right)+  \frac{1}{c_\alpha}(L_{f_x} + L_{g_2})^2C_\eta^2 \zeta_{k}^2 + \frac{C_\eta^2}{c_\beta\gamma^2} \zeta_{k}^2 + C_{\gamma} \zeta_{k+1}^2\\
			&\hspace{-50pt}-\left(\frac{1}{\alpha_k}-\frac{L_{\psi_{x,k}}}{2}- \frac{c_\alpha}{4} \right)\left\|x^{k+1}-x^k\right\|^2 -\left(\frac{1}{\beta_k}-\frac{L_{\psi_{y,k}}}{2}- \frac{c_\beta}{4} \right)\left\|y^{k+1}-\tilde{y}^k\right\|^2.
		\end{aligned}
	\end{equation}
	Because  $\alpha_k \in [0, 2/(L_{\psi_{x,k}} + c_\alpha)]$ and $\beta_k \in [0, 2/(L_{\psi_{y,k}} + c_\beta)]$, and $\tilde{\psi}_{p_{k+1}}(x^{k+1}, \tilde{y}^{k+1}) \leq 	\tilde{\psi}_{p_k}(x^{k+1}, \tilde{y}^{k+1})$ following from the non-decreasing property of the penalty parameter $p_k$ and the non-negative property of $F-\underline{F}$, we can obtain \eqref{lem1_eq} from \eqref{lem1_eq9}.
	
	Finally, we show that \( \{\zeta_k\} \) is square summable. Let $ \mathcal{G}_0 = \mathcal{G}(\theta^{0}, x^{0}, y^{0})$ and $r_k :=  ( \prod_{i=0}^{k} \tau_i )\mathcal{G}_0 $. Since $\zeta_{k} := \max\{ s_k, r_k\}$, we have $\sum_{k=0}^\infty \zeta_k^2 \le \sum_{k=0}^\infty s_k^2 + \sum_{k=0}^\infty r_k^2$. By assumption, \( \sum_{k=0}^\infty s_k^2 < \infty \). Thus, we only need to show that  \( \{r_k\} \) is square summable. Recall that \( \tau_k = \tau_0/(k+1)^{p_\tau} \) with \( p_\tau > 0 \), then
	\[
	r_k = \left( \prod_{i=0}^{k} \tau_i \right) \mathcal{G}_0  = \tau_0^{k+1} \left( \prod_{i=1}^{k+1} \frac{1}{i^{p_\tau}} \right) \cdot \mathcal{G}_0  = \frac{\tau_0^{k+1}}{((k+1)!)^{p_\tau}}\mathcal{G}_0 .
	\]
	For the ratio test on  $\sum_{k=0}^\infty r_k^2$, we consider 
	\[
	\lim_{k \rightarrow \infty}\frac{r_{k+1}^2}{r_k^2} =  \lim_{k \rightarrow \infty} \frac{\tau_0^2}{(k+2)^{2p_\tau}}  = 0.
	\]
	Therefore, by the ratio test, $\sum_{k=0}^\infty r_k^2 < \infty$, implying that \( \{\zeta_k\} \) is square summable.
\end{proof}

\subsection{Analysis of the feasibility correction }\label{sec4.2}
Throughout this subsection and the remainder of this section, we assume that Assumptions \ref{asup1}, \ref{asup2}, and \ref{asup3} hold, and $\gamma \in (0, 1/(\rho_{f_2} +\rho_{g_2}) )$. The sequences $\{(x^k, y^k, \tilde{y}^k)\}$ and $\{p_k\}$ 
are generated by AGILS (\cref{alg1}), where the step sizes satisfy $\alpha_k \in [\underline{\alpha}, 2/(L_{\psi_{x,k}} + c_\alpha)]$ and $\beta_k \in [\underline{\beta}, 2/(L_{\psi_{y,k}} + c_\beta)]$, where $L_{\psi_{x,k}} = L_{F_x}/p_k+L_{f_x}+L_{g_1}+\rho_{f_1} + \rho_{g_1}$, $L_{\psi_{y,k}} = L_{F_y}/p_k+L_{f_y}$ and $\underline{\alpha}, \underline{\beta}, c_\alpha, c_\beta$ are positive constants. The inexactness parameters satisfy $\sum_{k=0}^{\infty}s_k^2 < \infty$ and 
$\tau_k := \tau_0/(k+1)^{p_\tau}$ with $\tau_0 > 0$ and $p_\tau >0$.

In this subsection, we show that the feasibility correction procedure (Algorithm \ref{alg2}) enforces the feasibility of the generated iterates.
We begin by showing that the differences between consecutive iterates are square summable.

\begin{lemma}\label{lem3}
	The sequence $\left\{(x^k,y^k,\tilde{y}^k)\right\}$ satisfies
	\begin{equation*}
		\sum\limits_{k=0}^{{\infty}}\left\|(x^{k+1},y^{k+1})-(x^k,\tilde{y}^k)\right\|^2<\infty, \quad \text{and} \quad \lim\limits_{k\to\infty}\left\|(x^{k+1},y^{k+1})-(x^k,\tilde{y}^k)\right\|=0.
	\end{equation*}
\end{lemma}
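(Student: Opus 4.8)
The plan is to telescope the sufficient-decrease recursion \eqref{lem1_eq} from Lemma \ref{lem1}. Fixing $N \ge 0$ and summing \eqref{lem1_eq} over $k = 0, \dots, N$, the merit-function terms telescope (the left-hand side at index $k$ is exactly the $\tilde{\psi}_{p_{k+1}}(x^{k+1},\tilde{y}^{k+1})$ appearing on the right-hand side at index $k+1$), which yields, after setting $C := C_\eta^2(L_{f_x}+L_{g_2})^2/c_\alpha + C_\eta^2/(c_\beta\gamma^2) + C_\gamma$ and using $\sum_{k=0}^N \zeta_{k+1}^2 \le \sum_{k=0}^{N+1}\zeta_k^2$,
\[
\frac{c_\alpha}{4}\sum_{k=0}^N \|x^{k+1}-x^k\|^2 + \frac{c_\beta}{4}\sum_{k=0}^N \|y^{k+1}-\tilde{y}^k\|^2 \;\le\; \tilde{\psi}_{p_0}(x^0,\tilde{y}^0) - \tilde{\psi}_{p_{N+1}}(x^{N+1},\tilde{y}^{N+1}) + C\sum_{k=0}^{N+1}\zeta_k^2 .
\]

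The key remaining point is that the right-hand side is bounded above uniformly in $N$. First, $\tilde{\psi}_{p_{N+1}}(x^{N+1},\tilde{y}^{N+1}) \ge 0$: by Assumption \ref{asup1}, $F - \underline{F} \ge 0$, and $\varphi(x,y) - v_\gamma(x,y) \ge 0$ because $v_\gamma(x,y) = \inf_{\theta\in Y}\{\varphi(x,\theta) + \|\theta-y\|^2/(2\gamma)\} \le \varphi(x,y)$ by taking $\theta = y$; hence this nonnegative term may simply be dropped, only strengthening the bound. Second, $\{\zeta_k\}$ is square summable, as established in Lemma \ref{lem1}, so $\sum_{k=0}^\infty\zeta_k^2 < \infty$. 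Together with the finiteness of $\tilde{\psi}_{p_0}(x^0,\tilde{y}^0)$, this gives a finite upper bound $M$, independent of $N$, for the left-hand side.

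Letting $N \to \infty$ and using $c_\alpha, c_\beta > 0$, we conclude $\sum_{k=0}^\infty \|x^{k+1}-x^k\|^2 < \infty$ and $\sum_{k=0}^\infty\|y^{k+1}-\tilde{y}^k\|^2 < \infty$; adding these and using $\|(x^{k+1},y^{k+1}) - (x^k,\tilde{y}^k)\|^2 = \|x^{k+1}-x^k\|^2 + \|y^{k+1}-\tilde{y}^k\|^2$ gives the first claim, and convergence of the series forces its general term to $0$, which is the second claim. The argument is essentially bookkeeping once Lemma \ref{lem1} is available; the only genuinely load-bearing observation — and thus the closest thing to an obstacle — is the uniform lower bound $\tilde{\psi}_{p_k} \ge 0$, which is precisely why $\underline{F}$ is subtracted in the definition of the merit function and why nonnegativity of the Moreau-envelope gap $\varphi - v_\gamma$ is needed.
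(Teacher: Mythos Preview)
Your proof is correct and follows essentially the same approach as the paper: telescope the recursion \eqref{lem1_eq}, drop the nonnegative term $\tilde{\psi}_{p_{N+1}}(x^{N+1},\tilde{y}^{N+1})$, and invoke the square summability of $\{\zeta_k\}$ from Lemma~\ref{lem1}. Your explicit justification of $\tilde{\psi}_{p_k}\ge 0$ via $F-\underline{F}\ge 0$ and $\varphi - v_\gamma \ge 0$ is exactly the reason the paper's merit function is defined that way.
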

\begin{proof}
	Summing  \cref{lem1_eq} from Lemma \ref{lem1} over $k=0,1,\dots,K-1$ yields,
	\begin{equation}\label{lem3_eq1}
		\begin{aligned}
			\tilde{\psi}_{p_{K}}\left(x^{K}, \tilde{y}^{K}\right)- \tilde{\psi}_{p_{0}}\left(x^{0}, \tilde{y}^{0}\right) \leq & -\sum\limits_{k=0}^{K-1} \left(  \frac{c_\alpha}{4} \left\|x^{k+1}-x^k\right\|^2+ \frac{c_\beta}{4} \left\|y^{k+1}-\tilde{y}^k\right\|^2\right)\\& +\sum\limits_{k=0}^{K-1}\left( \frac{C_\eta^2 }{c_\alpha}(L_{f_x} + L_{g_2})^2+ \frac{C_\eta^2}{c_\beta\gamma^2} \right) \zeta_{k}^2+ \sum\limits_{k=0}^{K-1}C_{\gamma} \zeta_{k+1}^2.\\
		\end{aligned}
	\end{equation}
	Since \( \{\zeta_k\} \) is square summable from Lemma \ref{lem1}, it follows that
	\[
	\sum\limits_{k=0}^{\infty} \left( \frac{1}{c_\alpha}(L_{f_x} + L_{g_2})^2 + \frac{1}{c_\beta\gamma^2} \right) C_\eta^2\zeta_{k}^2 + \sum\limits_{k=0}^{\infty}  C_{\gamma} \zeta_{k+1}^2 < \infty.
	\]
	Additionally, because $\tilde{\psi}_{p_{K}}\left(x^{K}, \tilde{y}^{K}\right) \ge 0$, by taking $K \rightarrow \infty$ in \eqref{lem3_eq1}, we have
	\begin{equation}\label{lem3_eq2}
		\begin{aligned}
			&\quad\,\sum\limits_{k=0}^{\infty} \left(  \frac{c_\alpha}{4} \left\|x^{k+1}-x^k\right\|^2+ \frac{c_\beta}{4} \left\|y^{k+1}-\tilde{y}^k\right\|^2\right) \\
			&\le \tilde{\psi}_{p_{0}}\left(x^{0}, \tilde{y}^{0}\right) + \sum\limits_{k=0}^{\infty} \left( \frac{1}{c_\alpha}(L_{f_x} + L_{g_2})^2 + \frac{1}{c_\beta\gamma^2} \right) C_\eta^2\zeta_{k}^2 + \sum\limits_{k=0}^{\infty}  C_{\gamma} \zeta_{k+1}^2  < \infty,
		\end{aligned}
	\end{equation}
	which proves the result.
\end{proof}

The following lemma shows that any accumulation point of the corrected iterates $\tilde{y}^k$ satisfying condition \eqref{inexacty_y} lies in the solution set of the lower-level problem. 

\begin{lemma}\label{lem4}
	Let $\{ (x^{k_j},\tilde{y}^{k_j}) \} $ be any convergent subsequence of iterates $\{(x^k,\tilde{y}^k)\}$ such that $(x^{k_j},\tilde{y}^{k_j})$ satisfies condition \eqref{inexacty_y} with $k = k_j - 1$. 
	If $(x^{k_j},\tilde{y}^{k_j}) \rightarrow (\bar{x},\bar{y}) $ as $j \rightarrow \infty$ for some $(\bar{x},\bar{y}) $, then $\bar{y} \in S(\bar{x})$. 
\end{lemma}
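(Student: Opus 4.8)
The plan is to pass to the limit in the inexactness condition \eqref{inexacty_y} along the subsequence and then invoke the prox–gradient fixed-point characterization of the lower-level optimal set. Writing $k_j-1$ for the index at which \eqref{inexacty_y} is imposed, set
\[
z^{k_j} := \tilde{y}^{k_j} - \nabla_y f(x^{k_j}, \tilde{y}^{k_j}), \qquad w^{k_j} := \operatorname{Prox}_{\tilde{g}(x^{k_j},\cdot)}\big(z^{k_j}\big),
\]
so that \eqref{inexacty_y} with $k = k_j - 1$ reads $\|\tilde{y}^{k_j} - w^{k_j}\| \le (c_{\tilde{y}}/p_{k_j-1})\,\Delta_{k_j}$. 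First I would observe that the right-hand side vanishes: the penalty parameter is non-decreasing with $p_0 > 0$, hence $p_{k_j-1} \ge p_0 > 0$, while $\Delta_{k_j} \to 0$ by Lemma \ref{lem3}. Therefore $\|\tilde{y}^{k_j} - w^{k_j}\| \to 0$, and since $\tilde{y}^{k_j} \to \bar{y}$ this yields $w^{k_j} \to \bar{y}$; moreover $\bar{y} \in Y$ because $Y$ is closed and $\tilde{y}^{k_j} \in Y$. By the joint continuity of $\nabla_y f$ on $X \times Y$ (Assumption \ref{asup3}(1)), we also have $z^{k_j} \to \bar{z} := \bar{y} - \nabla_y f(\bar{x}, \bar{y})$.

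The key step is to pass to the limit in the variational characterization of $w^{k_j}$. By definition of the proximal mapping of $\tilde{g}(x^{k_j},\cdot) = g(x^{k_j},\cdot) + \delta_Y$, one has $w^{k_j} \in Y$ and, for every $\theta \in Y$,
\[
g(x^{k_j}, w^{k_j}) + \tfrac{1}{2}\|w^{k_j} - z^{k_j}\|^2 \le g(x^{k_j}, \theta) + \tfrac{1}{2}\|\theta - z^{k_j}\|^2 .
\]
Letting $j \to \infty$, using $x^{k_j} \to \bar{x}$, $w^{k_j} \to \bar{y}$, $z^{k_j} \to \bar{z}$ and the continuity of $g$ on $X \times Y$ (Assumption \ref{asup3}(2)), we obtain that for every $\theta \in Y$,
\[
g(\bar{x}, \bar{y}) + \tfrac{1}{2}\|\bar{y} - \bar{z}\|^2 \le g(\bar{x}, \theta) + \tfrac{1}{2}\|\theta - \bar{z}\|^2 ,
\]
which says precisely $\bar{y} = \operatorname{Prox}_{\tilde{g}(\bar{x},\cdot)}(\bar{z}) = \operatorname{Prox}_{\tilde{g}(\bar{x},\cdot)}\big(\bar{y} - \nabla_y f(\bar{x}, \bar{y})\big)$.

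Finally, this fixed-point identity is equivalent to $-\nabla_y f(\bar{x}, \bar{y}) \in \partial_y g(\bar{x}, \bar{y}) + \mathcal{N}_Y(\bar{y})$ (using the convex subdifferential sum rule, valid since $g(\bar{x},\cdot)$ is finite and continuous on $Y$), i.e. $0 \in \nabla_y f(\bar{x}, \bar{y}) + \partial_y g(\bar{x}, \bar{y}) + \mathcal{N}_Y(\bar{y})$, which by Assumption \ref{asup2} (convexity of $f(\bar{x},\cdot)$ and $g(\bar{x},\cdot)$ on the closed convex set $Y$) is exactly the optimality condition for $\min_{y \in Y} f(\bar{x}, y) + g(\bar{x}, y)$; hence $\bar{y} \in S(\bar{x})$. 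The only delicate point is the limit passage in the two-sided prox inequality, and the merit of this route is that it needs nothing more than the joint continuity of $g$ and $\nabla_y f$ on $X \times Y$ — no Lipschitz-type continuity of the proximal operator with respect to $x$ is required, unlike the assumption discussed in the remark following Assumption \ref{asup3}.
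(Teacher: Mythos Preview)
Your proof is correct and follows essentially the same route as the paper: both show that the prox--gradient residual in \eqref{inexacty_y} vanishes along the subsequence (via Lemma~\ref{lem3} and $p_{k_j-1}\ge p_0$) and then pass to the limit to recover the first-order optimality condition $0 \in \nabla_y f(\bar{x},\bar{y}) + \partial_y \tilde{g}(\bar{x},\bar{y})$. The only cosmetic difference is in the limit passage: the paper writes the prox optimality as a subdifferential inclusion and invokes outer semicontinuity of $\partial_y \tilde{g}$, whereas you pass through the defining variational inequality of the proximal map using only joint continuity of $g$---a slightly more self-contained variant that avoids checking outer semicontinuity of the subdifferential in the $x$-variable.
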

\begin{proof}
	Define the residual term $\tilde{r}^k := \tilde{y}^{k} - \operatorname{Prox}_{ \tilde{g}(x^{k}, \cdot)} \left( \tilde{y}^{k} -  \nabla_y f(x^{k}, \tilde{y}^{k}) \right) $. By the first-order optimality condition for the proximal operator, $\tilde{r}^k$ satisfies
	\begin{equation}\label{lem4_eq1}
		\tilde{r}^k \in \nabla_y f(x^{k}, \tilde{y}^{k}) + \partial_{y}\tilde{g}(x^{k}, \tilde{y}^{k} - \tilde{r}^k).
	\end{equation}
	The sequence  $(x^{k_j},\tilde{y}^{k_j})$ satisfies condition \eqref{inexacty_y} with $k = k_j - 1$, which implies 
	\[
	\left\| \tilde{r}^{k_j} \right\| \le \frac{c_{\tilde{y}} }{p_{k_j}} \left\|(x^{k_j},y^{k_j})-(x^{k_j-1},\tilde{y}^{k_j-1})\right\|.
	\]
	From Lemma \ref{lem3}, we have $\lim_{k_j \to\infty} \|(x^{k_j},y^{k_j})-(x^{k_j-1},\tilde{y}^{k_j-1}) \|=0$. As $p_{k_j} \ge p_0$, the above inequality ensures that $\lim_{k_j \to\infty} \|\tilde{r}^{k_j} \|=0$.
	
	Taking the limit as  $k_j \rightarrow \infty$ in \eqref{lem4_eq1} with $k = k_j$, by the continuity of $ \nabla_y f$ and the outer semicontinuity of $\partial_{y}\tilde{g}$, it follows that
	\[
	0  \in \nabla_y f(\bar{x},\bar{y}) + \partial_{y}\tilde{g}(\bar{x},\bar{y}).
	\] 
	Since the lower-level problem is convex in $y$, this condition
	implies $\bar{y} \in S(\bar{x})$.
\end{proof}

We next establish the feasibility of the iterates generated by AGILS with respect to the constraint $\varphi({x}, {y}) -  v_\gamma({x},{y}) \le \epsilon$ when the penalty parameter $p_k$ is bounded.

\begin{proposition}\label{prop0}
	Let $\epsilon > 0$. If the sequence $\{p_k\}$ is bounded, then any accumulation point $(\bar{x}, \bar{y})$ of the iterate sequence $\{(x^k, {y}^k)\}$ satisfies
	\[
	\varphi(\bar{x}, \bar{y}) -  v_\gamma(\bar{x},\bar{y}) \le \epsilon.
	\]
\end{proposition}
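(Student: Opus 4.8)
The plan is to exploit that a bounded, non-decreasing penalty sequence whose only moves are jumps of size $\varrho_p$ must be eventually constant, so that from some iteration on every step is either Case~1 or Case~3 with the candidate accepted; one then shows that each such step exposes feasibility, either through a vanishing estimated violation $t^{k+1}$ (Case~1) or through the lower-level stationarity condition \eqref{inexacty_y} that the correction enforces (Case~3-accept). Concretely, since $\{p_k\}$ is non-decreasing and each increase adds exactly $\varrho_p$, boundedness gives $\bar{K}$ and $\bar{p}$ with $p_k=\bar{p}$ for all $k\ge\bar{K}$; for such $k$ the iteration cannot lie in Case~2 or in the rejecting branch of Case~3, hence is Case~1 or Case~3-accept. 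By Lemma~\ref{lem3}, $\Delta_{k+1}\to 0$, so I fix $K_1\ge\bar{K}$ with $\Delta_{k+1}<c_p/\bar{p}$ for $k\ge K_1$; then for $k\ge K_1$ lying in Case~1 the inequality $\Delta_{k+1}\ge c_p\min\{1/\bar{p},t^{k+1}\}$ forces the minimum to be $t^{k+1}$, i.e.\ $t^{k+1}\le\Delta_{k+1}/c_p$.

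I would then isolate two ``feasibility channels.'' For the first, applying the bound established inside the proof of Lemma~\ref{lem0}, namely $v_\gamma(x,y)\ge\varphi(x,\theta)+\tfrac{1}{2\gamma}\|\theta-y\|^2-C_\gamma\mathcal{G}(\theta,x,y)^2$, at $(x^{k+1},y^{k+1},\theta^{k+1})$, together with $\mathcal{G}(\theta^{k+1},x^{k+1},y^{k+1})\le\zeta_{k+1}$ and the definition \eqref{def_t} of $t^{k+1}$, one gets
\[
\varphi(x^{k+1},y^{k+1})-v_\gamma(x^{k+1},y^{k+1}) \le t^{k+1}+\epsilon+C_\gamma\,\zeta_{k+1}^2 .
\]
Since $\zeta_k\to0$ and $\varphi$ and $v_\gamma$ are continuous (Proposition~\ref{partial_grad_v}), along any subsequence with $(x^{k+1},y^{k+1})\to(\bar{x},\bar{y})$ and $t^{k+1}\to0$ this yields $\varphi(\bar{x},\bar{y})-v_\gamma(\bar{x},\bar{y})\le\epsilon$. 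For the second channel, whenever iteration $k$ is Case~3-accept the corrected iterate $\tilde{y}^{k+1}$ satisfies \eqref{inexacty_y}, which is exactly the hypothesis of Lemma~\ref{lem4}; hence any limit of such corrected iterates lies in $S(\cdot)$, and if $\bar{y}\in S(\bar{x})$ then taking $\theta=\bar{y}$ in \eqref{def_vg} gives $v_\gamma(\bar{x},\bar{y})\le\varphi(\bar{x},\bar{y})$, while $v_\gamma(\bar{x},\bar{y})\ge\inf_{\theta\in Y}\varphi(\bar{x},\theta)=\varphi(\bar{x},\bar{y})$, so that $\varphi(\bar{x},\bar{y})-v_\gamma(\bar{x},\bar{y})=0\le\epsilon$.

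The last step couples the channels by an index shift. Let $(x^{k_j},y^{k_j})\to(\bar{x},\bar{y})$. Since $\Delta_{k_j}=\|(x^{k_j},y^{k_j})-(x^{k_j-1},\tilde{y}^{k_j-1})\|\to0$, we also have $(x^{k_j-1},\tilde{y}^{k_j-1})\to(\bar{x},\bar{y})$. The iterate $\tilde{y}^{k_j-1}$ is generated at iteration $k_j-2$, which for $j$ large is Case~1 or Case~3-accept. If it is Case~1 for infinitely many $j$, restrict to that sub-subsequence: then $\tilde{y}^{k_j-1}=y^{k_j-1}$, hence $(x^{k_j-1},y^{k_j-1})\to(\bar{x},\bar{y})$, and since $t^{k_j-1}\le\Delta_{k_j-1}/c_p\to0$ (using $k_j-2\ge K_1$), the first channel applies. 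Otherwise iteration $k_j-2$ is Case~3-accept for all large $j$, so $\tilde{y}^{k_j-1}$ satisfies \eqref{inexacty_y} with $k=k_j-2$ and $(x^{k_j-1},\tilde{y}^{k_j-1})\to(\bar{x},\bar{y})$; Lemma~\ref{lem4} then gives $\bar{y}\in S(\bar{x})$ and the second channel applies. In either case $\varphi(\bar{x},\bar{y})-v_\gamma(\bar{x},\bar{y})\le\epsilon$.

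The main obstacle I anticipate is precisely this last combinatorial step: because the feasibility correction can be triggered infinitely often, one generally cannot extract a ``Case~1'' subsequence directly along $\{(x^k,y^k)\}$. The resolution is to notice that $\Delta_k\to0$ makes $\{(x^k,y^k)\}$ and $\{(x^k,\tilde{y}^k)\}$ share accumulation points, combined with careful bookkeeping of which iteration produces $(x^{k+1},y^{k+1})$, $t^{k+1}$ and $\tilde{y}^{k+1}$, so that $\tilde{y}^{k_j-1}$ is governed by iteration $k_j-2$; everything else reduces to continuity of $\varphi$ and $v_\gamma$ and to the estimates already available from Lemmas~\ref{error_bound}, \ref{lem0}, and \ref{lem3}.
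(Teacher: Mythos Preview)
Your proposal is correct and follows essentially the same approach as the paper: the boundedness of $\{p_k\}$ forces it to be eventually constant, after which every step is either algorithm Case~1 or Case~3-accept, and you split the analysis into the same two ``channels'' (Case~1 yields $t^{k+1}\to 0$; Case~3-accept feeds Lemma~\ref{lem4} to get $\bar y\in S(\bar x)$). The only cosmetic differences are that the paper indexes the convergent subsequence as $(x^{k_j+1},y^{k_j+1})$ and inspects iteration $k_j-1$ (your $k_j-2$ shift), and in Case~1 the paper passes to the limit via continuity of $\theta_\gamma^*$ rather than invoking the PL-type bound from Lemma~\ref{lem0}; both variants are valid and lead to the same conclusion.
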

\begin{proof}
	
	Let $(\bar{x}, \bar{y})$ be an accumulation point of $\{(x^k, {y}^k)\}$, and let \(\{k_j\}_{j \in \mathbb{N}} \subseteq \mathbb{N} \) be an index subsequence such that $(x^{k_j+1},y^{k_j+1})\to (\bar{x},\bar{y})$ as $j \rightarrow \infty$. By Lemma \ref{lem3}, we have  $\lim_{k_j \to\infty} \|(x^{k_j+1},y^{k_j+1})-(x^{k_j},\tilde{y}^{k_j}) \|=0$, which implies that $(x^{k_j},\tilde{y}^{k_j})$ also converges to $ (\bar{x},\bar{y})$ .
	
	Since the sequence $\{p_k\}$ is bounded and non-decreasing by construction in AGILS (\cref{alg1}), there exists $k_0 >0$ such that $p_k = p_{k_0}$ for all $k \ge k_0$. By passing to a subsequence if necessary, we can assume that $k_j - 1\ge k_0$ for all $j \in \mathbb{N}$.	
	
	According to the update rule of $p_k$ in AGILS (\cref{alg1}), the condition $p_{k_j - 1} = p_{k_j}$ implies that one of the following two cases must occur for each \(  k = k_j -1 \):
	
	Case I: $\left\|(x^{k_j},y^{k_j})-(x^{k_j-1}, \tilde{y}^{k_j-1})\right\| \ge c_p \min\left\{1/p_{k_0},{t^{k_j}}\right\}$ and $\tilde{y}^{k_j} = y^{k_j}$.
	
	Case II: $\left\|(x^{k_j},y^{k_j})-(x^{k_j-1}, \tilde{y}^{k_j-1})\right\| < c_p \min\left\{1/p_{k_0},{t^{k_j}}\right\}$ and the feasibility correction procedure is applied to generate $\tilde{y}^{k_j}$ satisfying condition \eqref{inexacty_y}.
	
	At least one of these two cases must hold for an infinite number of indices in the subsequence 	$\{ k_j  - 1\}$. We analyze them separately.
	
	Case 1:  Suppose Case I occurs for an infinite subsequence of indices. By passing to a further subsequence, we assume it holds for all $j \in \mathbb{N}$.
	
	The condition $\tilde{y}^{k_j} = y^{k_j}$ combined with $(x^{k_j},\tilde{y}^{k_j})\to (\bar{x},\bar{y})$ implies that $(x^{k_j}, {y}^{k_j})$\\
	$ \to (\bar{x},\bar{y})$. 
	From Lemma \ref{lem3}, we know $\lim_{k_j \to\infty} \|(x^{k_j},y^{k_j})-(x^{k_j-1},\tilde{y}^{k_j-1}) \|=0$. For the condition of Case I to hold, we must therefore have  $t^{k_j} \rightarrow 0$. This implies
	\begin{equation*}
		\limsup_{j \rightarrow \infty}	\left\{ \varphi(x^{k_j}, y^{k_j})-\varphi(x^{k_j},\theta^{k_j})-\frac{1}{2\gamma}\left\| \theta^{k_j} - y^{k_j} \right\|^2-\epsilon \right\} \le 0.
	\end{equation*}
	Since $\zeta_{k} \rightarrow 0$ and $\mathcal{G}( \theta^{k_j}, x^{k_j}, y^{k_j}) \le \zeta_{k_j}$, Lemma \ref{error_bound} ensures that $\| \theta^{k_j} - \theta^*_{\gamma}( x^{k_j}, y^{k_j} )\|\rightarrow 0$.
	By the continuity of $\theta^*_{\gamma}(x,y)$ (Proposition \ref{partial_grad_v}), it follows that $\theta^{k_j} \rightarrow \theta^*_{\gamma}(\bar{x}, \bar{y}) $.
	The continuity of $\varphi$ then implies that $\varphi(x^{k_j}, \theta^{k_j})+\frac{1}{2\gamma}\|\theta^{k_j} - y^{k_j}\|^2 \rightarrow v_\gamma(\bar{x},\bar{y})$. Then taking the limit in the inequality above as  $j \rightarrow \infty$ yields $\varphi(\bar{x},\bar{y})-v_{\gamma}(\bar{x},\bar{y})-\epsilon \le 0$. 
	
	Case 2: 
	Now, suppose Case II occurs for an infinite subsequence of indices. Again, we assume it holds for all $j \in \mathbb{N}$.
	
	In this scenario, the feasibility correction procedure is applied at each iteration $k = k_j -1$ to generate $\tilde{y}^{k_j}$ satisfying condition \eqref{inexacty_y}. Since $(x^{k_j},\tilde{y}^{k_j}) \rightarrow (\bar{x},\bar{y}) $ and satisfies condition \eqref{inexacty_y}, it follows from Lemma \ref{lem4} that $\bar{y} \in S(\bar{x})$. For any such point, $\theta^*_{\gamma}(\bar{x}, \bar{y}) = \bar{y} $ (see, e.g., \cite[Theorem 10.7]{beck2017first}), and therefore \( \varphi(\bar{x}, \bar{y}) - v_\gamma(\bar{x}, \bar{y}) = 0 \).  Since $ \epsilon > 0$, the constraint $		\varphi(\bar{x}, \bar{y}) -  v_\gamma(\bar{x},\bar{y}) \le \epsilon$ is satisfied.
	
	Since any accumulation point must be the limit of a subsequence that falls into one of these two cases, we conclude that $\varphi(\bar{x}, \bar{y}) -  v_\gamma(\bar{x},\bar{y}) \le \epsilon$.
\end{proof}

The following theorem establishes that the penalty parameter sequence $\left\{p_k\right\}$ is bounded, provided the sequence of iterates $\left\{ (x^k,y^k) \right\}$ is bounded.

\begin{theorem}\label{th2}
	Let $\epsilon > 0$. If the sequence of iterates $\left\{ (x^k,y^k) \right\}$ is bounded, then the penalty parameter sequence $\left\{p_k\right\}$ must be bounded. 
\end{theorem}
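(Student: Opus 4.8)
The plan is to argue by contradiction. Suppose the nondecreasing sequence $\{p_k\}$ is unbounded, so $p_k \to \infty$; since every update raises $p$ by the fixed amount $\varrho_p$, the penalty must be increased at infinitely many iterations, and we let $\mathcal{K}$ denote that index set. Inspecting the update rule, $p_{k+1} = p_k + \varrho_p$ occurs only in Case 2 or in Case 3 when the candidate is rejected, and in both situations the trigger \eqref{update_p_1} holds; hence for every $k \in \mathcal{K}$ we have $\Delta_{k+1} < c_p/p_k$ and $\Delta_{k+1} < c_p\, t^{k+1}$, and the latter forces $t^{k+1} > 0$. By Lemma \ref{lem3}, $\Delta_{k+1} \to 0$; combined with boundedness of $\{(x^k,y^k)\}$ this makes $\{\tilde y^k\}$ bounded (as $\|\tilde y^k - y^{k+1}\| \to 0$), and since $\mathcal{G}(\theta^{k+1},x^{k+1},y^{k+1}) \le \zeta_{k+1} \to 0$, Lemma \ref{error_bound} gives $\|\theta^{k+1} - \theta_\gamma^*(x^{k+1},y^{k+1})\| \to 0$.

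First I would rule out that Case 2 occurs for infinitely many $k \in \mathcal{K}$. If it did, extract a subsequence of such indices along which $(x^{k+1},y^{k+1}) \to (\bar x,\bar y)$; then $(x^k,\tilde y^k) \to (\bar x,\bar y)$ as well, and by the continuity of $\theta_\gamma^*$ (Proposition \ref{partial_grad_v}) we get $\theta^{k+1} \to \theta_\gamma^*(\bar x,\bar y)$. The Case 2 condition gives $\|y^{k+1} - \theta^{k+1}\| \le c_y\gamma/p_k \to 0$, so $\bar y = \theta_\gamma^*(\bar x,\bar y)$, i.e. $\bar y \in S(\bar x)$ and therefore $\varphi(\bar x,\bar y) = v_\gamma(\bar x,\bar y)$ (see, e.g., \cite[Theorem 10.7]{beck2017first}). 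Passing to the limit in \eqref{def_t}, the argument of the $\max$ tends to $\varphi(\bar x,\bar y) - v_\gamma(\bar x,\bar y) - \epsilon = -\epsilon < 0$, so $t^{k+1} = 0$ for all large $k$ in this subsequence, contradicting $t^{k+1} > 0$. Hence Case 2 happens only finitely often, so Case 3 with a rejected candidate occurs for infinitely many $k \in \mathcal{K}$.

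This is the crux. Fix a subsequence $\{k_j\} \subseteq \mathcal{K}$ along which Case 3 fires and the candidate is rejected; denote by $\hat y^{k_j+1}$ and $\hat\theta^{k_j+1}$ the candidate and its companion produced in Algorithm \ref{alg2} (the objects appearing in \eqref{inexacty_y}, \eqref{inexacty3} and \eqref{decrease_new_y}). By \eqref{inexacty_y}, the unit-step prox-gradient residual of the lower-level problem at $(x^{k_j+1}, \hat y^{k_j+1})$ is at most $c_{\tilde y}\Delta_{k_j+1}/p_{k_j} \to 0$. I then bound the two sides of the failed descent test \eqref{decrease_new_y}. On the left, $\varphi(x^{k_j+1},\hat\theta^{k_j+1}) + \tfrac1{2\gamma}\|\hat\theta^{k_j+1} - \hat y^{k_j+1}\|^2 \ge v_\gamma(x^{k_j+1},\hat y^{k_j+1})$ by definition of the infimum, so the left side is at most $\tfrac1{p_{k_j}}F(x^{k_j+1},\hat y^{k_j+1}) + \big(\varphi(x^{k_j+1},\hat y^{k_j+1}) - v_\gamma(x^{k_j+1},\hat y^{k_j+1})\big)$; a unit-step instance of the $\mathrm{PL}$-type inequality established in the proof of Lemma \ref{lem0}, applied at $\theta = \hat y^{k_j+1}$, bounds the envelope gap by $C\,(c_{\tilde y}\Delta_{k_j+1}/p_{k_j})^2 \to 0$, and the $F$-term vanishes because $F$ is continuous, the relevant iterates are bounded, and $p_{k_j} \to \infty$. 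On the right, $t^{k_j+1} > 0$ together with \eqref{def_t} gives $\varphi(x^{k_j+1},y^{k_j+1}) - \varphi(x^{k_j+1},\theta^{k_j+1}) - \tfrac1{2\gamma}\|\theta^{k_j+1} - y^{k_j+1}\|^2 = t^{k_j+1} + \epsilon$, so the right side equals $\tfrac1{p_{k_j}}F(x^{k_j+1},y^{k_j+1}) + t^{k_j+1} + \epsilon \ge \tfrac1{p_{k_j}}F(x^{k_j+1},y^{k_j+1}) + \epsilon$, whose $\liminf$ is at least $\epsilon > 0$. Consequently, for $j$ large the left side of \eqref{decrease_new_y} is strictly less than the right side, i.e. \eqref{decrease_new_y} holds, contradicting the rejection that caused $p_{k_j}$ to be increased. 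This contradiction shows $\{p_k\}$ must be bounded.

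The main obstacle is this last step, in two respects. First, one must convert the verifiable criterion \eqref{inexacty_y} into a genuine quadratic bound $\varphi(x^{k_j+1},\hat y^{k_j+1}) - v_\gamma(x^{k_j+1},\hat y^{k_j+1}) = O\big((\Delta_{k_j+1}/p_{k_j})^2\big)$ on the Moreau-envelope gap at the candidate; this is a unit-step-size instance of the error bound underlying Lemmas \ref{error_bound} and \ref{lem0}, so it should go through, but some care is needed in reconciling the step-one residual in \eqref{inexacty_y} with the $\eta$-residual $\mathcal{G}$. Second, the argument needs $p_{k_j}^{-1}F(x^{k_j+1},\hat y^{k_j+1}) \to 0$, which requires the candidate sequence generated by the feasibility-correction subroutine to be bounded; this holds, e.g., whenever $Y$ is bounded, or more generally when the lower-level solution sets stay locally bounded near accumulation points, so that the near-stationary candidates remain bounded. (Alternatively, if the candidate subsequence is bounded one may avoid the PL estimate and instead pass to a convergent sub-subsequence, whose limit lies in $S(\bar x)$ by Lemma \ref{lem4}, forcing the envelope gap to vanish by continuity.)
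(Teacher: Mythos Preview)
Your proposal is correct and follows the same contradiction architecture as the paper: split the penalty-increase iterations into Case~2 and Case~3-rejected, use $t^{k+1}>0$ to force the envelope gap at any limit point to be at least $\epsilon$, then show each scenario drives that gap to zero. The one technical difference is in the rejected-candidate step: the paper passes to a subsequential limit of the candidate and invokes Lemma~\ref{lem4} to place it in $S(\bar x)$, whereas your primary route bounds $\varphi(x^{k_j+1},\hat y^{k_j+1})-v_\gamma(x^{k_j+1},\hat y^{k_j+1})$ directly by applying the PL-type estimate from the proof of Lemma~\ref{lem0} with $\theta=y=\hat y^{k_j+1}$ (so the quadratic term vanishes and $\mathcal G$ reduces, up to the $\eta$-versus-unit-step equivalence you note, to the lower-level residual in \eqref{inexacty_y}); this is a clean shortcut that avoids extracting a limit of the candidates for that particular term. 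The boundedness of the candidate sequence $\{\hat y^{k_j+1}\}$, which you correctly flag as needed for the $F/p_{k_j}$ term, is in fact glossed over in the paper as well---it tacitly treats the rejected candidate as part of the bounded sequence $\{\tilde y^k\}$ obtained from Lemma~\ref{lem3}, but that argument only controls the \emph{post-rejection} values $\tilde y^{k+1}=y^{k+1}$---so your caveat is well placed and your suggested remedies (bounded $Y$, or local level-boundedness of the lower-level solution map near accumulation points) are the right ones.
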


\begin{proof}
	
	The proof is by contradiction. Assume that \( p_k \to \infty \) as \( k \to \infty \). According to the penalty update rule in AGILS (\cref{alg1}), this implies the penalty parameter is increased at infinitely many iterations. Let \(\{k_j\}_{j \in \mathbb{N}} \subseteq \mathbb{N} \) be the subsequence of iteration indices where such an update occurs.
	
	Since $\{(x^k, y^k)\}$ is bounded and Lemma \ref{lem3} establishes that $\lim_{k \to\infty} \|(x^{k+1},y^{k+1})$\\
	$-(x^{k},\tilde{y}^{k}) \|=0$, the sequence $\{\tilde{y}^k\}$ is also bounded. 
	By passing to a subsequence if necessary, we can assume that for the indices $\{k_j\}$, we have $(x^{k_j+1},y^{k_j+1})\to (\bar{x},\bar{y})$ for some $(\bar{x},\bar{y}) \in X \times Y$ and $\tilde{y}^{k_j+1} \to \tilde{y}$ for some $\tilde{y} \in Y$.
	From Lemma \ref{lem3}, it also follows that $(x^{k_j},\tilde{y}^{k_j})\to (\bar{x},\bar{y})$ .
	
	At each iteration \( k = k_j \), the penalty parameter is increased, which requires that condition \eqref{update_p_1} is met
	\begin{equation}\label{th2_eq1}
		t^{k_j+1} > c_p^{-1}\left\|\left(x^{k_j+1},
		{y}^{k_j+1}\right)-\left(x^{k_j}, \tilde{y}^{k_j}\right)\right\| > 0.
	\end{equation}		
	We will show that this leads to the conclusion that $\varphi(\bar{x},\bar{y})-v_{\gamma}(\bar{x},\bar{y}) \ge \epsilon$.
	Let us assume the contrary, i.e., $\varphi(\bar{x},\bar{y})-v_{\gamma}(\bar{x},\bar{y})-\epsilon<0$.
	From Lemma \ref{error_bound}, the conditions $\mathcal{G}( \theta^{k_j+1}, x^{k_j+1}, y^{k_j+1}) \le \zeta_{k_j+1}$ and $\zeta_{k} \rightarrow 0$ ensure that $\| \theta^{k_j+1} - \theta^*_{\gamma}( x^{k_j+1}, y^{k_j+1} )\|\rightarrow 0$. 
	By the continuity of $\theta^*_{\gamma}(x,y)$ (Proposition \ref{partial_grad_v}) and $(x^{k_j+1},y^{k_j+1})\to (\bar{x},\bar{y})$, we have  $\theta^{k_j+1} \rightarrow \theta^*_{\gamma}(\bar{x}, \bar{y}) $. 
	The continuity of $\varphi$ then implies that $\varphi(x^{k_j+1}, \theta^{k_j+1})+\|\theta^{k_j+1} - y^{k_j+1}\|^2/(2\gamma) \rightarrow v_\gamma(\bar{x},\bar{y})$.
	Our assumption $\varphi(\bar{x},\bar{y})-v_{\gamma}(\bar{x},\bar{y})-\epsilon<0$, combined with the continuity of $\varphi$, means that for $j$ sufficiently large,
	\begin{equation*}
		\varphi(x^{k_j+1}, y^{k_j+1})-\varphi(x^{k_j+1},\theta^{k_j+1})-\frac{1}{2\gamma}\left\| \theta^{k_j+1} - y^{k_j+1} \right\|^2-\epsilon <0. 
	\end{equation*}
	This inequality implies that $t^{k_j+1} = 0$ which contradicts \eqref{th2_eq1}. Thus, our assumption was false, and we must have  $\varphi(\bar{x},\bar{y})-v_{\gamma}(\bar{x},\bar{y}) \ge \epsilon $. 
	
	An increase in the penalty parameter at iteration  \( k = k_j \) corresponds to one of the following two cases:
	
	Case I:  $\left\| y^{k+1} - \theta^{k+1} \right\| \le c_y \gamma/p_k$.
	
	Case II: $\left\| y^{k+1} - \theta^{k+1} \right\| > c_y \gamma/p_k$,  the feasibility correction procedure is applied, but the corrected iterate $\tilde{y}^{k+1} $ violates the descent condition \eqref{decrease_new_y}.
	
	At least one of these cases must occur for an infinite subsequence of
	$\{ k_j \}$.  We analyze both possibilities to derive a contradiction.
	
	Case 1: Assume an infinite subsequence, still denoted by $\{ k_j \}$, falls into Case I. For this subsequence, we have
	\[
	\left\| y^{k_j+1} - \theta^{k_j+1}\right\| \le \frac{\gamma}{p_{k_j+1}} \to 0.
	\]
	Since we assume $p_{k_j} \rightarrow \infty$, the right-hand side converges to $0$. As $j\to \infty$, we have $y^{k_j+1} \rightarrow \bar{y}$ and $\theta^{k_j+1} \rightarrow \theta^*_{\gamma}(\bar{x}, \bar{y}) $. Taking the limit yields $\| \bar{y} - \theta^*_{\gamma}(\bar{x}, \bar{y}) \| \le 0$, which implies $\bar{y} = \theta^*_\gamma(\bar{x},\bar{y})$. By definition, this  yields  $\varphi(\bar{x}, \bar{y}) = v_\gamma(\bar{x}, \bar{y})$, which contradicts our established result that \( \varphi(\bar{x}, \bar{y}) - v_\gamma(\bar{x}, \bar{y}) \ge \epsilon > 0 \). 
	
	Case 2: Assume an infinite subsequence, again denoted by $\{ k_j \}$, belongs to Case II.
	Here, the feasibility correction procedure is applied at each iteration  $k = k_j$, generating a corrected iterate $\tilde{y}^{k_j+1}$ satisfying condition \eqref{inexacty_y},  together with a point $\tilde{\theta}^{k_j+1}$ defined by $\tilde{\theta}^{k_j+1}=
		\operatorname{Prox}_{\eta \tilde g(x^{k_j+1},\cdot)}
		(
		\hat{\theta}^{k_j+1}
		-\eta(
		\nabla_y f(x^{k_j+1},\hat{\theta}^{k_k+1})
		+(\hat{\theta}^{k_j+1}-\tilde y^{k_j+1})/\gamma
		)
		)$ with  $\hat\theta^{k_j+1}$ satisfying the inexact criterion \eqref{inexacty3}. However, by the definition of Case II, these iterates violate the descent condition \eqref{decrease_new_y}, leading to
	\begin{equation*}
		\begin{aligned}
			&\frac{1}{p_{k_j}}F(x^{k_j+1},\tilde{y}^{k_j+1}) + \varphi(x^{k_j+1},\tilde{y}^{k_j+1}) - \varphi(x^{k_j+1},\tilde{\theta}^{k_j+1}) - \frac{1}{2\gamma} \| \tilde{\theta}^{k_j+1} - \tilde{y}^{k_j+1} \|^2 \\
			>\ 
			&\frac{1}{p_{k_j}}F(x^{k_j+1},y^{k_j+1}) + \varphi(x^{k_j+1},y^{k_j+1}) - \varphi(x^{k_j+1},\theta^{k_j+1}) - \frac{1}{2\gamma} \| \theta^{k_j+1} - y^{k_j+1} \|^2.
		\end{aligned}
	\end{equation*}
	As $j \rightarrow \infty$, we have $(x^{k_j+1},\tilde{y}^{k_j+1}) \rightarrow (\bar{x},\tilde{y}) $. Since $(x^{k_j+1},\tilde{y}^{k_j+1})$ satisfies condition \eqref{inexacty_y}, it follows from Lemma \ref{lem4} that $\tilde{y} \in S(\bar{x})$. Moreover, given that $\zeta_{k} \rightarrow 0$ and $\mathcal{G}(\hat{\theta}^{k_j+1}, x^{k_j+1},\tilde{y}^{k_j+1}) \le \zeta_{k_j+1}$, Lemma \ref{error_bound} ensures that $\| \hat{\theta}^{k_j+1} - \theta^*_{\gamma}( x^{k_j+1}, \tilde{y}^{k_j+1} )\|\rightarrow 0$. By the continuity of $\theta^*_{\gamma}(x,y)$ (Proposition \ref{partial_grad_v}), this implies $\hat{\theta}^{k_j+1} \rightarrow \theta^*_{\gamma}(\bar{x}, \tilde{y}) $. Finally, from the definition of \(\tilde{\theta}^{k_j+1}\), we have $\|\tilde{\theta}^{k_j+1}-\hat{\theta}^{k_j+1}\|=\mathcal{G}(\hat{\theta}^{k_j+1},x^{k_j+1},\tilde{y}^{k_j+1})\to0$, and therefore $\tilde\theta^{k_j+1}\to \theta^*_\gamma(\bar x,\tilde y)$.

	Because $\tilde{y} \in S(\bar{x})$, we know that $\theta^*_{\gamma}(\bar{x}, \tilde{y}) = \tilde{y} $ (see, e.g., \cite[Theorem 10.7]{beck2017first}). 
	Taking the limit of the left-hand side of the inequality above as $k_j \rightarrow \infty$, and using the continuity of $F$ and $\varphi$ along with $p_{k_j} \rightarrow \infty$, we find that the expression converges to $ \varphi(\bar{x}, \tilde{y}) - \varphi(\bar{x}, \theta^*_{\gamma}(\bar{x}, \tilde{y})) -  \frac{1}{2\gamma}  \|\theta^*_{\gamma}(\bar{x}, \tilde{y}) - \tilde{y}\|^2 = 0$. On the other hand, we have $(x^{k_j+1},y^{k_j+1})\to (\bar{x},\bar{y})$ and $\theta^{k_j+1} \rightarrow \theta^*_{\gamma}(\bar{x}, \bar{y}) $. Therefore, the right-hand side of the inequality converges to $ \varphi(\bar{x}, \bar{y}) - v_\gamma(\bar{x}, \bar{y}) \ge \epsilon > 0 $. As $k_j \rightarrow \infty$, the inequality thus implies $0 \ge \epsilon >0$, which is a contradiction.
	
	Since both possible scenarios for an infinite number of penalty updates lead to a contradiction, our initial assumption that  \( p_k \to \infty \)  must be false. Therefore, the sequence of penalty parameters $\{p_k\}$ is bounded.
\end{proof}

Next, we establish that once the penalty parameter $p_k$ becomes sufficiently large, the feasibility correction procedure will be applied only finitely many times. 

\begin{proposition}\label{th4}
	Let $\epsilon > 0$. Suppose the sequence of iterates $\left\{ (x^k,y^k) \right\}$ is bounded, and define $ M_F := \sup_k |F(x^{k},{y}^{k}) | $. If there exists $k_0 \in \mathbb{N}$	such that the penalty parameter $p_{k_0}$ is sufficiently large to satisfy $	p_{k_0} \ge 4M_F/\epsilon$,	then the feasibility correction
	procedure is applied only a finite number of times.
\end{proposition}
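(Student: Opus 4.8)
The plan is to argue by contradiction, combining the boundedness of the penalty parameter with the almost-monotone behaviour of the merit function $\tilde{\psi}$. First, since $\{(x^k,y^k)\}$ is bounded, Theorem~\ref{th2} gives that $\{p_k\}$ is bounded; being non-decreasing and increasing only in jumps of size $\varrho_p$, it stabilizes after finitely many steps, so there is an index $K$ with $p_k=p_\infty\ge p_{k_0}\ge 4M_F/\epsilon$ for all $k\ge K$. A rejected feasibility correction, and also Case~2 of Algorithm~\ref{alg1}, forces $p_{k+1}=p_k+\varrho_p$, so neither can occur for $k\ge K$; hence for every $k\ge K$ at which the feasibility correction procedure is invoked (Case~3) it must be \emph{accepted}, i.e.\ the returned $\tilde y^{k+1}$ satisfies both \eqref{inexacty_y} and the descent condition \eqref{decrease_new_y}. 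Assume, for contradiction, that Case~3 occurs infinitely often, and let $\{k_j\}$ denote the corresponding indices, all $\ge K$ after discarding finitely many. For each $k_j$, the Case~3 inequality $\Delta_{k_j+1}<c_p\min\{1/p_\infty,\,t^{k_j+1}\}$ forces $t^{k_j+1}>0$, hence
\[
\varphi(x^{k_j+1},y^{k_j+1})-\varphi(x^{k_j+1},\theta^{k_j+1})-\tfrac{1}{2\gamma}\|\theta^{k_j+1}-y^{k_j+1}\|^2>\epsilon .
\]

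Next I would extract a convergent subsequence: the iterates are bounded and, by Lemma~\ref{lem3}, so is $\{\tilde y^k\}$, so after relabelling $(x^{k_j+1},y^{k_j+1})\to(\bar x,\bar y)$ and $\tilde y^{k_j+1}\to\hat y$. Since $\mathcal{G}(\theta^{k_j+1},x^{k_j+1},y^{k_j+1})\le\zeta_{k_j+1}\to 0$, Lemma~\ref{error_bound} together with the continuity of $\theta_\gamma^*$ (Proposition~\ref{partial_grad_v}) gives $\theta^{k_j+1}\to\theta_\gamma^*(\bar x,\bar y)$; passing to the limit in the displayed inequality then yields $\varphi(\bar x,\bar y)-v_\gamma(\bar x,\bar y)\ge\epsilon$. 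On the other hand, $\tilde y^{k_j+1}$ satisfies \eqref{inexacty_y}, so Lemma~\ref{lem4} gives $\hat y\in S(\bar x)$; consequently $\theta_\gamma^*(\bar x,\hat y)=\hat y$ and $\varphi(\bar x,\hat y)-v_\gamma(\bar x,\hat y)=0$.

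To finish, I would invoke the convergence of the merit value $u_k:=\tilde{\psi}_{p_k}(x^k,\tilde y^k)$: Lemma~\ref{lem1} shows $u_{k+1}\le u_k+a_k-b_k$ with $\{a_k\}$ summable, $b_k\ge0$ and $u_k\ge0$, so $u_k\to\psi^*$ for some $\psi^*\ge0$. The one-step estimates inside the proofs of Lemma~\ref{lem1} and Lemma~\ref{lem0} sandwich $\tilde{\psi}_{p_\infty}(x^{k+1},y^{k+1})$ between $u_{k+1}$ and $u_k$ up to terms of order $\zeta_k^2$ (valid for $k\ge K$, where $p_k=p_{k+1}=p_\infty$), so $\tilde{\psi}_{p_\infty}(x^{k+1},y^{k+1})\to\psi^*$ as well. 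Taking limits along $\{k_j\}$ and using $\varphi(\bar x,\hat y)-v_\gamma(\bar x,\hat y)=0$, we obtain
\[
\tfrac{1}{p_\infty}\bigl(F(\bar x,\bar y)-\underline F\bigr)+\varphi(\bar x,\bar y)-v_\gamma(\bar x,\bar y)=\psi^*=\tfrac{1}{p_\infty}\bigl(F(\bar x,\hat y)-\underline F\bigr),
\]
whence $\varphi(\bar x,\bar y)-v_\gamma(\bar x,\bar y)=\tfrac{1}{p_\infty}\bigl(F(\bar x,\hat y)-F(\bar x,\bar y)\bigr)$. Since $\hat y=\lim_j\tilde y^{k_j+1}=\lim_j y^{k_j+2}$ by Lemma~\ref{lem3} and $x^{k_j+2}\to\bar x$, the point $(\bar x,\hat y)$ is a cluster point of $\{(x^k,y^k)\}$, so $|F(\bar x,\hat y)|\le M_F$ and $|F(\bar x,\bar y)|\le M_F$; therefore $\varphi(\bar x,\bar y)-v_\gamma(\bar x,\bar y)\le 2M_F/p_\infty\le\epsilon/2$, contradicting $\varphi(\bar x,\bar y)-v_\gamma(\bar x,\bar y)\ge\epsilon$. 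Hence the feasibility correction procedure is applied only finitely often.

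I expect the main obstacle to be the last paragraph: one must show carefully, via Lemma~\ref{lem0} and the intermediate descent inequalities of Lemma~\ref{lem1}, that the feasibility correction perturbs the merit value by only a vanishing amount — so that $\tilde{\psi}_{p_\infty}(x^{k_j+1},y^{k_j+1})$ and $\tilde{\psi}_{p_\infty}(x^{k_j+1},\tilde y^{k_j+1})=u_{k_j+1}$ share the same limit $\psi^*$ — and that the limiting corrected iterate $\hat y$ is still controlled by $M_F$. Everything else (the penalty stabilization, forcing $t^{k_j+1}>0$ in Case~3, and the limiting identities via Lemmas~\ref{error_bound}, \ref{lem4} and Proposition~\ref{partial_grad_v}) is routine given the earlier results.
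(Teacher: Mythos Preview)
Your proposal is correct and follows essentially the same approach as the paper: stabilize $p_k$ via Theorem~\ref{th2}, show that both $\tilde\psi_{\bar p}(x^{k+1},y^{k+1})$ and $\tilde\psi_{\bar p}(x^{k+1},\tilde y^{k+1})$ converge to the same limit by sandwiching with the descent inequalities of Lemmas~\ref{lem0}--\ref{lem1}, and derive a contradiction from the uniform gap $\varphi-v_\gamma\ge\epsilon$ at the uncorrected iterate versus $\varphi-v_\gamma\to 0$ at the corrected one, bounded against $2M_F/p_\infty\le\epsilon/2$. Your explicit observation that after stabilization only \emph{accepted} corrections can occur (since rejection and Case~2 both increment $p_k$), and your justification that $(\bar x,\hat y)$ is a cluster point of $\{(x^k,y^k)\}$ so that $|F(\bar x,\hat y)|\le M_F$, are cleaner than the paper's somewhat implicit handling of these points, but the argument is otherwise identical.
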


\begin{proof}
By Theorem~\ref{th2}, the penalty parameter sequence $\{p_k\}$ is bounded. 	Hence, $p_k = \bar{p}$ for all sufficiently large $k$, and we may assume without loss of generality that $p_k = \bar{p}$ for all $k$. Under the assumption, we have
\begin{equation}\label{th4_eq5}
\frac{M_F}{\bar{p}} \le \frac{\epsilon}{4}.
\end{equation}

From \eqref{lem1_eq12} in Lemma~\ref{lem0}, together with \eqref{lem1_eq8} and \eqref{lem1_eq4} in the proof of Lemma~\ref{lem1}, we obtain
\begin{equation}\label{th4_eq1}
\begin{aligned}
\tilde{\psi}_{\bar{p}}\left(x^{k+1}, \tilde{y}^{k+1}\right)\leq\, &  \tilde{\psi}_{\bar{p}}(x^{k+1}, y^{k+1}) + C_{\gamma} \zeta_{k+1}^2 \\
\le \, & 	\tilde{\psi}_{\bar{p}}\left(x^k, \tilde{y}^k\right)+ C_1\zeta_k^2 + C_2\zeta_{k+1}^2
- \frac{c_\alpha}{4}\|x^{k+1}-x^k\|^2
- \frac{c_\beta}{4}\|y^{k+1}-\tilde y^k\|^2,
\end{aligned}
\end{equation}
where $C_1=C_\eta^2(L_{f_x}+L_{g_2})^2/c_\alpha$ and $C_2=C_\eta^2/(c_\beta\gamma^2)+C_\gamma$.

Define
\[
A_k:=\tilde{\psi}_{\bar p}(x^{k},\tilde y^{k})
+\sum_{j=k}^{\infty}\!\big(C_1\zeta_j^2+C_2\zeta_{j+1}^2\big).
\]
Adding the tail sum to both sides of \eqref{th4_eq1} yields
\[
A_{k+1}\le A_k
-\frac{c_\alpha}{4}\|x^{k+1}-x^k\|^2
-\frac{c_\beta}{4}\|y^{k+1}-\tilde y^k\|^2.
\]
Thus, $\{A_k\}$ is a nonincreasing sequence. Since $\tilde{\psi}_{\bar{p}}\left(x^{k}, \tilde{y}^{k}\right) \ge 0$ for all $k$, we have $A_k \ge 0$, implying that ${A_k}$ is bounded below and hence convergent, i.e., $A_k \to \bar{A} \in \mathbb{R}$.

From Lemma~\ref{lem1}, the sequence $\{\zeta_k\} $ is square summable, so $\zeta_k \to 0$ and $\sum_{j=k}^{\infty} (C_1\zeta_j^2 + C_2\zeta_{j+1}^2) \to 0$ as $k \to \infty$. Consequently,
\begin{equation*}
\lim_{k \rightarrow \infty}\tilde{\psi}_{\bar p}(x^{k},\tilde y^{k}) = \lim_{k \rightarrow \infty} \left(A_k - \sum_{j=k}^{\infty}\big(C_1\zeta_j^2+C_2\zeta_{j+1}^2\big) \right) = \bar{A}.
\end{equation*}
Combining this with \eqref{th4_eq1} and the fact that $\zeta_k \to 0$ gives
\begin{equation*}
\lim_{k \rightarrow \infty}\tilde{\psi}_{\bar p}(x^{k}, y^{k}) = 	\lim_{k \rightarrow \infty}\tilde{\psi}_{\bar p}(x^{k},\tilde y^{k})   = \bar{A}.
\end{equation*}

We now prove the claim by contradiction. Suppose that the feasibility correction procedure is triggered infinitely many times. Then there exists an infinite index set ${k_j}$ such that Case~3 is executed at each iteration $k_j$. Then, we have,
for all $j > 0$, 
\[
t^{k_j+1} > 0.
\]
By the definition of $t^{k} $, we have
\[
\begin{aligned}
t^{k_j+1}  &= \varphi(x^{k_j+1},y^{k_j+1})-\varphi(x^{k_j+1},\theta^{k_j+1})-\frac{1}{2\gamma}\| \theta^{k_j+1} - y^{k_j+1}\|^2-\epsilon \\
&\le \varphi(x^{k_j+1},y^{k_j+1}) - v_\gamma(x^{k_j+1},y^{k_j+1})-\epsilon,
\end{aligned}
\]
where the inequality follows from 	$v_\gamma(x^{k_j+1},y^{k_j+1})=\min_{\theta\in Y}\{\varphi(x^{k_j+1},\theta)+\tfrac{1}{2\gamma}\|\theta-y^{k_j+1}\|^2\}$. 
Since $t^{k_j+1} > 0$, we obtain
\begin{equation}\label{th4_eq2}
\varphi(x^{k_j+1},y^{k_j+1}) - v_\gamma(x^{k_j+1},y^{k_j+1}) \ge \epsilon.
\end{equation}

As the feasibility correction procedure is performed at iteration $k_j$, a corrected iterate $\tilde{y}^{k_j+1}$ satisfying condition~\eqref{inexacty_y} is generated.
Because $\{(x^k, y^k)\}$ is bounded and Lemma \ref{lem3} shows that $\lim_{k \to\infty} \|(x^{k+1},y^{k+1})-(x^{k},\tilde{y}^{k}) \|=0$, the sequence $\{\tilde{y}^k\}$ is also bounded. 
By passing to a subsequence if necessary, we may assume that $(x^{k_j+1},\tilde{y}^{k_j+1})\to (\bar{x},\bar{y})$ for some $(\bar{x},\bar{y}) \in X \times Y$. Then Lemma \ref{lem4} implies that $\bar{y}\in S(\bar x)$, and thus $	\varphi(\bar x,\bar{y})-v_\gamma(\bar x,\bar{y})=0$. By continuity of $\varphi$ and $v_\gamma$, there exists $j_0 \in \mathbb{N}$ such that for all $j\ge j_0$,
\begin{equation}\label{th4_eq3}
\varphi(x^{k_j+1},\tilde{y}^{k_j+1})-v_\gamma(x^{k_j+1},\tilde{y}^{k_j+1})\le \frac{\epsilon}{10}.
\end{equation}

Furthermore, since $\{(x^k, y^k)\}$ is bounded and $\lim_{k \to\infty} \|(x^{k+1},y^{k+1})-(x^{k},\tilde{y}^{k}) \|=0$ from  Lemma \ref{lem3}, by the continuity of $F$ on $X \times Y$, we have 
\begin{equation}\label{th4_eq4}
\sup_k |F(x^{k},\tilde{y}^{k}) | = \sup_k |F(x^{k},{y}^{k}) | = M_F.
\end{equation}

We now consider the difference $\tilde{\psi}_{\bar p}(x^{k_j+1},y^{k_j+1})-\tilde{\psi}_{\bar p}(x^{k_j+1},\tilde{y}^{k_j+1})$ when  $j\ge j_0$,
\[
\begin{aligned}
&\tilde{\psi}_{\bar p}(x^{k_j+1},y^{k_j+1})-\tilde{\psi}_{\bar p}(x^{k_j+1},\tilde{y}^{k_j+1}) \\
= \, & 	\frac{1}{\bar p}\big( F(x^{k_j+1},y^{k_j+1}) - F(x^{k_j+1},\tilde{y}^{k_j+1}) \big) \\
&+(\varphi-v_\gamma)(x^{k_j+1},y^{k_j+1}) 
-(\varphi-v_\gamma)(x^{k_j+1},\tilde{y}^{k_j+1})\\
\ge \, & - \frac{2M_F}{\bar{p}} + \epsilon - \frac{\epsilon}{10} \\
\ge\, & - \frac{\epsilon}{2} +  \epsilon - \frac{\epsilon}{10} \\
= \, & \frac{2}{5}\epsilon > 0,
\end{aligned}
\]
where the first inequality follows from \eqref{th4_eq2}, \eqref{th4_eq3} and \eqref{th4_eq4} and the second follows from \eqref{th4_eq5}. This contradicts the fact that $	\lim_{k \rightarrow \infty}\tilde{\psi}_{\bar p}(x^{k}, y^{k}) = 	\lim_{k \rightarrow \infty}\tilde{\psi}_{\bar p}(x^{k},\tilde y^{k})   = \bar{A}$. Hence, the feasibility correction procedure can occur only finitely many times.
\end{proof}

\subsection{Convergence to stationary points}
\label{sec4.3}

In this part, we establish the  sequential convergence  properties of the AGILS (\cref{alg1}).  We demonstrates that any accumulation point of sequence $\{(x^k, y^k)\}$ is a KKT point of the problem $({\rm VP})_\gamma^{\bar{\epsilon}}$ for some  $\bar{\epsilon}\le\epsilon$. The problem setting and algorithmic assumptions are identical to those in the previous subsection.

\begin{theorem}\label{th1}
	Let $\epsilon > 0$. If the sequence $\{p_k\}$ is bounded, then any accumulation point $(\bar{x}, \bar{y})$ of $\{(x^k,y^k)\}$ is a KKT point of $({\rm VP})_\gamma^{\bar{\epsilon}}$ for some  $\bar{\epsilon}\le\epsilon$. 
\end{theorem}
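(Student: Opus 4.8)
The plan is to read off the KKT system of $({\rm VP})_\gamma^{\bar{\epsilon}}$ as the limit, along a subsequence converging to $(\bar x,\bar y)$, of the first‑order optimality conditions of the proximal $y$‑subproblem \eqref{update_y} and the projected $x$‑subproblem \eqref{update_x} that AGILS solves at each iteration, with the KKT multiplier furnished by the limit of the penalty parameters. Since $\{p_k\}$ is bounded and non‑decreasing, there is $\bar p\ge p_0>0$ with $p_k=\bar p$ for all large $k$; I would take $\lambda:=\bar p>0$ and $\bar{\epsilon}:=\varphi(\bar x,\bar y)-v_\gamma(\bar x,\bar y)$. Because $\varphi-v_\gamma\ge 0$ always and, by Proposition~\ref{prop0}, $\varphi(\bar x,\bar y)-v_\gamma(\bar x,\bar y)\le\epsilon$, this gives $0\le\bar{\epsilon}\le\epsilon$; moreover the feasibility inequality of $({\rm VP})_\gamma^{\bar{\epsilon}}$ holds at $(\bar x,\bar y)$ with equality, so the complementarity condition $\lambda\big(\varphi(\bar x,\bar y)-v_\gamma(\bar x,\bar y)-\bar{\epsilon}\big)=0$ is automatic for this $\lambda$. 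Hence only the stationarity inclusion remains to be verified.

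Pick $\{k_j\}$ with $(x^{k_j},y^{k_j})\to(\bar x,\bar y)$ and $p_{k_j-1}=p_{k_j}=\bar p$. By Lemma~\ref{lem3}, $\|(x^{k_j},y^{k_j})-(x^{k_j-1},\tilde y^{k_j-1})\|\to 0$, so $(x^{k_j-1},\tilde y^{k_j-1})\to(\bar x,\bar y)$, $\|x^{k_j}-x^{k_j-1}\|\to0$ and $\|y^{k_j}-\tilde y^{k_j-1}\|\to0$. The prox‑residuals controlling $\theta^{k_j-1/2}$ and $\tilde\theta^{k_j-1}$ through \eqref{inexacty1} and \eqref{inexacty3} are bounded by $\zeta_k\to0$ (Lemma~\ref{lem1}), so Lemma~\ref{error_bound} and the continuity of $\theta_\gamma^*$ (Proposition~\ref{partial_grad_v}) give $\theta^{k_j-1/2}\to\theta_\gamma^*(\bar x,\bar y)$ and $\tilde\theta^{k_j-1}\to\theta_\gamma^*(\bar x,\bar y)$. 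A key point is that the feasibility correction plays no role here: the quantities entering the $y$‑ and $x$‑updates at iteration $k_j-1$ are $x^{k_j-1},\tilde y^{k_j-1},\tilde\theta^{k_j-1}$ together with the freshly produced $y^{k_j},x^{k_j},\theta^{k_j-1/2}$, so no control on how often the correction fires is needed; in particular the boundedness hypothesis of Proposition~\ref{th4} is not required.

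Next I would write the optimality conditions at iteration $k_j-1$, namely
\begin{align*}
-\tfrac{1}{\beta_{k_j-1}}\big(y^{k_j}-\tilde y^{k_j-1}\big)-d_y^{k_j-1}&\in\partial_y g\big(x^{k_j-1},y^{k_j}\big)+\mathcal N_Y\big(y^{k_j}\big),\\
-\tfrac{1}{\alpha_{k_j-1}}\big(x^{k_j}-x^{k_j-1}\big)-d_x^{k_j-1}&\in\mathcal N_X\big(x^{k_j}\big),
\end{align*}
the first coming from the $1/\beta_{k_j-1}$‑strongly convex subproblem in \eqref{update_y} and the convex sum rule, the second from the projection in \eqref{update_x}. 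Using $\alpha_{k_j-1}\ge\underline{\alpha}>0$, $\beta_{k_j-1}\ge\underline{\beta}>0$ (so the first terms vanish), the definitions \eqref{dy}, \eqref{dx}, the gradient formula \eqref{valuefinc} for $\nabla v_\gamma$, continuity of $\nabla F$, $\nabla f$, $\nabla_x g$, and the limits above, one gets
\begin{align*}
d_y^{k_j-1}&\to\tfrac{1}{\bar p}\nabla_y F(\bar x,\bar y)+\nabla_y f(\bar x,\bar y)-\nabla_y v_\gamma(\bar x,\bar y),\\
d_x^{k_j-1}&\to\tfrac{1}{\bar p}\nabla_x F(\bar x,\bar y)+\nabla_x\varphi(\bar x,\bar y)-\nabla_x v_\gamma(\bar x,\bar y).
\end{align*}
Passing to the limit and invoking outer semicontinuity of $\mathcal N_X$, of $\mathcal N_Y$, and of $(x,y)\mapsto\partial_y g(x,y)+\mathcal N_Y(y)=\partial_y\tilde g(x,y)$ (the last via the subgradient inequality and continuity of $g$), this yields
\begin{align*}
-\tfrac{1}{\bar p}\nabla_x F(\bar x,\bar y)-\nabla_x\varphi(\bar x,\bar y)+\nabla_x v_\gamma(\bar x,\bar y)&\in\mathcal N_X(\bar x),\\
-\tfrac{1}{\bar p}\nabla_y F(\bar x,\bar y)-\nabla_y f(\bar x,\bar y)+\nabla_y v_\gamma(\bar x,\bar y)&\in\partial_y g(\bar x,\bar y)+\mathcal N_Y(\bar y).
\end{align*}

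To conclude, multiply both inclusions by $\bar p$ (normal cones are cones), recall $\mathcal N_{X\times Y}(\bar x,\bar y)=\mathcal N_X(\bar x)\times\mathcal N_Y(\bar y)$, and note that since $\nabla_x g(\bar x,\bar y)$ exists and $g$ is weakly convex, the subdifferential calculus gives $(\nabla_x g(\bar x,\bar y),\xi)\in\partial g(\bar x,\bar y)$ for every $\xi\in\partial_y g(\bar x,\bar y)$; combining the $x$‑ and $y$‑parts with $\lambda=\bar p$ produces
\[
0\in\nabla F(\bar x,\bar y)+\lambda\big(\nabla f(\bar x,\bar y)+\partial g(\bar x,\bar y)-\nabla v_\gamma(\bar x,\bar y)\big)+\mathcal N_{X\times Y}(\bar x,\bar y),
\]
which, with the feasibility and complementarity noted at the outset, shows $(\bar x,\bar y)$ is a KKT point of $({\rm VP})_\gamma^{\bar{\epsilon}}$. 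I expect the main obstacle to be exactly this limiting step: one must respect the alternating order so that $y^{k_j}$ enters the $x$‑subproblem of the \emph{same} iteration, argue that the merely residual‑controlled inexact proximal points $\theta^{k_j-1/2},\tilde\theta^{k_j-1}$ still converge to $\theta_\gamma^*(\bar x,\bar y)$, and pass to the limit in the subdifferential/normal‑cone inclusions although $\nabla v_\gamma$ is only continuous, not Lipschitz; the subdifferential calculus identifying the limit with an element of $\partial g(\bar x,\bar y)$ is a secondary technical point.
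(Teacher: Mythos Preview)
Your proposal is correct and follows essentially the same route as the paper's proof: write the first-order optimality conditions of the $y$- and $x$-subproblems, show that the step-difference terms vanish by Lemma~\ref{lem3} and that the inexact-$\theta$ errors vanish because $\zeta_k\to0$ together with Lemma~\ref{error_bound} and the continuity of $\theta_\gamma^*$, then pass to the limit using continuity of $\nabla F,\nabla f,\nabla_x g,\nabla v_\gamma$ and outer semicontinuity of $\partial_y\varphi$ and the normal cones; finally, invoke Proposition~\ref{prop0} and set $\bar\epsilon:=\varphi(\bar x,\bar y)-v_\gamma(\bar x,\bar y)$ with multiplier $\bar p$. The only cosmetic difference is that the paper packages the residuals into explicit vectors $\xi_x^k,\xi_y^k$ and shows $\xi_x^{k_j},\xi_y^{k_j}\to0$, whereas you argue the limits of $d_x^{k_j-1},d_y^{k_j-1}$ directly; the paper also records the product rule $\partial\varphi=\{\nabla_x\varphi\}\times\partial_y\varphi$ in place of your remark on $\partial g$.
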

\begin{proof}
	The proof builds upon techniques used in the proof of Theorem 1 in \cite{ye2023}. 
	Let $(\bar{x}, \bar{y})$ be an accumulation point of the sequence $\{(x^k,y^k)\}$, and let $\{(x^{k_j+1},y^{k_j+1})\}$ be a convergent subsequence such that $(x^{k_j+1},y^{k_j+1}) \rightarrow (\bar{x}, \bar{y})$ as $k_j \rightarrow \infty$. 
	By Lemma \ref{lem3}, we have  $\lim_{k_j \to\infty} \|(x^{k_j+1},y^{k_j+1})-(x^{k_j},\tilde{y}^{k_j}) \|=0$, which implies that $(x^{k_j},\tilde{y}^{k_j}) \to (\bar{x},\bar{y})$ .
	Since $\{p_k\}$ is bounded, there exists $k_0 > 0$ such that $p_k = \bar{p} := p_{k_0}$ for any $k \ge k_0$.

	From the update rules \eqref{update_x} and \eqref{update_y}, and the expressions for $d_x^k$ and $d_y^k$, we have    
	\begin{equation}\label{th1_eq2}
		\begin{aligned}
			\xi_x^k & \in \nabla_x F(x^{k+1},y^{k+1})+p_k \left(\nabla_x \varphi(x^{k+1},y^{k+1}) - \nabla_x v_\gamma(x^{k+1},y^{k+1})\right) +\mathcal{N}_{X}(x^{k+1}),\\
			\xi_y^k & \in \nabla_y F(x^{k},y^{k+1})+p_k \left(\partial_y \varphi(x^{k},y^{k+1}) - \nabla_y v_\gamma(x^{k},y^{k+1})\right) +\mathcal{N}_{Y}(y^{k+1}),
		\end{aligned}
	\end{equation}
	where $\xi_x^k \in \Re^n$ and $\xi_y^k \in \Re^m$ are given by
	\begin{equation}\label{th1_eq4}
		\begin{aligned}
			\xi_x^k &:=  p_k \left(\nabla_x \tilde{\psi}_{p_{k}}\left(x^{k+1}, y^{k+1} \right) - d_x^k \right)  - \frac{p_k}{\alpha_k}\left(x^{k+1}-x^k\right), \\
			\xi_y^k &:=  p_k \left( \nabla_y\left(\tilde{\psi}_{p_k}-g\right)\left(x^{k}, y^{k+1}\right) - d_y^k \right)-\frac{p_k}{\beta_k}\left(y^{k+1}-\tilde{y}^k\right).
		\end{aligned} 
	\end{equation} 
	We now show that $\xi_x^{k_j}, \xi_y^{k_j} \rightarrow 0$ as $k_j \rightarrow \infty$. First, we have 
	\begin{equation}
		\begin{aligned}
			& \| \nabla_x \tilde{\psi}_{p_{k}}\left(x^{k+1}, y^{k+1} \right) - d_x^k\| \\ \le \, & \|  \nabla_x \tilde{\psi}_{p_{k}}\left(x^{k+1}, y^{k+1} \right) - \nabla_x \tilde{\psi}_{p_{k}}\left(x^{k}, y^{k+1} \right) \| + \| \nabla_x \tilde{\psi}_{p_{k}} \left(x^{k}, y^{k+1} \right) - d_x^k \| \\
			\le \, & \|  \nabla_x \tilde{\psi}_{p_{k}}\left(x^{k+1}, y^{k+1} \right) - \nabla_x \tilde{\psi}_{p_{k}}\left(x^{k}, y^{k+1} \right) \| + (L_{f_x} + L_{g_2})C_\eta \zeta_{k}, 
		\end{aligned}
	\end{equation}
	where the last inequality follows from \eqref{lem1_eq10}. Since $p_k = \bar{p}$ for all $k \ge k_0$, and assuming that $\nabla_{x} F, \nabla_{x} f$ and $\nabla_{x} g$ are continuous on $X \times Y$, and $\nabla_{x} v_\gamma$ is continuous on $X \times Y$ (Proposition \ref{partial_grad_v}), we conclude that $ \nabla_x \tilde{\psi}_{p_{k}} =  \nabla_x \tilde{\psi}_{\bar{p}}$ is continuous on $X \times Y$. 
	As $\lim_{k_j\to\infty} (x^{k_j+1},y^{k_j+1}) = \lim_{k_j\to\infty} (x^{k_j},\tilde{y}^{k_j}) = (\bar{x}, \bar{y})$ and $\zeta_{k} \rightarrow 0$, it follows that $\nabla_x \tilde{\psi}_{p_{k_j}} (x^{k_j+1}, y^{k_j+1} ) - d_x^{k_j} \rightarrow 0$ as $k_j \rightarrow \infty$. Combining this with $ \|x^{k_j+1}-x^{k_j} \| \to 0$ and $\alpha_k \ge \underline{\alpha} > 0$, we obtain from \eqref{th1_eq4} that $\xi_x^{k_j} \rightarrow 0$ as $k_j \rightarrow \infty$.
	
	A similar argument can be applied to $\xi_y^k$. From \eqref{lem1_eq11}, we have 
	\begin{equation}\label{th1_eq5}
		\begin{aligned}
			& \|\nabla_y\left(\tilde{\psi}_{p_k}-g\right)\left(x^{k}, y^{k+1}\right) - d_y^k \| \\ \le \, & \| \nabla_y\left(\tilde{\psi}_{p_k}-g\right)\left(x^{k}, y^{k+1} \right) - \nabla_y\left(\tilde{\psi}_{p_k}-g\right)\left(x^{k}, \tilde{y}^k \right) \| + {C_\eta}\zeta_{k}/\gamma.
		\end{aligned}
	\end{equation}
	Since $p_k = \bar{p}$ for all $k \ge k_0$, and under the assumption that $\nabla_{y} F$ and $\nabla_{y} f$ are continuous on $X \times Y$, and $\nabla_{y} v_\gamma$ is continuous on $X \times Y$ (Proposition \ref{partial_grad_v}), we have that $ \nabla_y (\tilde{\psi}_{p_{k}} - g) = \nabla_y (\tilde{\psi}_{\bar{p}} - g)$ is continuous on $X \times Y$. As $\lim_{k_j\to\infty} (x^{k_j},y^{k_j+1}) = \lim_{k_j\to\infty} (x^{k_j},\tilde{y}^{k_j}) = (\bar{x}, \bar{y})$ and $\zeta_{k} \rightarrow 0$, it follows that $\nabla_y (\tilde{\psi}_{p_{k_j}} - g)(x^{k_j}, y^{k_j+1} ) - d_y^{k_j} \rightarrow 0$ as $k_j \rightarrow \infty$. Combining this with $ \|y^{k_j+1}-\tilde{y}^{k_j} \| \to 0$ and $\beta_k \ge \underline{\beta} > 0$, we conclude from \eqref{th1_eq5} that $\xi_y^{k_j} \rightarrow 0$ as $k_j \rightarrow \infty$.
	
	Now, taking the limit as $k = k_j \rightarrow \infty$ in \eqref{th1_eq2}, since $\xi_x^{k_j}, \xi_y^{k_j} \rightarrow 0$, $p_k = \bar{p}$ for all $k \ge k_0$, $\lim_{k_j\to\infty} (x^{k_j+1},y^{k_j+1}) = \lim_{k_j\to\infty} (x^{k_j},y^{k_j+1}) = \lim_{k_j\to\infty} (x^{k_j},\tilde{y}^{k_j}) = (\bar{x}, \bar{y})$, $\nabla F$, $\nabla v_\gamma$, and $\nabla_x \varphi$ are continuous on $X \times Y$, $\partial_y \varphi$ and $\mathcal{N}_{X\times Y}$ are outer semicontinuous on $X \times Y$, and $\partial \varphi = \{\nabla_x \varphi\} \times \partial_y \varphi$ on $X \times Y$, we obtain 
	\begin{equation}\label{th1_eq6}
		0\in\nabla F(\bar{x}, \bar{y})+\bar{p}\left(\partial \varphi(\bar{x},\bar{y})-\nabla v_\gamma(\bar{x}, \bar{y})\right)+\mathcal{N}_{X\times Y}(\bar{x}, \bar{y}).
	\end{equation}
	
	Finally, from Proposition \ref{prop0}, we know that the limit point $(\bar{x}, \bar{y})$ satisfies $\varphi(\bar{x}, \bar{y}) -  v_\gamma(\bar{x},\bar{y}) \le \epsilon $. Let $\bar{\epsilon}:= \varphi(\bar{x}, \bar{y}) -  v_\gamma(\bar{x},\bar{y}) \in [0, \epsilon]$. Combining this with \eqref{th1_eq6}, we have
	\begin{equation}
		\left\{
		\begin{aligned}
			&0\in\nabla F(\bar{x}, \bar{y}) + \bar{p}\left( \partial \varphi(\bar{x}, \bar{y}) - \nabla v_\gamma(\bar{x}, \bar{y})\right)+\mathcal{N}_{X\times Y}(\bar{x}, \bar{y}),\\
			&\varphi(\bar{x}, \bar{y}) - v_{\gamma}(\bar{x}, \bar{y})- \bar{\epsilon} \le 0, \qquad  \bar{p}\left( \varphi(\bar{x}, \bar{y}) - v_{\gamma}(\bar{x}, \bar{y})- \bar{\epsilon} \right)=0,
		\end{aligned}\right.
	\end{equation}
	which implies that $(\bar{x},\bar{y})$ is a KKT point of problem $({\rm VP})_\gamma^{\bar{\epsilon}}$ with $\bar{\epsilon}\le \epsilon$.
\end{proof}

As established in Lemma \ref{lem3}, the Ostrowski condition \(\lim_{k\to\infty}\|(x^{k+1},y^{k+1})-(x^k,\tilde{y}^k)\|=0\) is satisfied. By combining this result with Theorem \ref{th1} and \cite[Proposition 8.3.10]{facchinei2007finite}, we derive the following convergence result:

\begin{corollary}
	Let $\epsilon > 0$. Suppose the sequence ${p_k}$ is bounded, and the sequence $\{(x^k, y^k)\}$ has an isolated accumulation point $(\bar{x}, \bar{y})$, meaning there exists a neighborhood $Z$ of $(\bar{x}, \bar{y})$ such that $(\bar{x}, \bar{y})$ is the only accumulation point of $\{(x^k, y^k)\}$ in $Z$. Then the sequence $\{(x^k, y^k)\}$ converges to $(\bar{x}, \bar{y})$, which is a KKT point of $({\rm VP})_\gamma^{\bar{\epsilon}}$ for some $\bar{\epsilon} \leq \epsilon$.
\end{corollary}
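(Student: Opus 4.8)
The plan is to combine the ``every accumulation point is a KKT point'' conclusion of Theorem~\ref{th1} with the classical Ostrowski-type convergence criterion of \cite[Proposition 8.3.10]{facchinei2007finite}: if a sequence $\{z^k\}$ satisfies $\|z^{k+1}-z^k\|\to 0$ and possesses an isolated accumulation point $z^*$, then the whole sequence converges to $z^*$. With $z^k=(x^k,y^k)$ and $z^*=(\bar x,\bar y)$, the argument splits into two parts: (a) identifying $(\bar x,\bar y)$ as a KKT point of $({\rm VP})_\gamma^{\bar{\epsilon}}$ for some $\bar{\epsilon}\le\epsilon$, which is exactly Theorem~\ref{th1} and is available because $\{p_k\}$ is bounded by hypothesis; and (b) verifying the consecutive-difference condition $\|(x^{k+1},y^{k+1})-(x^k,y^k)\|\to 0$.

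For (b), Lemma~\ref{lem3} already provides $\|(x^{k+1},y^{k+1})-(x^k,\tilde y^k)\|\to 0$, so it remains to show $\|\tilde y^k-y^k\|\to 0$. Since $\tilde y^{k+1}$ differs from $y^{k+1}$ only on iterations where the feasibility correction (Algorithm~\ref{alg2}) is invoked and its candidate accepted, it suffices to prove that the feasibility correction is triggered only finitely many times. Boundedness of $\{p_k\}$ handles the easy part: the penalty parameter is increased exactly at Case~2 iterations and at Case~3 iterations where the candidate is rejected, so these occur only finitely often. The remaining possibility --- infinitely many accepted Case~3 iterations --- is excluded by the feasibility-correction analysis of Section~\ref{sec4.2}, culminating in Proposition~\ref{th4}, whose descent-based contradiction shows that once the (eventually constant) penalty parameter exceeds a fixed multiple of $\sup_k|F(x^k,y^k)|/\epsilon$ --- here one uses that the iterates are bounded, so that $\sup_k|F(x^k,y^k)|<\infty$ --- no candidate can be accepted. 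Hence $\tilde y^k=y^k$ for all large $k$, and Lemma~\ref{lem3} gives $\|(x^{k+1},y^{k+1})-(x^k,y^k)\|\to 0$. Putting (a) and (b) together, \cite[Proposition 8.3.10]{facchinei2007finite} yields $(x^k,y^k)\to(\bar x,\bar y)$, and Theorem~\ref{th1} then gives the stated KKT property.

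The main obstacle is precisely the passage in (b) from the $\tilde y^k$-version of the Ostrowski condition in Lemma~\ref{lem3} to the $y^k$-version. This is genuinely substantive rather than cosmetic: whenever the feasibility correction fires, the accepted candidate $\tilde y^{k+1}$ is, by \eqref{inexacty_y} and Lemma~\ref{lem4}, asymptotically a lower-level solution at $x^{k+1}$, whereas the triggering condition $\|y^{k+1}-\theta^{k+1}\|>c_y\gamma/p_k$ keeps $y^{k+1}$ bounded away from such a solution; so $\tilde y^{k+1}-y^{k+1}$ cannot vanish along any subsequence on which the correction keeps firing, and termination of the correction is indispensable. (Applying the Ostrowski criterion instead to the interleaved sequence $\ldots,(x^k,\tilde y^k),(x^{k+1},y^{k+1}),(x^{k+1},\tilde y^{k+1}),\ldots$ does not circumvent this, since its consecutive differences still contain the terms $\tilde y^{k+1}-y^{k+1}$.) This is the only place where the finer structure developed in Section~\ref{sec4.2}, rather than merely the boundedness of $\{p_k\}$, enters the proof of the corollary.
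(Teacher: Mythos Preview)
Your high-level approach is exactly the paper's: combine Lemma~\ref{lem3}, Theorem~\ref{th1}, and the Ostrowski criterion \cite[Proposition~8.3.10]{facchinei2007finite}. The paper in fact states the Ostrowski condition in the $\tilde y^k$-form that Lemma~\ref{lem3} delivers and invokes Facchinei--Pang directly, glossing over the $\tilde y^k$ versus $y^k$ mismatch that you correctly flag.

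The problem is with your closing of that gap. You invoke Proposition~\ref{th4} to conclude that the feasibility correction is triggered only finitely often, so that $\tilde y^k=y^k$ eventually. But Proposition~\ref{th4} needs two ingredients that are \emph{not} among the corollary's hypotheses: (i) boundedness of the whole iterate sequence $\{(x^k,y^k)\}$, and (ii) that the eventually constant penalty value $\bar p$ satisfies $\bar p\ge 4M_F/\epsilon$. An isolated accumulation point does not imply (i); and since the corollary only assumes $\{p_k\}$ is bounded, $\bar p$ may be strictly smaller than $4M_F/\epsilon$, so (ii) can fail. Your parenthetical ``here one uses that the iterates are bounded'' and the clause ``once the (eventually constant) penalty parameter exceeds a fixed multiple of $\sup_k|F(x^k,y^k)|/\epsilon$'' are precisely these two unwarranted assumptions smuggled in. Under the corollary as stated, Proposition~\ref{th4} is simply not available, and with it your argument that accepted Case~3 iterations must terminate collapses.

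In short: you have correctly located a gap that the paper's one-line justification leaves open, but your proposed repair imports hypotheses that the corollary does not grant.
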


\begin{remark}
	Our algorithm AGILS offers significant advantages over the MEHA in \cite{liumoreau}, particularly regarding step-size selection and convergence guarantees.
	First, AGILS allows a larger, explicit, and easily computable range of step sizes. In contrast, the step sizes for MEHA must be small and lack a clear, practical selection range. Second, AGILS ensures that any accumulation point of the iterates is a feasible KKT point of $({\rm VP})_\gamma^{\bar{\epsilon}}$ for some  $\bar{\epsilon}\le\epsilon$. Conversely, MEHA's convergence analysis only ensures that the stationary residual of penalized problem \eqref{penprob} converges to zero, without guaranteeing the feasibility of the iterates.
	
\end{remark}

\section{Sequential convergence under KL property}

This section establishes the sequential convergence of AGILS (Algorithm \ref{alg1}) under the Kurdyka-{\L}ojasiewicz (KL) property. Our analysis focuses on the scenario where the feasibility correction procedure is applied only a finite number of times. (see Proposition~\ref{th4} for a sufficient condition ensuring this).
We begin by recalling the definition of the KL property. Let $\zeta \in[0,+\infty)$ and $\Phi_{\zeta}$ represent the set of all concave and continuous functions $\phi\ :\ [0, \zeta)\to[0,+\infty)$ that meet the following conditions: (a) $\phi(0) = 0$, (b) $\phi$ is $C^1$ on $(0, \zeta)$ and continuous at 0, (c) $\phi'(s)>0$ for all $s\in(0,\zeta)$.
\begin{definition}[Kurdyka-{\L}ojasiewicz property]
	Consider $h:\Re^d\to (-\infty,+\infty]$ as proper and lower semicontinuous. The function $h$ is said to have the Kurdyka-{\L}ojasiewicz (KL)
	property at $\bar{x}\in{\rm dom} \, \partial h := \left\{x\in\Re^d\ |\ \partial  h(x)\neq\emptyset\right\}$, if there exist $\zeta\in(0,+\infty]$, a neighborhood $\mathcal{U}$ of $\bar{x}$ and a function $\phi \in \Phi_{\zeta}$, such that for all $x\in \mathcal{U}\bigcap\left\{x\in\Re^d\ |\ h(\bar{x})< h (x)< h (\bar{x})+\zeta\right\}$,
	the following inequality holds
	\begin{equation*}
		\phi'(h(x)-h(\bar{x})){\rm dist}(0,\partial h(x))\ge1.
	\end{equation*}
	If $h$ satisfies the KL property at each point of ${\rm dom}\,\partial h$ then $h$ is referred to as a KL function.
\end{definition}

In addition, when the KL property holds for all points in a compact set, the uniformized KL property is applicable, refer to Lemma 6 in \cite{bolte2014proximal} for further details.
\begin{lemma}[Uniformized KL property]\label{lem2}
	Given a compact set $D$ and a proper and lower semicontinuous function $h\ :\ \Re^d \to (-\infty,+\infty]$, suppose that $h$ is constant on $D$ and satisfies the KL property at each point of $D$. Then, there exist $\epsilon$, $\zeta$ and $\phi \in\Phi_{\zeta}$ such that for all $\bar{x}\in D$ and $x\in\left\{ x\in\Re^d\,|\,{\rm dist}(x,D)<\epsilon,\ h(\bar{x})<h(x)<h(\bar{x})+\zeta\right\}$,
	\begin{equation*}
		\phi'(h(x)-h(\bar{x})){\rm dist}(0,\partial h(x))\ge 1.
	\end{equation*}
\end{lemma}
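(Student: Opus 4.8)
The plan is to prove this by the standard compactness argument that packages the pointwise KL data over $D$ into uniform data, following \cite[Lemma 6]{bolte2014proximal}. Since $h$ is constant on $D$, write $c$ for its constant value there; then for every $\bar x \in D$ the level strip $\{h(\bar x) < h(x) < h(\bar x) + \zeta\}$ equals $\{c < h(x) < c + \zeta\}$, independently of $\bar x$, and this is exactly what makes a single choice of $\phi$ possible.

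First I would apply the KL property at each $\bar x \in D$ (this presupposes $D \subseteq \mathrm{dom}\,\partial h$, as the hypothesis implicitly does): there exist $\epsilon_{\bar x} > 0$, $\zeta_{\bar x} \in (0, +\infty]$ and $\phi_{\bar x} \in \Phi_{\zeta_{\bar x}}$ such that $\phi_{\bar x}'(h(x) - c)\,\mathrm{dist}(0, \partial h(x)) \ge 1$ for all $x$ in the ball $B(\bar x, \epsilon_{\bar x})$ satisfying $c < h(x) < c + \zeta_{\bar x}$. The half-radius balls $\{B(\bar x, \epsilon_{\bar x}/2)\}_{\bar x \in D}$ form an open cover of the compact set $D$, so finitely many points $\bar x_1, \dots, \bar x_p$ suffice: $D \subseteq \bigcup_{i=1}^p B(\bar x_i, \epsilon_i/2)$ with $\epsilon_i := \epsilon_{\bar x_i}$.

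Then I would set $\epsilon := \tfrac12 \min_{1 \le i \le p} \epsilon_i$, $\zeta := \min_{1 \le i \le p} \zeta_{\bar x_i}$, and $\phi := \sum_{i=1}^p \phi_{\bar x_i}$ (restricted to $[0,\zeta)$). Being a finite sum of functions from the $\Phi_{\zeta_{\bar x_i}}$, $\phi$ is concave, continuous, vanishes at $0$, is $C^1$ on $(0,\zeta)$, and has strictly positive derivative there, so $\phi \in \Phi_\zeta$. Now fix any $x$ with $\mathrm{dist}(x, D) < \epsilon$ and $c < h(x) < c + \zeta$. Picking $\bar z \in D$ with $\|x - \bar z\| < \epsilon$ and an index $i$ with $\bar z \in B(\bar x_i, \epsilon_i/2)$, the triangle inequality gives $\|x - \bar x_i\| < \epsilon + \epsilon_i/2 \le \epsilon_i$, and also $h(x) < c + \zeta \le c + \zeta_{\bar x_i}$; hence the KL inequality at $\bar x_i$ yields $\phi_{\bar x_i}'(h(x) - c)\,\mathrm{dist}(0, \partial h(x)) \ge 1$. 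Since each $\phi_{\bar x_j}'$ is positive on $(0,\zeta)$, we have $\phi'(h(x)-c) = \sum_{j=1}^p \phi_{\bar x_j}'(h(x)-c) \ge \phi_{\bar x_i}'(h(x)-c)$, and therefore $\phi'(h(x) - c)\,\mathrm{dist}(0, \partial h(x)) \ge 1$. Recalling that $h(\bar x) = c$ for all $\bar x \in D$, this is precisely the uniformized inequality claimed.

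The only delicate points are bookkeeping ones: shrinking the covering radii by the factor $1/2$ so that the $\epsilon$-neighborhood of $D$ still lands inside one of the original balls $B(\bar x_i, \epsilon_i)$, and checking that the summed desingularizing function stays in $\Phi_\zeta$ — both are routine. There is no genuine analytic obstacle here; the statement merely repackages the local KL property over a compact level set, and coincides with \cite[Lemma 6]{bolte2014proximal}, so I would present it as a restatement with the compactness argument sketched above.
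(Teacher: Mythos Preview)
Your argument is correct and is precisely the standard compactness proof from \cite[Lemma~6]{bolte2014proximal}. The paper itself does not supply a proof of this lemma but merely cites that reference, so your proposal in fact provides strictly more than the paper does here.
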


To establish the sequential convergence of AGILS (Algorithm \ref{alg1}), we further assume that $\partial_y g(x,y)$ admits a Lipschitz continuity with respect to $x$ on $X$.
\begin{assumption}\label{asup5}
	$\partial_y g(x,y)$ is $L_{g_y}$-Lipschitz  continuous with respect to $x$ on $X$ for any fixed $y \in Y$. That is, for any $x, x' \in X$,
	\[
	\partial_y g(x',y) \subseteq \partial_y g(x,y) + L_{g_y}\|x - x'\|\mathbb{B}.
	\]
\end{assumption}

Assumption \ref{asup5} is not restrictive. For example, if $g(x,y) = \sum_{i = 1}^p Q_i(x)P_i(y)$, where $Q_i(x): X \rightarrow \mathbb{R}$ is Lipschitz continuous and strictly positive on $X$, and $P_i(y) : Y \rightarrow \mathbb{R}$, then $g(x,y)$ satisfies Assumption \ref{asup5}. Specific examples include $g(x,y) = x\| y\|_1$ on $\mathbb{R}_{++} \times \mathbb{R}^m$ and $g(x,y) = \sum_{j = 1}^J x_j\| y^{(j)}\|_2$ on $\mathbb{R}^J_{++} \times \mathbb{R}^m$.

Consider the sequences  $\{(x^k, y^k)\}$ and $\{p_k\}$  generated by AGILS (Algorithm \ref{alg1}) under the step size conditions specified in Section \ref{sec4.2} and \ref{sec4.3}, where $\gamma \in (0, 1/(\rho_{f_2} +\rho_{g_2}) )$, the step sizes satisfy $\alpha_k \in [\underline{\alpha}, 2/(L_{\psi_{x,k}} + c_\alpha)]$ and $\beta_k \in [\underline{\beta}, 2/(L_{\psi_{y,k}} + c_\beta)]$, where $L_{\psi_{x,k}} = L_{F_x}/p_k+L_{f_x}+L_{g_1}+\rho_{f_1} + \rho_{g_1}$, $L_{\psi_{y,k}} = L_{F_y}/p_k+L_{f_y}$ and $\underline{\alpha}, \underline{\beta}, c_\alpha, c_\beta$ are positive constants. 
To analyze sequential convergence, we define the following merit function $E_p(z, u)$, where $z := (x,y)$, 
\begin{equation}\label{def_merit}
	E_{p}(z,u) := G_p(z) - \langle u,x \rangle + H(u, y), \qquad \text{with}
\end{equation}
\[
G_p(z) := \frac{1}{p} \left( F(z) - \underline{F} \right) + \varphi(z) + \frac{\rho_{v_1}}{2}\|x\|^2, \quad  H(u, y) := \sup_x \{ \langle u,x \rangle - \tilde{v}_\gamma(x,y)\},
\] 
where $\rho_{v_1} = \rho_{f_1} + \rho_{g_1} + c_\alpha/4$ and $\tilde{v}_\gamma(x,y) := {v}_\gamma(x,y) + (\rho_{v_1}/2) \|x\|^2$. By Proposition \ref{partial_grad_v} and under Assumption \ref{asup3}, $\tilde{v}_\gamma(x,y)$ is strongly convex with respect to $x$.  $H(u,y)$ is the Fenchel conjugate of $\tilde{v}_\gamma(x,y)$ with respect to variable $x$  and is differentiable due to the differentiability  of ${v}_\gamma(x,y)$ and the strong convexity of ${v}_\gamma(x,y)$  with respect to $x$.
For the sequence $\{(x^k, y^k)\}$  generated by AGILS (\cref{alg1}),  let $u ^k = \nabla_x \tilde{v}_\gamma(x^k,y^{k+1})$. We have $\nabla_{u} H(u^k, y^{k+1}) = x^k$ and $\nabla_{y} H(u^k, y^{k+1}) = \nabla_y \tilde{v}_\gamma(x^k,y^{k+1})$.

The following lemma establishes a relaxed sufficient decrease property and relative error condition for the merit function $E_{p}(z,u)$ at $(z^{k+1}, u^k)$.

\begin{lemma}\label{lem5}
	Under Assumptions \ref{asup1}, \ref{asup2}, \ref{asup3} and  \ref{asup5}. Let $\epsilon > 0$. Let $\{(x^k, y^k)\}$ and $\{p_k\}$ be the sequences generated by AGILS under the same conditions as in Section \ref{sec4.2}, and let $E_{p}(z,u)$ be defined as in \eqref{def_merit}, In the case where $\tilde{y}^k = y^k$, 
	there exist constants $a, b >0$ such that
	\begin{equation}\label{lem5_eq_dec}
		\begin{aligned}
			E_{p_k}(z^{k+1}, u^k) +  a\left\|z^{k+1}-z^k\right\|^2 \le E_{p_k}(z^{k}, u^{k-1}) + \nu_{k},
		\end{aligned}
	\end{equation}
	where  $\nu_k =  \left ( \frac{(L_{f_x} + L_{g_2})^2}{ c_\alpha} + \frac{1}{c_\beta\gamma^2} \right)C_\eta^2\zeta_{k}^2$ with
	$\zeta_{k} := \max\{ s_k,  ( \prod_{i=0}^{k} \tau_i ) \mathcal{G}(\theta^{0}, x^{0}, y^{0})\}$, and
	\begin{equation}\label{lem5_eq_opt_err}
		\begin{aligned}
			{\rm dist}\left(0, \partial E_{p_k}\left( z^{k+1}, u^k \right) \right) \le b\|z^{k+1} - z^k\| + \tilde{\nu}_{k},
		\end{aligned}
	\end{equation}
	where $ \tilde{\nu}_{k} = (L_{f_x} + L_{g_2} + 1/\gamma)C_\eta \zeta_{k}$.
\end{lemma}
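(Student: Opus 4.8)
The plan is to trade on the Fenchel--Young structure hard-wired into $E_p$. Since $u^k=\nabla_x\tilde{v}_\gamma(x^k,y^{k+1})$ and $\tilde{v}_\gamma(\cdot,y^{k+1})$ is strongly convex, the supremum defining $H(u^k,y^{k+1})$ is attained exactly at $x^k$, so $H(u^k,y^{k+1})=\langle u^k,x^k\rangle-\tilde{v}_\gamma(x^k,y^{k+1})$; substituting this and using $G_{p_k}=\tilde{\psi}_{p_k}+\tilde{v}_\gamma$ yields the key identity
\[
E_{p_k}(z^{k+1},u^k)=\Psi_k(x^{k+1})-\Psi_k(x^k)+\tilde{\psi}_{p_k}(x^k,y^{k+1}),\qquad \Psi_k(x):=G_{p_k}(x,y^{k+1})-\langle u^k,x\rangle .
\]
Here $\nabla\Psi_k$ is $(L_{\psi_{x,k}}+c_\alpha/4)$-Lipschitz (the extra $c_\alpha/4$ being exactly $\rho_{v_1}-\rho_{f_1}-\rho_{g_1}$), and $\nabla\Psi_k(x^k)=\nabla_x\tilde{\psi}_{p_k}(x^k,y^{k+1})$. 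Viewing the $x$-update \eqref{update_x} as an inexact projected-gradient step for $\Psi_k+\delta_X$, with inexactness $\|\nabla\Psi_k(x^k)-d_x^k\|\le(L_{f_x}+L_{g_2})C_\eta\zeta_k$ (as in \eqref{lem1_eq10}), the descent lemma together with $\alpha_k\le 2/(L_{\psi_{x,k}}+c_\alpha)$ and a Young split gives $\Psi_k(x^{k+1})-\Psi_k(x^k)\le-\tfrac{c_\alpha}{8}\|x^{k+1}-x^k\|^2+\tfrac{(L_{f_x}+L_{g_2})^2C_\eta^2}{c_\alpha}\zeta_k^2$.

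To finish \eqref{lem5_eq_dec} I would combine the displayed identity with \eqref{lem1_eq8} specialized to $\tilde{y}^k=y^k$ (which reads $\tilde{\psi}_{p_k}(x^k,y^{k+1})\le\tilde{\psi}_{p_k}(z^k)-\tfrac{c_\beta}{4}\|y^{k+1}-y^k\|^2+\tfrac{C_\eta^2}{c_\beta\gamma^2}\zeta_k^2$) and the elementary bound $\tilde{\psi}_{p_k}(z^k)=\inf_u E_{p_k}(z^k,u)\le E_{p_k}(z^k,u^{k-1})$, giving \eqref{lem5_eq_dec} with $a=\min\{c_\alpha/8,c_\beta/4\}$ and $\nu_k$ exactly as stated. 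For the relative-error bound \eqref{lem5_eq_opt_err} I would build an explicit element of $\partial E_{p_k}(z^{k+1},u^k)$ block by block. The $u$-block is immediate: $\partial_u E_{p_k}(z^{k+1},u^k)\ni-x^{k+1}+\nabla_u H(u^k,y^{k+1})=x^k-x^{k+1}$. For the $x$-block, the projection optimality in \eqref{update_x} furnishes a normal-cone element; adding it to $\nabla_x G_{p_k}(x^{k+1},y^{k+1})-u^k$, then using $u^k=\nabla_x v_\gamma(x^k,y^{k+1})+\rho_{v_1}x^k$, the Lipschitz continuity of $\nabla_x F,\nabla_x f,\nabla_x g$ to move the arguments from $x^{k+1}$ back to $x^k$, and $\|\nabla_x\varphi(x^k,\theta^{k+1/2})-\nabla_x v_\gamma(x^k,y^{k+1})\|\le(L_{f_x}+L_{g_2})C_\eta\zeta_k$, produces $\sigma_x^k$ with $\|\sigma_x^k\|\le C\|x^{k+1}-x^k\|+(L_{f_x}+L_{g_2})C_\eta\zeta_k$. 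The $y$-block is the delicate one: the prox optimality in \eqref{update_y} (with $\tilde{y}^k=y^k$, $\tilde{\theta}^k=\theta^k$) gives $w^k\in\partial_y g(x^k,y^{k+1})$ and $n^k\in\mathcal{N}_Y(y^{k+1})$ with $w^k+n^k=-d_y^k-\beta_k^{-1}(y^{k+1}-y^k)$; Assumption \ref{asup5} replaces $w^k$ by some $w'^k\in\partial_y g(x^{k+1},y^{k+1})$ at cost $L_{g_y}\|x^{k+1}-x^k\|$, and I take $\sigma_y^k=\tfrac1{p_k}\nabla_y F(x^{k+1},y^{k+1})+\nabla_y f(x^{k+1},y^{k+1})+w'^k+n^k+\nabla_y H(u^k,y^{k+1})\in\partial_y E_{p_k}(z^{k+1},u^k)$. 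Since $\nabla_y H(u^k,y^{k+1})$ is governed by $\theta^*_\gamma(x^k,y^{k+1})$ whereas $d_y^k$ only carries $\theta^k\approx\theta^*_\gamma(x^k,y^k)$, I bridge the gap via Lemma \ref{error_bound} ($\|\theta^k-\theta^*_\gamma(x^k,y^k)\|\le C_\eta\zeta_k$) and Lemma \ref{Lip_theta} ($\|\theta^*_\gamma(x^k,y^{k+1})-\theta^*_\gamma(x^k,y^k)\|\le L_{\theta^*}\|y^{k+1}-y^k\|$), plus Lipschitzness of $\nabla_y F,\nabla_y f$, to obtain $\|\sigma_y^k\|\le C'\|z^{k+1}-z^k\|+\tfrac{C_\eta}{\gamma}\zeta_k$. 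Summing the three block bounds yields $\operatorname{dist}(0,\partial E_{p_k}(z^{k+1},u^k))\le b\|z^{k+1}-z^k\|+(L_{f_x}+L_{g_2}+1/\gamma)C_\eta\zeta_k$, i.e.\ \eqref{lem5_eq_opt_err}.

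The main obstacle, as I see it, is twofold. First, one must recognize that the conjugate variable has to be anchored precisely at $(x^k,y^{k+1})$ --- the \emph{old} $x$ and the \emph{new} $y$ --- so that the Fenchel--Young equality collapses $E_{p_k}(z^{k+1},u^k)$ into $\Psi_k(x^{k+1})-\Psi_k(x^k)+\tilde{\psi}_{p_k}(x^k,y^{k+1})$; any other anchoring destroys the telescoping and leaves an unbounded Bregman term in $\tilde{v}_\gamma$, whose $x$-gradient is not Lipschitz. Second, in the error estimate one must carefully track the $y$-argument mismatch between $\theta^k$ (tied to $y^k$) inside $d_y^k$ and $\theta^*_\gamma(x^k,y^{k+1})$ entering through $\nabla_y H$ --- this is exactly where Lemmas \ref{error_bound}--\ref{Lip_theta} and Assumption \ref{asup5} are indispensable, and without the Lipschitz continuity of $\theta^*_\gamma$ in $y$ the residual could not be bounded by $\|z^{k+1}-z^k\|$ up to the summable term.
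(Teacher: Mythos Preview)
Your proof is correct and follows essentially the same route as the paper: the decrease inequality is obtained by combining the Fenchel--Young identity $H(u^k,y^{k+1})=\langle u^k,x^k\rangle-\tilde v_\gamma(x^k,y^{k+1})$ (your $\Psi_k$ reformulation is just a convenient repackaging of the paper's inequality \eqref{lem5_eq4}) with the $y$-descent \eqref{lem1_eq8} and the Fenchel--Young inequality $\tilde\psi_{p_k}(z^k)\le E_{p_k}(z^k,u^{k-1})$, and the relative-error bound is built block by block exactly as the paper does, invoking Assumption~\ref{asup5} for the $g$-subgradient, Lemma~\ref{error_bound} for $\theta^k$, and Lemma~\ref{Lip_theta} for the $y$-Lipschitzness of $\theta^*_\gamma$. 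The constants you obtain, $a=\min\{c_\alpha/8,c_\beta/4\}$ and the explicit form of $\nu_k,\tilde\nu_k$, match the paper's.
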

\begin{proof}
	We begin by deriving the first inequality. In this case, \( \tilde{y}^k = y^k \), the inequality~\eqref{lem1_eq8} simplifies to $\tilde{\psi}_{p_k}\left(x^{k}, y^{k+1}\right) \leq \tilde{\psi}_{p_k}\left(x^{k}, y^k\right)  -\left(1/\beta_k - L_{\psi_{y,k}}/2 - c_\beta/4\right) $\\
	$\left\|y^{k+1} - y^k\right\|^2 + C_\eta^2/(c_\beta\gamma^2) \zeta_k^2
	$, which implies 
	\begin{equation}\label{lem5_eq3}
		\begin{aligned}
			& G_{p_k}(x^k, y^{k+1}) -  {\tilde{v}}_\gamma(x^k, y^{k+1})  \\ \leq \, & G_{p_k}(x^k, {y}^{k}) -  {\tilde{v}}_\gamma(x^k, {y}^{k})  - \left(\frac{1}{\beta_k}-\frac{L_{\psi_{y,k}}}{2}- \frac{c_\beta}{4}\right)\left\|y^{k+1}-y^k\right\|^2 + \frac{C_\eta^2}{c_\beta\gamma^2} \zeta_{k}^2. 
		\end{aligned}
	\end{equation}
	Next, by Lemma~\ref{lem1}, \( \tilde{\psi}_{p_k}(x^{k+1}, y^{k+1})\le \tilde{\psi}_{p_k}(x^k, y^{k+1}) 
	+ \langle \nabla_x \tilde{\psi}_{p_k}(x^k, y^{k+1}) - d_x^k,\, x^{k+1} - x^k \rangle 
	- \left(1/\alpha_k - L_{\psi_{x,k}}/2\right)\|x^{k+1} - x^k\|^2 \). Moreover, since the gradients \( \nabla_x F(x, y^{k+1}) \), \( \nabla_x f(x, y^{k+1}) \), and \( \nabla_x g(x, y^{k+1}) \) are $L_{F_x}$, $L_{f_x}$ and $L_{g_1}$-Lipschitz continuous with respect to variable $x$ on $X$, respectively, we have
	\begin{equation}
		\begin{aligned}
			& G_{p_k}(x^{k+1}, y^{k+1}) + \left(\frac{1}{\alpha_k}-\frac{ L_{F_x}/p_k+L_{f_x}+L_{g_1} + \rho_{v_1} }{2}\right)\left\|x^{k+1}-x^k\right\|^2\\ \le\, & G_{p_k}(x^{k}, y^{k+1}) + \left\langle \nabla_{x} G_{p_k} (x^{k}, y^{k+1} ) - d_{x}^k, x^{k+1} - x^k \right \rangle.
		\end{aligned}
	\end{equation}
	Using the definitions of $G_{p}(z)$ and $\tilde{v}_\gamma(z)$,  along with the formulas for $\nabla_x{v}_\gamma(z)$ and $d_x^k$, as well as  \eqref{lem1_eq10} and applying the Cauchy-Schwarz inequality, we further derive
	\begin{equation}
		\begin{aligned}
			& G_{p_k}(z^{k+1}) - \langle \nabla_{x} \tilde{v}_\gamma(x^k, y^{k+1}), x^{k+1} - x^k \rangle + \left(\frac{1}{\alpha_k}-\frac{ L_{E_{x,k}} }{2} - \frac{c_\alpha}{4}\right)\left\|x^{k+1}-x^k\right\|^2\\ \le\, & G_{p_k}(x^{k}, y^{k+1}) + \frac{1}{c_\alpha}(L_{f_x} + L_{g_2})^2C_\eta^2 \zeta_{k}^2,
		\end{aligned}
	\end{equation}
	where $ L_{E_{x,k}}=L_{F_x}/p_k+L_{f_x}+L_{g_1}+\rho_{v_1}$.
	Combining this result with \eqref{lem5_eq3}, we have
	\begin{equation}\label{lem5_eq4}
		\begin{aligned}
			& G_{p_k}(z^{k+1}) - \tilde{v}_\gamma(x^k, y^{k+1}) - \langle \nabla_{x} \tilde{v}_\gamma(x^k, y^{k+1}), x^{k+1} - x^k \rangle \\ \le\, & G_{p_k}(z^k) - \tilde{v}_\gamma(z^{k}) + \left( \frac{1}{ c_\alpha}(L_{f_x} + L_{g_2})^2 + \frac{1}{c_\beta\gamma^2} \right) C_\eta^2\zeta_{k}^2 \\ &  - \left(\frac{1}{\alpha_k}- \frac{ L_{E_{x,k}} }{2} -\frac{c_\alpha}{4}\right)\left\|x^{k+1}-x^k\right\|^2 - \left(\frac{1}{\beta_k}-\frac{L_{\psi_{y,k}}}{2}-\frac{c_\beta}{4}\right)\left\|y^{k+1}-y^k\right\|^2 .
		\end{aligned}
	\end{equation} 
	By the convexity of $\tilde{v}_\gamma$ with respect to $x$ and $u^k = \nabla_x \tilde{v}_\gamma(x^k, y^{k+1})$, we have the identity $ - \tilde{v}_\gamma(x^k, y^{k+1}) + \langle \nabla_{x} \tilde{v}_\gamma(x^k, y^{k+1}),  x^k \rangle = H(u^k, y^{k+1}) $. Thus, the left-hand side of inequality~\eqref{lem5_eq4} equals $
	G_{p_k}(z^{k+1}) - \langle u^k, x^{k+1} \rangle + H(u^k, y^{k+1})$.
	Furthermore, since $ - \tilde{v}_\gamma(x^k, y^k) \le - \langle u^{k-1}, x^k \rangle + H(u^{k-1}, y^k)$, the term $G_{p_k}(z^k) - \tilde{v}_\gamma(z^{k})$ on the right-hand side of  \eqref{lem5_eq4} equals $
	G_{p_k}(z^k) -\langle u^{k-1}, x^k \rangle + H(u^{k-1}, y^k)$.
	
	Substituting these into \eqref{lem5_eq4} and recalling the definition of the merit function $E_{p_k}(z, u)$ directly yields the following inequality
	\begin{equation*} 
		\begin{aligned}
			&E_{p_k}(z^{k+1}, u^k) - E_{p_k}(z^k, u^{k-1}) - \left( \frac{1}{ c_\alpha}(L_{f_x} + L_{g_2})^2 + \frac{1}{c_\beta\gamma^2} \right) C_\eta^2\zeta_{k}^2\\ \le\, &    \left(\frac{1}{\alpha_k}- \frac{ L_{E_{x,k}} }{2} -\frac{c_\alpha}{4}\right)\left\|x^{k+1}-x^k\right\|^2 + \left(\frac{1}{\beta_k}-\frac{L_{\psi_{y,k}}}{2}-\frac{c_\beta}{4}\right)\left\|y^{k+1}-y^k\right\|^2.
		\end{aligned}
	\end{equation*}
	For $\alpha_k \in [\underline{\alpha},\ 2/(L_{\psi_{x,k}} + c_\alpha)]$ and recalling $L_{E_{x,k}}= L_{\psi_{x,k}} + c_\alpha/4$, we have
	\[
	\frac{1}{\alpha_k} - \frac{L_{E_{x,k}}}{2} - \frac{c_\alpha}{4} \ge \frac{L_{\psi_{x,k}}}{2} + \frac{c_\alpha}{2} - \frac{L_{\psi_{x,k}}}{2} - \frac{c_\alpha}{8} - \frac{c_\alpha}{4} = \frac{c_\alpha}{8}.
	\]
	Similarly, for  $\beta_k \in [\underline{\beta},\ 2/(L_{\psi_{y,k}} + c_\beta)]$,  the second coefficient satisfies
	\[
	\frac{1}{\beta_k} - \frac{L_{\psi_{y,k}}}{2} - \frac{c_\beta}{4}
	\ge \frac{L_{\psi_{y,k}}}{2} + \frac{c_\beta}{2}  - \frac{L_{\psi_{y,k}}}{2} - \frac{c_\beta}{4} =  \frac{c_\beta}{4}.
	\]
	Inserting these bounds into the previous descent estimate yields the inequality
	\begin{equation}\label{lem5_add1}
		\begin{aligned}
			E_{p_k}(z^{k+1}, u^k) +  c_\alpha/8 \cdot \left\|x^{k+1}-x^k\right\|^2 + c_\beta/4 \cdot \left\|y^{k+1}-y^k\right\|^2  \le E_{p_k}(z^{k}, u^{k-1}) + \nu_{k},
		\end{aligned}
	\end{equation}
	where $\nu_k =  \left( \frac{(L_{f_x} + L_{g_2})^2}{ c_\alpha} + \frac{1}{c_\beta\gamma^2}\right)C_\eta^2 \zeta_{k}^2$, and \eqref{lem5_eq_dec} follows.
	
	We now establish the relative error condition of $E_{p}(z,u)$. To begin, we provide the subdifferential characterization of $E_{p}(z,u)$,
	\begin{equation}\label{partial_E}
		\begin{aligned}
			\partial 	E_{p}(z, u) = \begin{pmatrix}
				\nabla_x G_p(z) - u + \mathcal{N}_{X}(x)\\ \partial_y G_p(z) + \nabla_{y}H(u,y) + \mathcal{N}_{Y}(y) \\ \nabla_u H(u,y) - x
			\end{pmatrix}.
		\end{aligned}
	\end{equation}
	Using the update rules of $x^{k+1}$ and $y^{k+1}$ in \eqref{update_x} and \eqref{update_y} together with the assumption $\tilde{y}^k = y^k$, and the formulas for $d_x^k$ and $d^k_y$ in \eqref{dx} and \eqref{dy}, we have 
	\begin{equation*}
		\begin{aligned}
			0 &\in \frac{1}{p_k}\nabla_x F(x^{k}, y^{k+1}) + \nabla_x \varphi(x^{k}, y^{k+1}) - \nabla_x \varphi(x^{k}, \theta^{k+1/2}) + \frac{x^{k+1} - x^k}{\alpha_k}+\mathcal{N}_{X}(x^{k+1}), \\
			0 &\in \frac{1}{p_k}\nabla_y F(z^{k}) + \nabla_y f(z^{k}) + \partial_y g(x^k,y^{k+1}) - \frac{\tilde{y}^k-\theta^{k}}{\gamma} + \frac{y^{k+1} - \tilde{y}^k}{\beta_k} + \mathcal{N}_{Y}(y^{k+1}).
		\end{aligned}
	\end{equation*}
	From this, we have $	\tilde{\xi}_x^k \in \nabla_x G_{p_k}(z^{k+1}) -u^k + \mathcal{N}_{X}(x^{k+1})$ with
	\begin{equation}
		\begin{aligned}
			\tilde{\xi}_x^k := \,&\frac{1}{p_k}\left( \nabla_x F(z^{k+1}) - \nabla_x F(x^{k}, y^{k+1}) \right) + \nabla_x \varphi(z^{k+1}) - \nabla_x \varphi(x^{k}, y^{k+1})\\
			& - \nabla_{x} v_\gamma(x^k, y^{k+1}) + \nabla_x \varphi(x^{k}, \theta^{k+1/2}) - \left( 1/\alpha_k -  \rho_{v_1} \right)(x^{k+1} - x^k).
		\end{aligned}
	\end{equation}
	By Assumption \ref{asup5}, for any $\xi_g^k \in \partial_y g(x^k,y^{k+1})$, there exists  $\tilde{\xi}_g^k \in \mathbb{R}^m$ satisfying $\|\tilde{\xi}_g^k\| \le L_{g_y} \| x^{k+1} - x^k\|$ such that ${\xi}_g^k + \tilde{\xi}_g^k \in \partial_y g(x^{k+1},y^{k+1}) $. Consequently, we have $\tilde{\xi}_y^k \in \partial_y G_{p_k}(z^{k+1}) + \nabla_{y}H(u^k,y^{k+1}) + \mathcal{N}_{Y}(y^{k+1})$, where 
	\begin{equation}
		\begin{aligned}
			\tilde{\xi}_y^k :=  \,&\frac{1}{p_k}\left( \nabla_y F(z^{k+1}) - \nabla_y F(z^{k}) \right) + \nabla_y f(z^{k+1}) - \nabla_y f(z^{k}) + \tilde{\xi}_g^k \\
			& - \nabla_{y} v_\gamma(x^k, y^{k+1}) + (\tilde{y}^k-\theta^{k})/\gamma - (y^{k+1} - \tilde{y}^k)/\beta_k.
		\end{aligned}
	\end{equation}
	By the $L_{F_x}$-, $L_{f_x}$- and $L_{g_1}$-Lipschitz continuity of $\nabla_x F(x, y^{k+1})$, $\nabla_x f(x, y^{k+1})$ and $\nabla_{x} g(x, y^{k+1})$  with respect to variable $x$ on $X$, along with $\alpha_k \ge \underline{\alpha}$, we have
	\begin{equation}\label{lem5_eq5}
		\begin{aligned}
			\| 	\tilde{\xi}_x^k \| \le \, & \left(  L_{F_x}/p_k+L_{f_x}+L_{g_1} + 1/\underline{\alpha} + \rho_{v_1} \right)\|x^{k+1} - x^k\| \\ &  + \| \nabla_{x} v_\gamma(x^k, y^{k+1}) - \nabla_x \varphi(x^{k}, \theta^{k+1/2}) \| \\
			\le \, & \left(  L_{F_x}/p_k+L_{f_x}+L_{g_1} + 1/\underline{\alpha} + \rho_{v_1} \right)\|x^{k+1} - x^k\| + (L_{f_x} + L_{g_2})C_\eta \zeta_{k} 
		\end{aligned}
	\end{equation}
	where the second inequality follows from \eqref{lem1_eq10}.
	For $\tilde{\xi}_y^k$, as $\nabla_y F(x^{k},y)$ and $\nabla_y f(x^{k},y)$ are $L_{F_y}$- and  $L_{f_y}$-Lipschitz continuous on $X \times Y$, and $\beta_k \ge \underline{\beta}$, we have
	\begin{equation}\label{lem5_eq6}
		\begin{aligned}
			\| 	\tilde{\xi}_y^k\| \le \, & \left(  L_{F_y}/p_k+L_{f_y} + 1/\underline{\beta} \right)\|z^{k+1} - z^k\| + \|\tilde{\xi}^k_g\| + \| \nabla_{y} v_\gamma(x^k, y^{k+1}) - (y^k-\theta_y^{k})/\gamma \| \\
			\le \, & \left(  L_{F_y}/p_k+L_{f_y} + 1/\underline{\beta}\right)\|z^{k+1} - z^k\| +  L_{g_y} \| x^{k+1} - x^k\| \\ 
			&+ \| \nabla_{y} v_\gamma(x^k, y^{k+1}) -\nabla_{y} v_\gamma(x^k, y^{k}) \|+ \| \nabla_{y} v_\gamma(x^k, y^{k}) - (y^k-\theta_y^{k})/\gamma \| \\
			\le \, & \left(  L_{F_y}/p_k+L_{f_y} +  L_{g_y} + L_{\theta^*}/\gamma + 1/\underline{\beta} + 1/\gamma \right)\|z^{k+1} - z^k\|   +  {C_\eta} \zeta_{k}/\gamma,
		\end{aligned}
	\end{equation}
	where the last inequality follows from \eqref{lem1_eq11} and the $L_{\theta^*}$-Lipschitz continuity of $\theta^*_{\gamma}(x,y)$ with respect to $y$ (from Lemma \ref{Lip_theta}). For the last component of $\partial	E_{p_k}(z^{k+1}, u^k)$, note that since $u^k = \nabla_x \tilde{v}_\gamma(x^k, y^{k+1})$, we have $ \nabla_u H(u^k,y^{k+1}) = x^k$. Thus
	\begin{equation}\label{lem5_eq7}
		\|\nabla_u H(u^k,y^{k+1}) - x^{k+1} \| = \| x^k - x^{k+1}\|.
	\end{equation}
	Finally, combining \eqref{lem5_eq5}, and \eqref{lem5_eq6}, and using the fact that $(\tilde{\xi}_x^k, \tilde{\xi}_y^k, x^k - x^{k+1}) \in \partial	E_{p_k}(z^{k+1}, u^k)$, the desired result \eqref{lem5_eq_opt_err} follows.
\end{proof}

To establish sequential convergence under the relaxed conditions \eqref{lem5_eq_dec} and \eqref{lem5_eq_opt_err}, we impose a stronger requirement on the inexact parameter $s_k$.  Specifically, $s_k$ satisfies
\begin{equation}\label{sk_assump}
	\sum_{k = 1}^\infty s_k< \infty \quad \text{and} \quad   \sum_{k = 1}^\infty k s_k^{p_s} < \infty, 
\end{equation} 
for some $p_s < 2$. For example, $s_k$ can be chosen as $s_k = 1/k^{1.1}$.
Under these conditions, we can choose a sufficiently large  $q > 0$ such that $2(1-1/q) > p_s$, ensuring that $\sum_{k = 1}^\infty k s_k^{2(1-1/q)} < \infty$. Furthermore, by applying the ratio test, as in the proof of Lemma \ref{lem1}, we can show that $\sum_{k = 1}^\infty k ((\prod_{i=0}^{k} \tau_i )\mathcal{G}(\theta^{0}, x^{0}, y^{0}))^{2(1-1/q)} < \infty$. Since $\zeta_{k} := \max\{ s_k,  (\prod_{i=0}^{k} \tau_i )\mathcal{G}(\theta^{0}, x^{0}, y^{0})\}$, it follows that $\sum_{k = 1}^\infty k \zeta_k^{2(1-1/q)} < \infty$.

The main difficulty in the convergence analysis arises from the inequality \eqref{lem5_eq_dec}. The presence of the error term $\nu_k$ means this condition does not enforce a sufficient decrease in the merit function $E_{p}$ at each iteration. To address this challenge, we introduce the following auxiliary merit function for $E_{p}(z,u)$,
\[
\tilde{E}_p(z, u, w) = E_{p}(z,u) + w^q, 
\]
where $	E_{p}(z,u) $ is as defined in \eqref{def_merit}.
Since $w^q$ is a KL function, it follows from \cite[Theorem 3.6]{wang2023calculus} (although this result considers the generalized concave KL property, it can be directly extended to the KL property using the same proof) that if $E_{p}(z,u)$ is a KL function, then $\tilde{E}_p(z, u, w)$ is also a KL function. 

We now present the global convergence theorem.  

\begin{theorem}\label{th3}
	Under Assumptions \ref{asup1}, \ref{asup2}, \ref{asup3} and \ref{asup5}, let $\epsilon > 0$. Consider the sequences $\{(x^k, y^k)\}$ and $\{p_k\}$ generated by AGILS (\cref{alg1}) under the same conditions as in Section \ref{sec4.2}, with $s_k$ satisfying condition \eqref{sk_assump}. Suppose the sequence  $\{(x^k,y^k)\}$ is bounded, and for sufficiently large $k$, $p_k = \bar{p}$
	and the feasibility correction procedure (Algorithm \ref{alg2}) is not applied ( i.e., $\tilde{y}^k = y^k$).
	If $E_{\bar{p}}(z,u)$ is a KL function, then the sequence $\left\{(x^k,y^k)\right\}$ converges to a	KKT point of $({\rm VP})_\gamma^{\tilde{\epsilon}}$ for some  $\tilde{\epsilon}\le\epsilon$. 
\end{theorem}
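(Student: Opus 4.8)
The plan is to run an inexact Kurdyka–{\L}ojasiewicz descent argument on the auxiliary merit function $\tilde E_{\bar p}(z,u,w)=E_{\bar p}(z,u)+w^q$ evaluated along $(z^{k},u^{k-1},w_{k})$, where the auxiliary coordinate $w_{k}$ is chosen precisely so that $w_k^q$ absorbs the inexactness error $\nu_{k}$. First I would restrict to the tail index range where $p_{k}\equiv\bar p$ and $\tilde y^{k}=y^{k}$, so Lemma~\ref{lem5} is in force. Boundedness of $\{(x^{k},y^{k})\}$ and continuity of $\nabla_{x}\tilde v_{\gamma}$ (Proposition~\ref{partial_grad_v}) give boundedness of $u^{k}=\nabla_{x}\tilde v_{\gamma}(x^{k},y^{k+1})$, hence of $\{(z^{k},u^{k-1})\}$. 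Since $\{\zeta_{k}\}$ is square summable (Lemma~\ref{lem1}), $\sum_{k}\nu_{k}<\infty$; set $\sigma_{k}:=\sum_{j\ge k}\nu_{j}$ (finite, nonincreasing, $\to 0$) and $w_{k}:=\sigma_{k}^{1/q}\to 0$. Using $\sigma_{k+1}=\sigma_{k}-\nu_{k}$, inequality~\eqref{lem5_eq_dec} rewrites as
\[
\tilde E_{\bar p}(z^{k+1},u^{k},w_{k+1})+a\|z^{k+1}-z^{k}\|^{2}\le \tilde E_{\bar p}(z^{k},u^{k-1},w_{k}),
\]
so $\{\tilde E_{\bar p}(z^{k},u^{k-1},w_{k})\}$ is nonincreasing; it is bounded below because $E_{\bar p}(z^{k},u^{k-1})\ge G_{\bar p}(z^{k})-\tilde v_{\gamma}(x^{k},y^{k})=\tfrac1{\bar p}(F(z^{k})-\underline F)+\varphi(z^{k})-v_{\gamma}(z^{k})\ge 0$, where the first inequality uses $H(u,y)\ge\langle u,x\rangle-\tilde v_{\gamma}(x,y)$. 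Hence $\tilde E_{\bar p}(z^{k},u^{k-1},w_{k})\downarrow E_{*}$ and, since $w_{k}\to0$, also $E_{\bar p}(z^{k},u^{k-1})\to E_{*}$.

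Summing the displayed inequality also yields $\sum_{k}\|z^{k+1}-z^{k}\|^{2}<\infty$, so $\|z^{k+1}-z^{k}\|\to0$; uniform continuity of $\nabla_{x}\tilde v_{\gamma}$ and $\nabla_{y}\tilde v_{\gamma}$ on the relevant compact set gives $\|u^{k}-u^{k-1}\|\to0$, and $w_{k+1}-w_{k}\to0$. By the standard argument the set $\Omega$ of limit points of $\{(z^{k},u^{k-1},w_{k})\}$ is nonempty, compact, connected, with $\tilde E_{\bar p}\equiv E_{*}$ on $\Omega$. If $\tilde E_{\bar p}(z^{k_{0}},u^{k_{0}-1},w_{k_{0}})=E_{*}$ for some $k_{0}$, monotonicity forces $z^{k+1}=z^{k}$ for $k\ge k_{0}$ and the claim is immediate; otherwise assume $\tilde E_{\bar p}(z^{k},u^{k-1},w_{k})>E_{*}$ for all $k$, and apply the uniformized KL property (Lemma~\ref{lem2}) to $\tilde E_{\bar p}$ on $\Omega$, noting $\partial\tilde E_{\bar p}=\big(\partial E_{\bar p}(z,u),\,qw^{q-1}\big)$ so that, by the shifted form of~\eqref{lem5_eq_opt_err},
\[
\mathrm{dist}\!\left(0,\partial\tilde E_{\bar p}(z^{k},u^{k-1},w_{k})\right)\le b\|z^{k}-z^{k-1}\|+\tilde\nu_{k-1}+qw_{k}^{q-1}.
\]

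The crux is the summability of the two error tails. That $\sum_{k}\tilde\nu_{k}<\infty$ follows from $\sum_{k}s_{k}<\infty$ in~\eqref{sk_assump} together with super-summability of the $\tau$-product (ratio test, as in Lemma~\ref{lem1}), hence $\sum_{k}\zeta_{k}<\infty$. For $\sum_{k}w_{k}^{q-1}=\sum_{k}\sigma_{k}^{1-1/q}$, I would use that $t\mapsto t^{1-1/q}$ is concave with value $0$ at $0$, hence subadditive on $[0,\infty)$, giving $\sigma_{k}^{1-1/q}=\big(\sum_{j\ge k}\nu_{j}\big)^{1-1/q}\le\sum_{j\ge k}\nu_{j}^{1-1/q}$; interchanging the order of summation, $\sum_{k}w_{k}^{q-1}\le\sum_{j}j\,\nu_{j}^{1-1/q}$, a constant multiple of $\sum_{j}j\,\zeta_{j}^{2(1-1/q)}$, which is finite by the choice of $q$ recorded just before the theorem. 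This step is the main obstacle, and it is exactly why the stronger inexactness condition~\eqref{sk_assump} and the auxiliary variable $w$ were introduced.

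Finally I would close with the usual telescoping. Concavity of $\phi$ and the two displayed inequalities give
\[
\phi\!\big(\tilde E_{\bar p}(z^{k},u^{k-1},w_{k})-E_{*}\big)-\phi\!\big(\tilde E_{\bar p}(z^{k+1},u^{k},w_{k+1})-E_{*}\big)\ \ge\ \frac{a\|z^{k+1}-z^{k}\|^{2}}{b\|z^{k}-z^{k-1}\|+\tilde\nu_{k-1}+qw_{k}^{q-1}},
\]
and $2\sqrt{xy}\le x+y$ turns this into $\|z^{k+1}-z^{k}\|\le\tfrac12\|z^{k}-z^{k-1}\|+C\big(\Delta_{k}+\tilde\nu_{k-1}+w_{k}^{q-1}\big)$, where $\Delta_{k}$ denotes the left-hand $\phi$-difference. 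Summing over $k$ (from the index beyond which the KL estimate holds), the $\Delta_{k}$ telescope and the error sums converge by the previous paragraph, so after absorbing the $\tfrac12\|z^{k}-z^{k-1}\|$ term one obtains $\sum_{k}\|z^{k+1}-z^{k}\|<\infty$; thus $\{(x^{k},y^{k})\}$ is Cauchy and converges to some $(\bar x,\bar y)$. Since $\{p_{k}\}$ is bounded, Theorem~\ref{th1} then identifies $(\bar x,\bar y)$ as a KKT point of $({\rm VP})_{\gamma}^{\tilde\epsilon}$ with $\tilde\epsilon:=\varphi(\bar x,\bar y)-v_{\gamma}(\bar x,\bar y)\le\epsilon$, which is the assertion.
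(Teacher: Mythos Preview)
Your proposal is correct and follows essentially the same route as the paper: you introduce the same auxiliary merit function $\tilde E_{\bar p}(z,u,w)=E_{\bar p}(z,u)+w^{q}$ with $w_{k}=(\sum_{j\ge k}\nu_{j})^{1/q}$, derive the exact sufficient-decrease and relative-error estimates from Lemma~\ref{lem5}, apply the uniformized KL inequality on the limit set, and use the subadditivity of $t\mapsto t^{1-1/q}$ together with condition~\eqref{sk_assump} to show $\sum_{k}w_{k}^{q-1}<\infty$ before telescoping and invoking Theorem~\ref{th1}. The only cosmetic differences are that you spell out the lower bound $E_{\bar p}(z^{k},u^{k-1})\ge 0$ via the Fenchel inequality and explicitly note $\|u^{k}-u^{k-1}\|\to 0$, both of which the paper uses implicitly.
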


\begin{proof}
	By assumption, the penalty parameter \(p_k\) is eventually constant and the feasibility correction procedure is eventually inactive. Therefore, there exists \( k_0 \in \mathbb{N} \) such that for all \( k \ge k_0 \), we have \(p_k = \bar{p} := p_{k_0}\), and \(\tilde{y}^k = y^k\). From Lemma \ref{lem3}, we know that $\lim_{k \rightarrow \infty} \|z^{k+1} - z^k\|^2 = 0$.
	Let $\Omega$ denote the set of all limit points of the sequence $\{(z^k, u^{k-1})\}$. 
	From Proposition \ref{partial_grad_v}, we know that $\nabla v_\gamma$ is continuous. Given that the sequence $\{z^k\}$ is bounded, and $u^{k} = \nabla_x v_\gamma (x^k, y^{k+1}) + \rho_{v_1}x^k$, we conclude that the sequence $\{u^k\}$ is also bounded. Consequently, $\Omega$ is a bounded set.
	Since limit points form a closed set, it follows that $\Omega$ is compact and $\lim_{k \rightarrow \infty} \mathrm{dist}((z^k, u^{k-1}), \Omega) = 0$. 
	Now, define $w^k = (\sum_{i = k}^{\infty}\nu_i)^{1/q} $, since $\sum_{k}\nu_k < \infty$, we have $w^k \rightarrow 0$. We can also have $\tilde{E}_{\bar{p}}(z^k, u^{k-1}, w^k) = E_{\bar{p}}(z^k,u^{k-1}) + \sum_{i = k}^{\infty}\nu_i$. By Lemma \ref{lem5}, there exist constants $a, b > 0$ such that
	\begin{gather}
		\tilde{E}_{\bar{p}}(z^{k+1}, u^{k}, w^{k+1}) +  a\left\|z^{k+1}-z^k\right\|^2 \le \tilde{E}_{\bar{p}}(z^k, u^{k-1}, w^k), \quad \text{and} \label{thm3_eq_dec} \\
		{\rm dist}\left(0, \partial \tilde{E}_{\bar{p}}\left( z^{k+1}, u^k, w^{k+1} \right) \right) \le b\|z^{k+1} - z^k\| + \tilde{\nu}_{k} + q|w^{k+1}|^{q-1}. \label{thm3_eq_opt_err}
	\end{gather}
	Since $\tilde{E}_{\bar{p}}(z,u,w)$ is bounded below and continuous on $X \times Y \times \Re_+$, $\lim_{k \rightarrow \infty} \|z^{k+1} - z^k\| = 0$,  $\tilde{E}_{\bar{p}}(z^k,u^{k-1},w^k)$ is nonincreasing as $k$ increases, therefore, there exists a constant $\bar{E}$ such that for any subsequence $\{(z^{k_j}, u^{k_j-1}, w^{k_j})\}$ of sequence $\{(z^k,u^{k-1},w^k)\}$, we have $
	\bar{E} = \lim_{k_j \rightarrow \infty} \tilde{E}_{\bar{p}}(z^{k_j},u^{k_j-1},$\\
	$  w^{k_j}) = \lim_{k \rightarrow \infty} \tilde{E}_{\bar{p}}(z^k,u^{k-1},w^k)$.
	This implies that the function $\tilde{E}_{\bar{p}}(z,u,w)$ is constant on $\Omega \times \{0\}$.
	We can assume that $ \tilde{E}_{\bar{p}}(z^k,u^{k-1},w^k) > \bar{E}$ for all $k$. If there exists some $k > 0$ such that $\tilde{E}_{\bar{p}}(z^k,u^{k-1},w^k) = \bar{E}$,  then from \eqref{thm3_eq_dec}, we would have $\|z^{k+1} - z^k\| = 0$ for sufficiently large $k$, implying that the sequence $\{z^k\}$ converges and the conclusion follows immediately. 
	
	Since $\lim_{k \rightarrow \infty} \mathrm{dist}((z^k,u^{k-1},w^{k}), \Omega \times \{0\}) = 0$ and $\lim_{k \rightarrow \infty} \tilde{E}_{\bar{p}}(z^k,u^{k-1},w^{k}) = \bar{E}$, for any $\epsilon, \eta > 0$, there exists $k_0$ such that $\mathrm{dist}((z^k,u^{k-1},w^{k}), \Omega \times \{0\}) < \epsilon$ and $\bar{E} < \tilde{E}_{\bar{p}}(z^k,u^{k-1},w^{k})  < \bar{E} + \eta$ for all $k \ge k_0$.	
	Since $\tilde{E}_{\bar{p}}(z,u,w)$ satisfies the Kurdyka-\L{}ojasiewicz property at each point in $\Omega \times \{0\} $, and $\tilde{E}_{\bar{p}}(z,u,w)$ is equal to a finite constant on $\Omega \times \{0\}$, we can apply Lemma \ref{lem2} to obtain a continuous concave function $\phi$ such that for all $k \ge k_0$,
	\[
	\phi' \left(\tilde{E}_{\bar{p}}(z^k,u^{k-1},w^{k}) - \bar{E} \right) \mathrm{dist} \left( 0, \partial \tilde{E}_{\bar{p}}(z^k,u^{k-1},w^{k}) \right) 	\ge 1.
	\]
	For notational simplicity, we define $\tilde{E}_k := \tilde{E}_{\bar{p}}(z^k,u^{k-1},w^{k}) - \bar{E}$.
	By combining the above inequality with \eqref{thm3_eq_opt_err}, we obtain the following inequality,
	\[
	\phi' \left ( \tilde{E}_k \right ) 
	\cdot\,  \left(b\|z^{k} - z^{k-1}\| + \tilde{\nu}_{k-1} + q|w^{k}|^{q-1} \right)\ge 1.
	\]
	Next, using the concavity of $\phi$ and \eqref{thm3_eq_dec}, we obtain 
	\[
	\phi'(  \tilde{E}_k ) \cdot a \|z^{k+1} - z^k\|^2 \le \phi'( \tilde{E}_k ) \cdot \left( \tilde{E}_k - \tilde{E}_{k+1} \right) \le \phi\left(\tilde{E}_k  \right) -  \phi\left( \tilde{E}_{k+1}  \right).
	\]
	Combining the above two inequalities, we get
	\[
	\begin{aligned}
		&  \left(b\|z^{k} - z^{k-1}\| + \tilde{\nu}_{k-1} + q|w^{k}|^{q-1} \right) \left[ \phi\left(\tilde{E}_k  \right) -  \phi\left( \tilde{E}_{k+1}  \right) \right] \\
		\ge \, & \left(b\|z^{k} - z^{k-1}\| + \tilde{\nu}_{k-1} + q|w^{k}|^{q-1} \right) \cdot\phi'( \tilde{E}_k  ) \cdot a \|z^{k+1} - z^k\|^2 \ge a\|z^{k+1} - z^k\|^2.
	\end{aligned}
	\]
	Multiplying both sides of this inequality by $4/a$ and then taking the square root, we apply the inequality $2cd \le c^2 + d^2$ to obtain
	\[
	2\|z^{k+1} - z^k\| \le \|z^{k} - z^{k-1}\| +  \tilde{\nu}_{k-1}/b + q|w^{k}|^{q-1}/b+b/a \cdot \left[ \phi\left(\tilde{E}_k  \right) -  \phi\left( \tilde{E}_{k+1}  \right) \right].
	\]
	Summing up the above inequality for $k = k_0, \ldots, K$, since $\phi \ge 0$, we have
	\[
	\sum_{k = k_0}^K \|z^{k+1} - z^k\| \le  \|z^{k_0} - z^{k_0-1}\| + \frac{b}{a} \phi\left(  \tilde{E}_{k_0}\right)  + \sum_{k = k_0}^K \frac{\tilde{\nu}_{k-1}}{b} + \sum_{k = k_0}^K \frac{q|w^{k}|^{q-1}}{b}.
	\]
	Taking $K \rightarrow \infty$ in the above inequality, and using the fact that $ \sum_{k = 1}^\infty \tilde{\nu}_{k-1} < \infty$ and $ \sum_{k = 1}^\infty |w^{k}|^{q-1} = \sum_{k = 1}^\infty |(\sum_{i = k}^{\infty}\nu_i)^{1/q}|^{q-1} \le \sum_{k = 1}^\infty \sum_{i = k}^{\infty} \nu_i^{1-1/q} = \sum_{k = 1}^\infty k \nu_k^{1-1/q} \le C_\nu(\sum_{k = 1}^\infty k \zeta_k^{2(1-1/q)}) < \infty$ for some $C_\nu>0$, we obtain $\sum_{k = 1}^\infty \|z^{k+1} - z^k\| < \infty$.
	Thus, the sequence $\{z^k\}$ is a Cauchy sequence. Hence the sequence $\{z^k\}$ converges and we get the conclusion from Theorem \ref{th1}.
\end{proof}

	\begin{remark}
	To further substantiate the applicability of our sequential convergence theory in applications, we now discuss the KL property imposed on the merit function $E_{\bar{p}}(z,u)$ in Theorem \ref{th3}.
	A wide range of functions automatically satisfy the KL property, see, e.g., \cite{ attouch2010proximal, attouch2013convergence,bolte2010characterizations, bolte2014proximal}. Notably, as shown in \cite{bolte2007lojasiewicz, bolte2007clarke}, semi-algebraic function is an important subclass of KL functions.
	More importantly, the class of semi-algebraic functions is closed under various operations, see, e.g., \cite{attouch2010proximal,attouch2013convergence,bolte2014proximal}. Specifically, the indicator functions of semi-algebraic sets, finite sum and product of semi-algebraic functions, composition of semi-algebraic functions and partial minimization of semi-algebraic functions over semi-algebraic sets	are all semi-algebraic functions. 
	
	Based on these properties, if the objective functions $f$ and $g$ and the constraint set $Y$ in the lower-level problem are semi-algebraic, the Moreau envelope function $v_\gamma$ is also semi-algebraic. Additionally, if $F$ and the constraint set $X$ in the upper-level are semi-algebraic, the merit function $E_{\bar{p}}(z,u)$ defined in \eqref{def_merit}  is semi-algebraic and thus satisfies the KL property. Therefore, when $F$, $f$, $g$, $X$ and $Y$ in \eqref{eq1} are all semi-algebraic, the KL property of $E_{\bar{p}}(z,u)$ holds, ensuring the sequential convergence described in Theorem \ref{th3}.
\end{remark}

\section{Numerical experiments}
\label{sec:experiments}

In this section, we evaluate the performance of our proposed algorithm, AGILS, by comparing it with several existing methods on two problems: a toy example and the sparse group Lasso bilevel hyperparameter selection problem. AGILS utilizes an inexactness criterion that can be either absolute or relative. To explore this further, we also examine two variants: AGILS\_A, which exclusively employs an absolute inexactness criterion for approximating $\theta$, and AGILS\_R, which uses only a relative inexactness criterion for the same purpose. Both problems are specific instances of bilevel problem \eqref{eq1} and satisfy Assumptions \ref{asup1}, \ref{asup2}, \ref{asup3}, and \ref{asup5}.
The baseline methods for comparison include:
grid search, random search, TPE \cite{bergstra2013making}\footnote{\url{https://github.com/hyperopt/hyperopt}} (a Bayesian optimization method), IGJO \cite{feng2018gradient}\footnote{\url{https://github.com/jjfeng/nonsmooth-joint-opt}} (an implicit differentiation method), MEHA \cite{liumoreau} (a single-loop, gradient-based method), VF-iDCA \cite{gao2022value}\footnote{\url{https://github.com/SUSTech-Optimization/VF-iDCA}} (a method based on a difference-of-convex algorithm), 
and the MPCC approach (which reformulates the bilevel problem as a mathematical program with complementarity constraints).
Methods that require smoothness assumptions, such as those in \cite{hong2023two,lu2024first}, were excluded because the tested problems do not satisfy the required smoothness conditions.
For grid search, random search, and TPE, each subproblem was solved using the CVXPY package with the CLARABEL and SCS solvers. All experiments were implemented in Python and conducted on a laptop with an Intel  i7-1260P CPU (2.10 GHz) and 32 GB RAM.

\subsection{Toy example}
We consider the following toy example:
\begin{equation}\label{toy}
	\begin{aligned}
		\min\limits_{x\in \Re^n, y\in\Re^n,0\le x \le 1}  \sum\limits_{i=1}^n y_i  \quad	{\rm s.t. } ~~ y\in \underset{y \in \Re^n}{\mathrm{argmin}}\ \sum\limits_{i=1}^n \sqrt{(y_i - a_i)^2 + 1/{n^2}} + \sum\limits_{i=1}^n x_i|y_i|, 	
	\end{aligned}
\end{equation}
where $a \in \Re^n$ satisfies $a_i = -2 / n^{2/3}$ for $i \in 1,\dots, n/2$, and $a_i = 2 / n^{2/3}$ for $i \in n / 2 + 1, \ldots, n$. The optimal solution is $x^*_i = 0$, $y^*_i = a_i$ for $i = 1,\dots, n / 2$ and $x^*_i \in  [a_i / \sqrt{a_i^2 + 1/ n^2}, 1]$, $y^*_i = 0$ for $i=n / 2 + 1,\dots,n$. 

We compare the performance of AGILS and its two variants with several methods, including grid search, random search, TPE, IGJO, VF-iDCA, MEHA, and the MPCC approach. The performance of each method is evaluated using metric Error $:= {\rm dist}(z^{k},S^*) / \sqrt{1 + \min_{z\in S^*}\lVert z^*  \rVert^2}$, where $S^*$ is the optimal set of \eqref{toy}, and $z :=(x, y)$. For grid search, we set \( x_1 = x_2 = \dots = x_n = \rho \), and perform the search over $\rho $ on a grid of 100 uniformly spaced values within the interval \([0, 1]\). Random search is implemented by uniformly  sampling 100 points  from \([0, 1]^n\). For TPE, the search space is modeled as a uniform distribution over \([0, 1]^n\). In the MPCC approach, we first reformulate the toy example by introducing non-negative vectors \( u \) and \( v \) to represent \( y = u - v \), and transform \( \|y\|_1 \) into \( \sum_{i=1}^n (u_i + v_i) \). The resulting MPCC is solved using IPOPT \cite{wachter2006implementation}, with a relaxation parameter and tolerance of 0.1. VF-iDCA, MEHA, and IGJO use the same initial iterates as AGILS. For VF-iDCA, we set \( \beta_0 = 0.5 \), \( \rho = 0.5 \), \( c = 5 \), and \( \delta = 5 \). Since VF-iDCA and IGJO may terminate without converging to a true solution, we impose additional stopping rules. VF-iDCA stops when \( \lVert z^{k+1} - z^k \rVert / \sqrt{1 + \lVert z^k \rVert^2} < 10^{-4} \) or \( \mathrm{Error} < 1/n \), while IGJO stops when \( \lVert x^{k+1} - x^k \rVert / \sqrt{1 + \lVert x^k \rVert^2} < 10^{-4} \) or until \( \mathrm{Error} < 1/n \). For MEHA, and the parameters are searched over \( \tilde{c}_0 \in \{0.1, 0.5, 1, 5\} \), \( p \in \{0.1, 0.2, 0.3, 0.4\} \) and \( \tilde{\alpha} \), \( \tilde{\beta} \), \( \tilde{\eta} \), and \( \tilde{\gamma} \) scaled proportionally from the AGILS parameters \( {\alpha} \), \( {\beta} \), \( {\eta} \), and \( {\gamma} \) using scales  \(\{1, 1/2, 1/3, 1/4\}\). The optimal parameters that minimize the Error are selected as \( \tilde{c}_0 = 1 \), \( p = 0.2 \), \( \tilde{\gamma} = \gamma/3\), \( \tilde{\alpha} = \alpha/4 \), \( \tilde{\beta} = \beta \), and \( \tilde{\eta} = \eta \). MEHA terminates when $\mathrm{Error} < 1/n$. For AGILS and its variants, the initial iterates are $x_{0}=[0,\dots,0]$, $y_{0} =\theta_0 = a$, the parameters are set as $\epsilon = 10^{-6}$, $c_{\tilde{y}}=50\sqrt{n}$ and $p_0 = 0.5,\ \varrho_p = 0.02,\ c_p = 1$. The step sizes $\alpha, \beta, \gamma$ are chosen as $\alpha_k = 1/(L_{\psi_{x,k}} + c_\alpha)$, $\beta_k = 1/(L_{\psi_{y,k}} + c_\beta)$, and $\gamma = 1/(\rho_{f_2} +\rho_{g_2})$ where $L_{\psi_{x,k}} = L_{F_x}/p_k+L_{f_x}+L_{g_1}+\rho_{f_1} + \rho_{g_1}$, $L_{\psi_{y,k}} = L_{F_y}/p_k+L_{f_y}$  with the following values for the constants $L_{F_x} = L_{F_y} = L_{f_x} = \rho_{f_1} = \rho_{f_2} =L_{g_1}=0$, $\rho_{g_1} = \rho_{g_2} = 1$, $L_{f_y} = n$, $c_{\alpha} = c_{\beta} = 0.1$ and  $\eta = 1 / ( L_{f_y} + 1 / \gamma)$. AGILS and its two variants terminate when $\text{Error}<1/n$. The approximations $\theta$ are computed using the proximal gradient method to satisfy inexact criteria  \eqref{inexacty1} and \eqref{inexacty2}. The inexactness parameters: \( s_k = 0.05/(k+1)^{1.05} \), \( \tau_k = 20/(k+1)^{0.7} \). The $\tilde{y}$ and $\hat{\theta}$ in the feasibility correction procedure are also computed using proximal gradient method. 

Numerical results for two different problem dimensions, $n = 200$ and $n = 600$, are reported in Table \ref{tab-num-1}. The results demonstrate that AGILS and its variants perform well in both computational efficiency and solution accuracy, consistently achieving the lowest Error with the shortest computational time. MEHA performs second, but it requires tuning of parameters from a set of candidate values, which can be a limitation. This limitation stems from MEHA's stricter and implicit step size rules.

\begin{table}[ht]
\caption{Comparisons of different methods on the toy example for dimensions $n = 200$ and $n = 600$. AGILS(Orig) refers to the original AGILS method. Its variants, AGILS\_A and AGILS\_R, are denoted as AGILS(Abs) and AGILS(Rel), respectively.}
\label{tab-num-1}
\centering
{\small 
	\setlength{\tabcolsep}{1.7pt}  
	\renewcommand{\arraystretch}{1.0} 
	\begin{tabular}{c@{\hskip 6pt}ccccccc>{\centering\arraybackslash}m{0.8cm}>{\centering\arraybackslash}m{0.8cm}>{\centering\arraybackslash}m{0.8cm}}
		\hline
		\multicolumn{11}{c}{{\normalsize \rule{0pt}{8pt}Dimension $n = 200$}} \\
		\hline
		\multirow{2}{*}{\textbf{Method}} & \multirow{2}{*}{\textbf{Grid}} & \multirow{2}{*}{\textbf{Random}} & \multirow{2}{*}{\textbf{TPE}} & \multirow{2}{*}{\textbf{MPCC}} & \multirow{2}{*}{\textbf{IGJO}} & \multirow{2}{*}{\textbf{VF-iDCA}} & \multirow{2}{*}{\textbf{MEHA}} & \multicolumn{3}{c}{\textbf{AGILS}} \\
		\cline{9-11}
		& & & & & & & & \textbf{Orig}  & \textbf{Abs} & \textbf{Rel} \\
		\hline
		Time(s) & 0.59 & 5.89 & 29.61 & 20.99 & 6.06 & 0.52 & 0.09 & 0.07 &  0.08 & 0.08  \\
		Error & 0.70 & 0.75 & 0.72 & 0.02 & 0.82 & 0.21 & 0.00 &  0.00 & 0.00 & 0.00\\
		\hline
		\multicolumn{11}{c}{{\normalsize \rule{0pt}{8pt}Dimension $n = 600$}} \\
		\hline
		\multirow{2}{*}{\textbf{Method}} & \multirow{2}{*}{\textbf{Grid}} & \multirow{2}{*}{\textbf{Random}} & \multirow{2}{*}{\textbf{TPE}} & \multirow{2}{*}{\textbf{MPCC}} & \multirow{2}{*}{\textbf{IGJO}} & \multirow{2}{*}{\textbf{VF-iDCA}} & \multirow{2}{*}{\textbf{MEHA}} & \multicolumn{3}{c}{\textbf{AGILS}} \\
		\cline{9-11}
		& & & & & & & & \textbf{Orig} & \textbf{Abs} & \textbf{Rel} \\
		\hline
		Time(s) & 2.55 & 27.97 & 107.37 & 179.12  & 55.19 & 2.27 & 0.48 & 0.14  & 0.15 & 0.39\\
		Error & 0.71 & 0.79 & 0.76 & 1.00 & 0.69 & 0.10 & 0.00 & 0.00 & 0.00 & 0.00\\	
		\hline
	\end{tabular}
}
\end{table}

We report the average inner-to-outer iteration ratio, defined as \(\sum_{k=1}^K \ell_k / K\), where \(\ell_k\) is the number of inner iterations at the \(k\)-th outer step. Table~\ref{tab-num-5} shows the overall average and the averages over the first 200 and remaining outer iterations. Inner iterations tend to be more variable in the early phase and more stable later.

\begin{table}[ht]
\caption{Average inner-to-outer iteration ratio of AGILS and its variants for different problem dimensions.}
\label{tab-num-5}
\centering
{\small
	\setlength{\tabcolsep}{8pt}
	\renewcommand{\arraystretch}{1}
	\begin{tabular}{ccccccc}
		\hline
		\multirow{2}{*}{\textbf{Method}} & \multicolumn{3}{c}{Dimension \( n = 200 \)} & \multicolumn{3}{c}{Dimension \( n = 600 \)} \\
		\cline{2-4} \cline{5-7}
		& Overall & First 200 & After 200  & Overall & First 200  & After 200 \\
		\hline
		\textbf{AGILS} & 2.84 & 3.31 & 2.46 &  2.87 & 5.29 & 2.03\\
		\textbf{AGILS\_A} & 3.38 & 4.45 & 2.46 &  2.88 & 5.29 & 2.03\\
		\textbf{AGILS\_R} & 6.28 & 7.48 & 4.20 & 43.39 & 64.59 & 3.00\\
		\hline
	\end{tabular}
}
\end{table}

Next, we evaluate the effectiveness of the proposed inexact criterion \eqref{inexacty1} and \eqref{inexacty2} by comparing AGILS with two extreme variants that use different inexact criteria on the toy example with $n = 200$. One variant, denoted as AGILS-E, solves the proximal lower-level problem almost exactly, using a very small tolerance $10^{-6}$ in \eqref{inexacty1} and \eqref{inexacty2}, while the other, denoted as AGILS-S, solves the proximal lower-level problem using only a single proximal gradient step. The parameters and tolerances used are consistent with those described above.
The results in Figure \ref{fig-1} show that AGILS-E achieves the fastest error reduction in terms of iterations, but is slow in runtime due to the high computational cost of solving the proximal lower-level problem exactly. AGILS-S fails to converge due to its excessive inexactness in solving the proximal lower-level problem. AGILS achieves the fastest error reduction in terms of runtime.

\begin{figure}[!htb]
\centering
\begin{minipage}[t]{.42\linewidth}
	\includegraphics[width=1\textwidth]{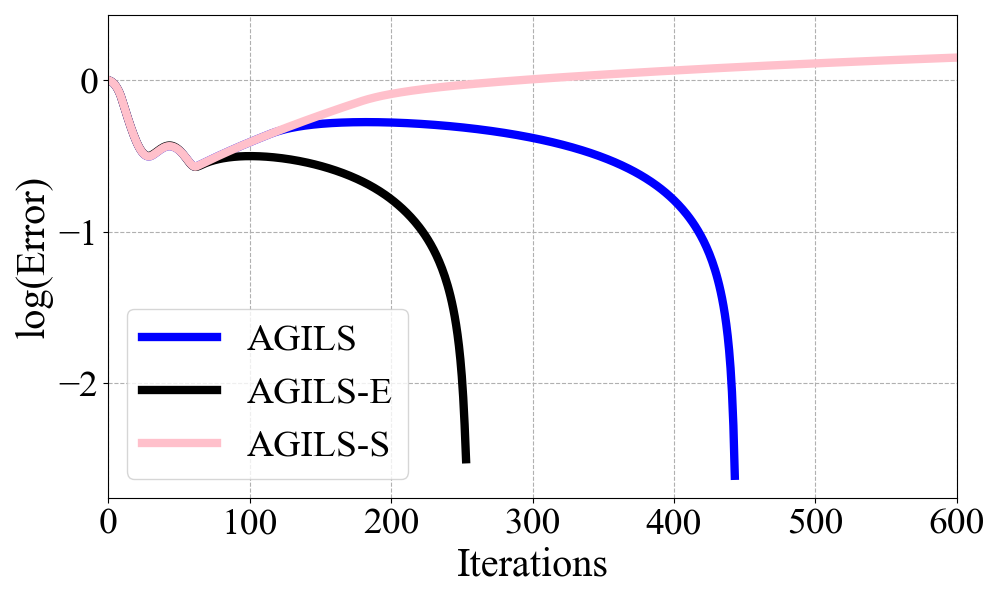}
\end{minipage}
\hspace{10pt}
\begin{minipage}[t]{.42\linewidth}
	\includegraphics[width=1\textwidth]{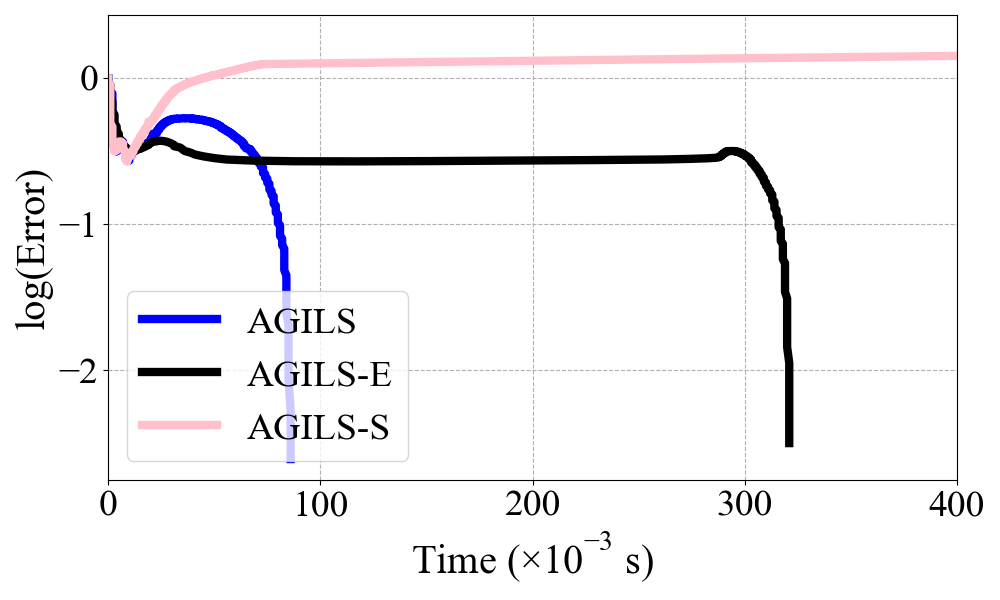}
\end{minipage}
\caption{Effectiveness of the inexact criterion in AGILS: comparison with two extreme variants}\label{fig-1}
\end{figure}

Finally, we evaluate the scalability of AGILS by testing it on toy examples with varying problem dimensions. The parameters of AGILS are as specified earlier, except that \( p_0 \) is set as 1. AGILS terminates when $\text{Error} < 1/n$. The results in Figure \ref{fig-2} show that computational time increases steadily as problem dimension grows. This demonstrates that AGILS remains both efficient and stable as the problem dimension increases, highlighting its ability to handle large-scale problems effectively. 

Furthermore, in all repeated runs of the toy example, the feasibility correction procedure, designed to enforce constraint satisfaction, was never triggered. 

\begin{figure}[!htb]
\centering
\begin{minipage}[t]{.42\linewidth}
\includegraphics[width=1\textwidth]{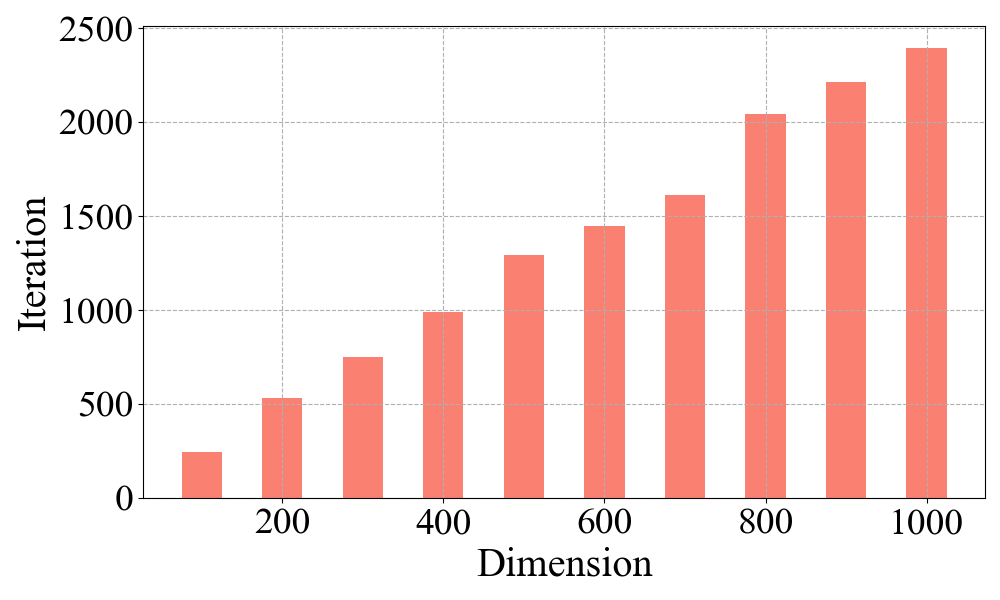}
\end{minipage}
\hspace{10pt}
\begin{minipage}[t]{.42\linewidth}
\includegraphics[width=1\textwidth]{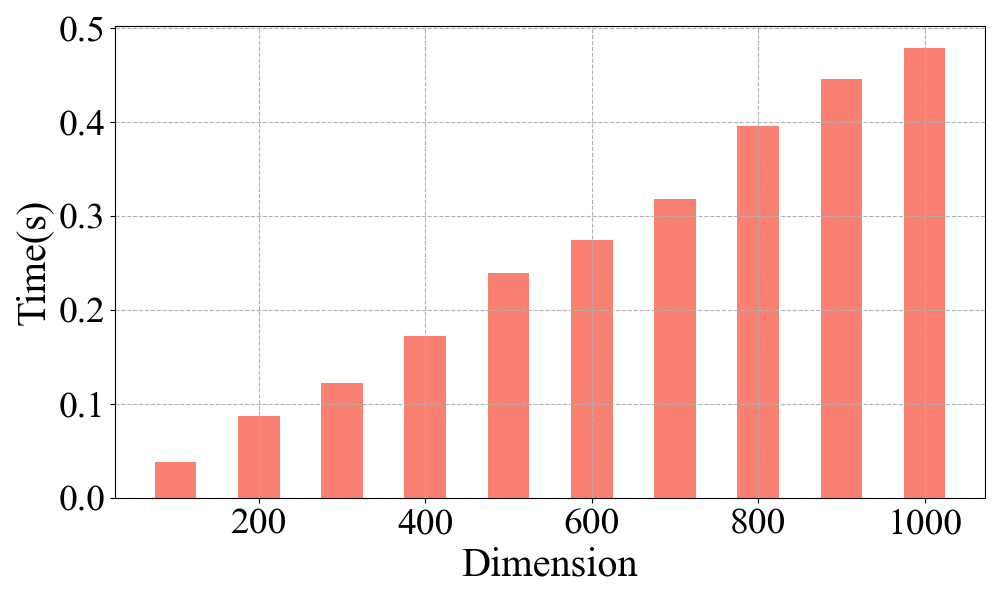}
\end{minipage}
\caption{Iteration and computational time of AGILS on the toy example with varying dimensions}\label{fig-2}
\end{figure}

\subsection{Sparse group Lasso}
The sparse group Lasso \cite{simon2013sparse} is a regularized regression model designed to achieve both individual feature sparsity and group-level sparsity. Given \( m \) features divided into \( J \) groups, let \( y^{(j)} \) and \( x_j \) denote the coefficients and regularization parameter of the \( j \)-th group. The corresponding bilevel hyperparameter selection problem is formulated as follows:     
\begin{equation*}
\begin{aligned}
\min_{ x \in {\Re}^{J+1}_{+}, y \in {\Re}^{m}} \, & \frac{1}{2|I_{\text{val}}|} \sum_{i \in I_{\text{val}}} \lvert b_{i} - y^{T} a_{i} \rvert ^{2} \\
{\rm s.t.}\quad\ &y \in \underset{\hat{y} \in {\Re}^{m}}{\mathrm{argmin}} \,  \frac{1}{2 |I_{\text{tr}}|} \sum_{i \in I_{\text{tr}}} \lvert b_{i} - \hat{y}^{T} a_{i} \rvert ^{2}  
+ \sum_{j=1}^{J}x_{j} \lVert \hat{y}^{\left(j\right)} \rVert_{2} + x_{J+1}  \lVert \hat{y} \rVert_{1},
\end{aligned} 
\end{equation*}
where $I_{\text{val}}$ and $I_{\text{tr}}$ are the sets of validation and training sample indices, respectively.
In this part, we compare the performance of our AGILS and its variants with several hyperparameter optimization methods, including grid search, random search, TPE, IGJO, VF-iDCA, and MEHA. The MPCC approach is excluded from the comparison due to its prohibitively high computational runtime. Following \cite{feng2018gradient}, the data generation process is as follows:
\( a_i \in \mathbb{R}^m \) is sampled from \( N(0, I) \), and the response \( b_i \) is computed as \( b_i = y^T a_i + \sigma \epsilon_i \), where noise \( \epsilon \) is sampled from \( N(0, I) \). The coefficient vector \( y \) is divided equally into five groups, with the $i$th group having the first $2i$ features set to value $2i$ and the remaining features set to zero, for $i = 1, \ldots, 5$. The noise level
\( \sigma \) is chosen to ensure \( \text{SNR} = \|Ay\|/\|b - Ay\| = 3 \).  The dataset is then randomly split into training, validation, and test sets of sizes \( n_{\text{tr}}  \), \( n_{\text{val}}  \), and \( n_{\text{test}}\), respectively. 

For grid search, random search and TPE, the search space is parameterized by \( \rho_i = \log_{10}(x_i) \) over the range \([-9, 2]^{J+1}\). Grid Search uses a \( 20 \times 20 \) grid with the constraint $\rho_1 = \rho_2 = \dots = \rho_M$; Random Search samples 400 points uniformly; and TPE models the search space as a uniform distribution.
All iterative methods (VF-iDCA, MEHA, IGJO, AGILS and its variants) share the initial point \(x_0 = [1, \dots, 1]\). For MEHA and the AGILS family, we also set \(y_{0} = \theta_0 = [1,\dots,1]\). VF-iDCA is configured with parameters \( \beta_0 = 5 \), \( \rho = 0.01 \), \( c = 1 \), and \( \delta = 0.1 \), and it terminates when \( \max\{ \lVert {z}^{k+1} - {z}^{k}\rVert/\sqrt{1 + \lVert {z}^k \rVert^2}, \tilde{t}^{k+1}/m \} < 0.1 \), where \( \tilde{t}^{k+1} \) is defined in \cite{gao2022value}. The IGJO algorithm is run with a maximum of 50 iterations and a minimum step size of  \( 10^{-6} \).
For MEHA, the parameters are searched over \( \tilde{c}_0 \in \{1, 10, 20\} \) and \( p \in \{0.1, 0.2, 0.3, 0.4\} \), while \( \tilde{\alpha} \), \( \tilde{\beta} \), \( \tilde{\eta} \), and \( \tilde{\gamma} \) scaled proportionally from the AGILS parameters \( {\alpha} \), \( {\beta} \), \( {\eta} \), and \( {\gamma} \) using scales \(\{1, 1/2, 1/4, 1/8\}\). 
The optimal parameters that minimize the validation error are selected as \( \tilde{c}_0 = 20 \), \( p = 0.1 \), \( \tilde{\gamma} = \gamma \), \( \tilde{\alpha} = \alpha \), \( \tilde{\beta} = \beta/8 \), and \( \tilde{\eta} = \eta/4 \). MEHA terminates when 
$\lVert \mathbf{z}^{k+1} - \mathbf{z}^{k}\rVert/\sqrt{1 + \lVert \mathbf{z}^k \rVert^2} < 0.005/m$.
AGILS and its variants use the same parameter settings as in the toy example, except for the penalty parameters and the following constants such as \( L_{F_x} = L_{f_x} = \rho_{f_1} = \rho_{f_2} = L_{g_1} = 0 \), \( L_{F_y} = \lambda_{\text{max}}(A_{\text{val}}^\top A_{\text{val}}) / n_{\text{val}} \), \( L_{f_y} = \lambda_{\text{max}}(A_{\text{tr}}^\top A_{\text{tr}}) / n_{\text{tr}} \), \( \rho_{g_1} = 1 \), \( \rho_{g_2} = m \). The penalty parameters are set as \( p_0 = 6 \), \( \varrho_p = 0.01 \). AGILS and its two variants terminate when $\lVert z^{k+1} - z^{k} \rVert/\sqrt{1 + \lVert z^{k}  \rVert^2} < 0.005/m$,  $z^k := (x^k,y^k)$ and $t^{k+1} < 0.1$. As in the toy example, $\theta$, $\tilde{y}$ and $\hat{\theta}$ are computed by the proximal gradient method so that the inexactness criteria \eqref{inexacty1}, \eqref{inexacty2}, \eqref{inexacty_y}, and \eqref{inexacty3} are satisfied. The inexactness parameters: \( s_k = 5/(k+1)^{1.05} \) and \( \tau_k = 10/(k+1)^{0.2} \).

The experiments are repeated 20 times with \( n_{\text{tr}} = 200 \), \( n_{\text{val}} = 200 \), \( n_{\text{test}} = 200 \), and \( m = 300 \). Results are reported in Table \ref{tab-num-2} correspond to the mean values computed over 20 independent runs, with standard deviations shown in parentheses. We report the validation error (``Val. Err.") and test error (``Test. Err."), computed as
$\sum_{i \in I_{\text{val}}} \lvert b_{i} - y^{T} a_{i} \rvert ^{2}/|I_{\text{val}}| $ and $\sum_{i \in I_{\text{test}}} \lvert b_{i} - y^{T} a_{i} \rvert ^{2}/|I_{\text{test}}| $, respectively. Both errors are evaluated using the sparse group Lasso estimator, which is obtained by solving the lower-level problem with the hyperparameter value $x^k$ produced by the algorithms. Additionally, we report a second type of test error, referred to as the infeasible test error (``Test. Err. Infeas.”) $ \sum_{i \in I_{\text{test}}} \left|b_i - (y^k)^\top a_i \right|^2/|I_{\text{test}}|$, where the lower-level variable values are taken directly from the iterates generated by  VF-iDCA, MEHA, AGILS and its variants. ``Feasibility"  is a scaled measure of the value function constraint violation, given by $(\varphi(x^k,y^k)-v(x^k))/|I_{\text{val}}|$ for VF-iDCA, and $(\varphi(x^k,y^k)-v_{\gamma}(x^k,y^k))/|I_{\text{val}}|$ for  MEHA, AGILS and its two variants. Note that this post-processing step is not part of the algorithm and is excluded from runtime. 

\begin{table}[!htb]
\caption{Comparison of different methods on the sparse group Lasso bilevel hyperparameter selection problem with $n_{\text{tr}} = n_{\text{val}} =n_{\text{test}} = 200, m = 300$.}
\label{tab-num-2}
\centering
\setlength{\tabcolsep}{10pt} 
\renewcommand{\arraystretch}{1.2} 
\resizebox{1.0\textwidth}{!}{
\begin{tabular}{cccccc}
			\hline
			\textbf{Method} & \textbf{Time(s)} & \textbf{Val. Err.} & \textbf{Test. Err.} & \textbf{Test. Err. Infeas.} & \textbf{Feasibility} \\
			\hline                     
			Grid & 50.73(3.65) & 168.26(23.91) & 170.10(21.28) &  - & -\\
			Random & 79.71(4.94) & 185.87(27.24) & 193.93(26.39) &  - & -\\
			TPE & 135.66(7.94) & 171.42(21.93) & 182.19(25.32) &  - & -\\
			IGJO & 173.73(90.01) & 139.43(12.32) & 169.62(21.11) &  - & -\\
			VF-iDCA & 84.71(47.81) & 136.16(32.39) & 148.85(31.06) & 129.46(15.47) & 0.08(0.04)\\
			MEHA & 13.15(1.87) & 98.41(8.83) & 158.63(13.52) & 158.39(13.76) & 0.00(0.00) \\
			AGILS & 12.25(2.11) & 95.93(9.71) & 153.75(12.72) & 153.14(12.24) & 0.00(0.00)\\
			AGILS\_A & 12.82(2.34) & 96.06(10.53) & 155.25(12.48) & 155.39(12.05) & 0.00(0.00)\\
			AGILS\_R & 12.37(2.22) & 95.93(9.71) & 153.75(12.72) & 153.13(12.24) & 0.00(0.00)\\
			\hline
		\end{tabular}
	}
\end{table}

As shown in Table \ref{tab-num-2}, our proposed method, AGILS, outperforms the others in terms of validation error while requiring the least computation time. The validation error, corresponding to the value of the upper-level objective at a feasible point derived from the hyperparameter value of the iterates, can be seen as an indicator of optimization performance. AGILS also achieves competitive test errors. Notably, both variants, AGILS\_A and AGILS\_R, exhibit similar performance to AGILS in terms of both validation and test errors. Interestingly, although VF-iDCA finds a parameter $y$ that achieves the lowest test error in the infeasible setting, the corresponding constraint violation is large. This suggests that VF-iDCA may possess good generalization performance, even when feasibility is not strictly enforced. MEHA performs similarly to AGILS, but slightly less effective. Moreover, MEHA's performance heavily relies on proper selection of algorithmic parameters, and selecting inappropriate parameters can significantly degrade its effectiveness. The average inner-to-outer iteration ratios for AGILS and its variants are approximately 2. 

Next, we investigate the impact of different solvers for solving the proximal lower-level problem in AGILS. We compare three algorithms: the Proximal Gradient Method (PGM), the Fast Iterative Shrinkage-Thresholding Algorithm (FISTA), and the Alternating Direction Method of Multipliers (ADMM). Each experiment is repeated 20 times, with all other parameters of AGILS held consistent with those used above. The results, presented in Table \ref{tab-num-4} as the mean and the standard deviation over the 20 runs, show that the AGILS framework is robust to different solvers, supporting flexibility and adaptability of the AGILS framework.

\begin{table}[htb!]
\caption{Comparison of AGILS with different proximal lower-level problem solvers on the sparse group Lasso bilevel hyperparameter selection problem with $n_{\text{tr}} = n_{\text{val}} =n_{\text{test}} = 200, m = 300$.}
\label{tab-num-4}
\centering
\setlength{\tabcolsep}{10pt} 
\renewcommand{\arraystretch}{1.2}
\resizebox{1.0\textwidth}{!}{
	\begin{tabular}{cccccc}
				\hline
				\textbf{Method} & \textbf{Time(s)} & \textbf{Val. Err.} & \textbf{Test. Err.} & \textbf{Test. Err. Infeas.} & \textbf{Feasibility} \\
				\hline
				AGILS(PGM) & 12.25(2.11) & 95.93(9.71) & 153.75(12.72) & 153.14(12.24) & 0.00(0.00)\\
				AGILS(FISTA) & 11.85(2.04) & 95.93(9.71) & 153.75(12.72) & 153.14(12.24) & 0.00(0.00) \\
				AGILS(ADMM) & 12.13(2.06) & 95.60(10.11) & 154.27(12.30) & 154.58(12.11) & 0.00(0.00)  \\
				\hline
			\end{tabular}
		}
	\end{table}

	Finally, we test the performance of AGILS on larger-scale problems with varying dimensions. The experiments, each repeated five times, are reported in Table \ref{tab-num-3}. AGILS is terminated when  \( \lVert z^{k+1} - z^{k} \rVert/\sqrt{1 + \lVert z^{k}  \rVert^2} < 0.1/ m\) and \(t^{k+1}< 0.1\). All other parameters of AGILS remain consistent with the settings described above. The results demonstrate AGILS's ability to efficiently solve larger-scale problems.
	
	During all repeated runs of the sparse group Lasso experiments, the feasibility correction procedure, introduced to enforce feasibility, 
	was never triggered.
	\begin{table}[htb]
		\centering
		\caption{AGILS on large-scale sparse group Lasso bilevel hyperparameter selection problems.}
		\label{tab-num-3}
		\setlength{\tabcolsep}{10pt}
		\renewcommand{\arraystretch}{1.2} 
		\resizebox{1.0\textwidth}{!}{
			\begin{tabular}{cccccc}
						\hline
						\textbf{Dimension} & \textbf{Time(s)} & \textbf{Val. Err.} & \textbf{Test. Err.} & \textbf{Test. Err. Infeas.} & \textbf{Feasibility}\\
						\hline		
						
						\makecell{$ n_{\text{tr}} = 1000,\ n_{\text{val}} =1000$\\ \Xhline{0.3pt}$\ n_{\text{test}} = 1000,\ m = 1500$} & 24.48(3.21) & 112.94(5.85) & 116.72(9.49) & 114.23(9.05) & 0.00(0.00)\\
						\hline
						\makecell{$ n_{\text{tr}} = 3000,\ n_{\text{val}} =3000$\\ \Xhline{0.3pt}$\ n_{\text{test}} = 3000,\ m = 4500$} & 133.71(11.18), & 106.89(2.91) & 112.11(3.06) & 109.93(2.69) & 0.00(0.00)\\
						\hline
						\makecell{$ n_{\text{tr}} = 5000,\ n_{\text{val}} =5000$\\ \Xhline{0.3pt}$\ n_{\text{test}} = 5000,\ m = 7500$} &   445.44(42.99) & 107.94(2.45) & 106.64(1.50) & 105.28(1.51) & 0.00(0.00)\\
						\hline
						\makecell{$ n_{\text{tr}} = 7000,\ n_{\text{val}} =7000$\\ \Xhline{0.3pt}$\ n_{\text{test}} = 7000,\ m = 10500$} &  787.01(74.59) & 105.71(1.80) & 105.91(2.15) & 104.72(2.14) & 0.00(0.00) \\
						\hline
					\end{tabular}
				}
			\end{table}

\bibliographystyle{plain}
\bibliography{references}

@article{ye2021variational,
  title={Variational analysis perspective on linear convergence of some first order methods for nonsmooth convex optimization problems},
  author={Ye, Jane J and Yuan, Xiaoming and Zeng, Shangzhi and Zhang, Jin},
  journal={Set-Valued and Variational Analysis},
  volume={29},
  number={4},
  pages={803--837},
  year={2021},
  publisher={Springer}
}

@article{alcantara2025unified,
	title={Unified smoothing approach for best hyperparameter selection problem using a bilevel optimization strategy},
	author={Alcantara, Jan Harold and Nguyen, Chieu Thanh and Okuno, Takayuki and Takeda, Akiko and Chen, Jein-Shan},
	journal={Mathematical Programming},
	volume={212},
	number={1},
	pages={479--518},
	year={2025},
	publisher={Springer}
}

@article{allende2013solving,
	title={Solving bilevel programs with the {KKT}-approach},
	author={Allende, Gemayqzel Bouza and Still, Georg},
	journal={Mathematical Programming},
	volume={138},
	pages={309--332},
	year={2013},
	publisher={Springer}
}

@article{attouch2010proximal,
	title={Proximal alternating minimization and projection methods for nonconvex problems: {A}n approach based on the {K}urdyka-{{\L}}ojasiewicz inequality},
	author={Attouch, H{\'e}dy and Bolte, J{\'e}r{\^o}me and Redont, Patrick and Soubeyran, Antoine},
	journal={Mathematics of Operations Research},
	volume={35},
	number={2},
	pages={438--457},
	year={2010},
	publisher={INFORMS}
}

@article{attouch2013convergence,
	title={Convergence of descent methods for semi-algebraic and tame problems: {P}roximal algorithms, forward--backward splitting, and regularized {G}auss--{S}eidel methods},
	author={Attouch, Hedy and Bolte, J{\'e}r{\^o}me and Svaiter, Benar Fux},
	journal={Mathematical Programming},
	volume={137},
	number={1},
	pages={91--129},
	year={2013},
	publisher={Springer}
}

@article{bai2025optimality,
author = {Kuang Bai and Jane J. Ye and Shangzhi Zeng},
title = {Optimality conditions for bilevel programmes via {M}oreau envelope reformulation},
journal = {Optimization},
volume = {74},
number = {12},
pages = {2685--2719},
year = {2025},
publisher = {Taylor \& Francis}
}

@book{bard1998practical,
title={Practical bilevel optimization: {A}lgorithms and applications},
author={Bard, Jonathan F},
year={1998},
publisher={Springer},
address={New York}
}

@book{beck2017first,
title={First-order methods in optimization},
author={Beck, Amir},
year={2017},
publisher={SIAM},
address = {Philadelphia}
}

@book{bennett2008bilevel,
title={Bilevel optimization and machine learning},
author={Bennett, Kristin P and Kunapuli, Gautam and Hu, Jing and Pang, Jong-Shi},
year={2008},
publisher={Springer},
address={Berlin}
}

@inproceedings{bergstra2013making,
title={Making a science of model search: {H}yperparameter optimization in hundreds of dimensions for vision architectures},
author={Bergstra, James and Yamins, Daniel and Cox, David},
booktitle={International {C}onference on {M}achine {L}earning},
year={2013}
}

@inproceedings{bertrand2020implicit,
	title={Implicit differentiation of {L}asso-type models for hyperparameter optimization},
	author={Bertrand, Quentin and Klopfenstein, Quentin and Blondel, Mathieu and Vaiter, Samuel and Gramfort, Alexandre and Salmon, Joseph},
	booktitle={International {C}onference on {M}achine {L}earning},
	year={2020}
}

@article{bolte2007lojasiewicz,
	title={The {{\L}}ojasiewicz inequality for nonsmooth subanalytic functions with applications to subgradient dynamical systems},
	author={Bolte, J{\'e}r{\^o}me and Daniilidis, Aris and Lewis, Adrian},
	journal={SIAM Journal on Optimization},
	volume={17},
	number={4},
	pages={1205--1223},
	year={2007},
	publisher={SIAM}
}

@article{bolte2007clarke,
	title={Clarke subgradients of stratifiable functions},
	author={Bolte, J{\'e}r{\^o}me and Daniilidis, Aris and Lewis, Adrian and Shiota, Masahiro},
	journal={SIAM Journal on Optimization},
	volume={18},
	number={2},
	pages={556--572},
	year={2007},
	publisher={SIAM}
}

@article{bolte2010characterizations,
title={Characterizations of {{\L}}ojasiewicz inequalities: {S}ubgradient flows, talweg, convexity},
author={Bolte, J{\'e}r{\^o}me and Daniilidis, Aris and Ley, Olivier and Mazet, Laurent},
journal={Transactions of the American Mathematical Society},
volume={362},
number={6},
pages={3319--3363},
year={2010}
}

@article{bolte2014proximal,
	title={Proximal alternating linearized minimization for nonconvex and nonsmooth problems},
	author={Bolte, J{\'e}r{\^o}me and Sabach, Shoham and Teboulle, Marc},
	journal={Mathematical Programming},
	volume={146},
	number={1},
	pages={459--494},
	year={2014},
	publisher={Springer}
}

@book{bonnans2013perturbation,
title={Perturbation analysis of optimization problems},
author={Bonnans, J Fr{\'e}d{\'e}ric and Shapiro, Alexander},
year={2013},
publisher={Springer},
address={New York}
}

@article{cecchini2013solving,
title={Solving nonlinear principal-agent problems using bilevel programming},
author={Cecchini, Mark and Ecker, Joseph and Kupferschmid, Michael and Leitch, Robert},
journal={European Journal of Operational Research},
volume={230},
number={2},
pages={364--373},
year={2013},
publisher={Elsevier}
}

@inproceedings{chen2021closing,
title={Closing the gap: {T}ighter analysis of alternating stochastic gradient methods for bilevel problems},
author={Chen, Tianyi and Sun, Yuejiao and Yin, Wotao},
booktitle={Advances in {N}eural {I}nformation {P}rocessing Systems},
year={2021}
}

@article{colson2007overview,
title={An overview of bilevel optimization},
author={Colson, Beno{\^\i}t and Marcotte, Patrice and Savard, Gilles},
journal={Annals of Operations Research},
volume={153},
pages={235--256},
year={2007},
publisher={Springer}
}

@article{constantin1995optimizing,
title={Optimizing frequencies in a transit network: {A} nonlinear bi-level programming approach},
author={Constantin, Isabelle and Florian, Michael},
journal={International Transactions in Operational Research},
volume={2},
number={2},
pages={149--164},
year={1995},
publisher={Elsevier}
}

@book{dempe2002foundations,
title={Foundations of bilevel programming},
author={Dempe, Stephan},
year={2002},
publisher={Springer},
address={New York}
}

@book{dempe2020bilevel,
title={Bilevel optimization: {A}dvances and next challenges},
author={Dempe, Stephan},
year={2020},
publisher={Springer},
address={Cham}
}

@article{dempe2013bilevel,
title={The bilevel programming problem: {R}eformulations, constraint qualifications and optimality conditions},
author={Dempe, Stephan and Zemkoho, Alain B},
journal={Mathematical Programming},
volume={138},
pages={447--473},
year={2013},
publisher={Springer}
}

@article{drusvyatskiy2018error,
title={Error bounds, quadratic growth, and linear convergence of proximal methods},
author={Drusvyatskiy, Dmitriy and Lewis, Adrian S},
journal={Mathematics of Operations Research},
volume={43},
number={3},
pages={919--948},
year={2018},
publisher={INFORMS}
}

@book{facchinei2007finite,
title={Finite-dimensional variational inequalities and complementadrity problems},
author={Facchinei, Francisco and Pang, Jong-Shi},
year={2007},
publisher={Springer},
address={New York}
}

@article{feng2018gradient,
title={Gradient-based regularization parameter selection for problems with nonsmooth penalty functions},
author={Feng, Jean and Simon, Noah},
journal={Journal of Computational and Graphical Statistics},
volume={27},
number={2},
pages={426--435},
year={2018},
publisher={Taylor \& Francis}
}

@article{fischer2022semismooth,
title={Semismooth {N}ewton-type method for bilevel optimization: {G}lobal convergence and extensive numerical experiments},
author={Fischer, Andreas and Zemkoho, Alain B and Zhou, Shenglong},
journal={Optimization Methods \& Software},
volume={37},
number={5},
pages={1770--1804},
year={2022},
publisher={Taylor \& Francis}
}

@article{fliege2021gauss,
title={Gauss--{N}ewton-type methods for bilevel optimization},
author={Fliege, J{\"o}rg and Tin, Andrey and Zemkoho, Alain B},
journal={Computational Optimization and Applications},
volume={78},
number={3},
pages={793--824},
year={2021},
publisher={Springer}
}

@inproceedings{franceschi2018bilevel,
title={Bilevel programming for hyperparameter optimization and meta-learning},
author={Franceschi, Luca and Frasconi, Paolo and Salzo, Saverio and Grazzi, Riccardo and Pontil, Massimiliano},
booktitle={International {C}onference on {M}achine {L}earning},
year={2018}
}

@inproceedings{gao2022value,
title={Value function based difference-of-convex algorithm for bilevel hyperparameter selection problems},
author={Gao, Lucy L and Ye, Jane J and Yin, Haian and Zeng, Shangzhi and Zhang, Jin},
booktitle={International {C}onference on {M}achine {L}earning},
year={2022}
}

@Article{gao2024,
title={Moreau Envelope Based Difference-of-\\weakly-Convex Reformulation and Algorithm for Bilevel Programs}, 
author={Lucy L. Gao and Jane J. Ye and Haian Yin and Shangzhi Zeng and Jin Zhang},
year={2024},
journal={preprint, ar{X}iv:2306.16761}
}

@article{garen1994executive,
title={Executive compensation and principal-agent theory},
author={Garen, John E},
journal={Journal of Political Economy},
volume={102},
number={6},
pages={1175--1199},
year={1994},
publisher={The University of Chicago Press}
}

@inproceedings{grazzi2020iteration,
title={On the iteration complexity of hypergradient computation},
author={Grazzi, Riccardo and Franceschi, Luca and Pontil, Massimiliano and Salzo, Saverio},
booktitle={International {C}onference on {M}achine {L}earning},
year={2020}
}

@article{hong2023two,
title={A two-timescale stochastic algorithm framework for bilevel optimization: {C}omplexity analysis and application to actor-critic},
author={Hong, Mingyi and Wai, Hoi-To and Wang, Zhaoran and Yang, Zhuoran},
journal={SIAM Journal on Optimization},
volume={33},
number={1},
pages={147--180},
year={2023},
publisher={SIAM}
}

@inproceedings{ji2021bilevel,
title={Bilevel optimization: {C}onvergence analysis and enhanced design},
author={Ji, Kaiyi and Yang, Junjie and Liang, Yingbin},
booktitle={International {C}onference on {M}achine {L}earning},
year={2021}
}

@article{jolaoso2025fresh,
title={A fresh look at nonsmooth {L}evenberg--{M}arquardt methods with applications to bilevel optimization},
author={Jolaoso, Lateef O and Mehlitz, Patrick and Zemkoho, Alain B},
journal={Optimization},
volume = {74},
number = {12},
pages = {2745--2792},
year = {2025},
publisher={Taylor \& Francis}
}

@article{kunapuli2008classification,
title={Classification model selection via bilevel programming},
author={Kunapuli, Gautam and Bennett, Kristin P and Hu, Jing and Pang, Jong-Shi},
journal={Optimization Methods \& Software},
volume={23},
number={4},
pages={475--489},
year={2008},
publisher={Taylor \& Francis}
}

@inproceedings{kwon2023fully,
title={A fully first-order method for stochastic bilevel optimization},
author={Kwon, Jeongyeol and Kwon, Dohyun and Wright, Stephen and Nowak, Robert D},
booktitle={International {C}onference on {M}achine {L}earning},
year={2023}
}

@inproceedings{liu2022bome,
title={Bome! bilevel optimization made easy: {A} simple first-order approach},
author={Liu, Bo and Ye, Mao and Wright, Stephen and Stone, Peter and Liu, Qiang},
booktitle={Advances in {N}eural {I}nformation {P}rocessing Systems},
year={2022}
}

@article{liu2021investigating,
title={Investigating bi-level optimization for learning and vision from a unified perspective: {A} survey and beyond},
author={Liu, Risheng and Gao, Jiaxin and Zhang, Jin and Meng, Deyu and Lin, Zhouchen},
journal={IEEE Transactions on Pattern Analysis and Machine Intelligence},
volume={44},
number={12},
pages={10045--10067},
year={2021},
publisher={IEEE}
}

@inproceedings{liumoreau,
title={Moreau Envelope for Nonconvex Bi-Level Optimization: {A} Single-Loop and {H}essian-Free Solution Strategy},
author={Liu, Risheng and Liu, Zhu and Yao, Wei and Zeng, Shangzhi and Zhang, Jin},
booktitle={International {C}onference on {M}achine {L}earning},
year={2024}
}

@inproceedings{lu2023slm,
title={{SLM}: {A} smoothed first-order {L}agrangian method for structured constrained nonconvex optimization},
author={Lu, Songtao},
booktitle={Advances in {N}eural {I}nformation {P}rocessing Systems},
year={2023}
}

@article{lu2024first,
title={First-order penalty methods for bilevel optimization},
author={Lu, Zhaosong and Mei, Sanyou},
journal={SIAM Journal on Optimization},
volume={34},
number={2},
pages={1937--1969},
year={2024},
publisher={SIAM}
}

@book{luo1996mathematical,
title={Mathematical programs with equilibrium constraints},
author={Luo, Zhi-Quan and Pang, Jong-Shi and Ralph, Daniel},
year={1996},
publisher={Cambridge University Press},
address={Cambridge}
}

@article{migdalas1995bilevel,
title={Bilevel programming in traffic planning: {M}odels, methods and challenge},
author={Migdalas, Athanasios},
journal={Journal of Global Optimization},
volume={7},
pages={381--405},
year={1995},
publisher={Springer}
}

@article{29,
title={The theory of moral hazard and unobservable behaviour: {P}art {I}},
author={Mirrlees, James A},
journal={Review of Economic Studies},
volume={66},
number={1},
pages={3--21},
year={1999},
publisher={Wiley-Blackwell}
}

@article{mordukhovich2023globally,
title={A globally convergent proximal {N}ewton-type method in nonsmooth convex optimization},
author={Mordukhovich, Boris S and Yuan, Xiaoming and Zeng, Shangzhi and Zhang, Jin},
journal={Mathematical Programming},
volume={198},
number={1},
pages={899--936},
year={2023},
publisher={Springer}
}

@article{ngai2000approximate,
title={Approximate convex functions},
author={Ngai, Huynh V and Luc, Dinh T and Th{\'e}ra, M},
journal={Journal of Nonlinear and Convex Analysis},
volume={1},
number={2},
pages={155--176},
year={2000}
}

@article{nurminskii1973quasigradient,
title={The quasigradient method for the solving of the nonlinear programming problems},
author={Nurminskii, Evgeni Alekseevich},
journal={Cybernetics and Systems Analysis},
volume={9},
number={1},
pages={145--150},
year={1973},
publisher={Springer}
}

@article{okuno2021lp,
title={On $\ell_p$-hyperparameter learning via bilevel nonsmooth optimization},
author={Okuno, Takayuki and Takeda, Akiko and Kawana, Akihiro and Watanabe, Motokazu},
journal={Journal of Machine Learning Research},
volume={22},
number={245},
pages={1--47},
year={2021}
}

@article{outrata1990numerical,
title={On the numerical solution of a class of {S}tackelberg problems},
author={Outrata, Ji{\v{r}}{\'\i}},
journal={Zeitschrift f{\"u}r Operations Research},
volume={34},
pages={255--277},
year={1990},
publisher={Springer}
}

@book{outrata1998nonsmooth,
title={Nonsmooth approach to optimization problems with equilibrium constraints: {T}heory, applications and numerical results},
author={Outrata, Jiri and Kocvara, Michal and Zowe, Jochem},
year={1998},
publisher={Springer},
address={New York}
}

@inproceedings{shen2023penalty,
title={On penalty-based bilevel gradient descent method},
author={Shen, Han and Chen, Tianyi},
booktitle={International {C}onference on {M}achine {L}earning},
year={2023}
}

@article{simon2013sparse,
title={A sparse-group {L}asso},
author={Simon, Noah and Friedman, Jerome and Hastie, Trevor and Tibshirani, Robert},
journal={Journal of Computational and Graphical Statistics},
volume={22},
number={2},
pages={231--245},
year={2013},
publisher={Taylor \& Francis}
}

@book{von2010market,
title={Market structure and equilibrium},
author={Von Stackelberg, Heinrich},
year={2010},
publisher={Springer},
address={Berlin}
}

@article{wachter2006implementation,
	title={On the implementation of an interior-point filter line-search algorithm for large-scale nonlinear programming},
	author={W{\"a}chter, Andreas and Biegler, Lorenz T},
	journal={Mathematical programming},
	volume={106},
	number={1},
	pages={25--57},
	year={2006},
	publisher={Springer}
}

@article{wang2023calculus,
	title={Calculus Rules of the Generalized Concave {K}urdyka--{{\L}}ojasiewicz Property},
	author={Wang, Xianfu and Wang, Ziyuan},
	journal={Journal of Optimization Theory and Applications},
	volume={197},
	number={3},
	pages={839--854},
	year={2023},
	publisher={Springer}
}

@inproceedings{yao2024overcoming,
title={Overcoming Lower-Level Constraints in Bilevel Optimization: {A} Novel Approach with Regularized Gap Functions},
author={Yao, Wei and Yin, Haian and Zeng, Shangzhi and Zhang, Jin},
booktitle={International {C}onference on {L}earning {R}epresentations},
year={2025}
}

@inproceedings{yaoconstrained,
title={Constrained Bi-Level Optimization: {P}roximal {L}agrangian Value Function Approach and {H}essian-free Algorithm},
author={Yao, Wei and Yu, Chengming and Zeng, Shangzhi and Zhang, Jin},
booktitle={International {C}onference on {L}earning {R}epresentations},
year={2024}
}

@Article{ye2023,
title={Difference of convex algorithms for bilevel programs with applications in hyperparameter selection},
author={Ye, Jane J and Yuan, Xiaoming and Zeng, Shangzhi and Zhang, Jin},
journal={Mathematical Programming},
volume={198},
number={2},
pages={1583--1616},
year={2023},
publisher={Springer}
}

@article{ye1995optimality,
title={Optimality conditions for bilevel programming problems},
author={Ye, Jane J and Zhu, DL},
journal={Optimization},
volume={33},
number={1},
pages={9--27},
year={1995},
publisher={Taylor \& Francis}
}

@article{zhang2024introduction,
title={An Introduction to Bilevel Optimization: {F}oundations and applications in signal processing and machine learning},
author={Zhang, Yihua and Khanduri, Prashant and Tsaknakis, Ioannis and Yao, Yuguang and Hong, Mingyi and Liu, Sijia},
journal={IEEE Signal Processing Magazine},
volume={41},
number={1},
pages={38--59},
year={2024},
publisher={IEEE}
}

@book{mordukhovich2018variational,
	title={Variational analysis and applications},
	author={Mordukhovich, Boris Sholimovich},
	year={2018},
	publisher={Springer},
	address={Cham}
}
\end{document}